\documentclass{amsart}
\usepackage{amsmath,enumerate,amsfonts,picture,graphicx}

\newtheorem{lemma}{Lemma}[section]

\newtheorem{proposition}[lemma]{Proposition}
\newtheorem{theorem}[lemma]{Theorem}
\newtheorem{corollary}[lemma]{Corollary}

\newtheorem{example}[lemma]{Example}

\newcommand{\R}{\mathbb{R}}
\newcommand{\RP}{\mathbb{RP}}

\newcommand{\jj}{\mathtt{j}}
\newcommand{\ii}{\mathtt{i}}
\newcommand{\kk}{\mathtt{k}}
\newcommand{\lll}{\mathtt{l}}
\newcommand{\e}{\varepsilon}

\newcommand{\adim}{\dim_{\mathsf{AFF}}}
\newcommand{\ldim}{\dim_{\mathsf{LY}}}

\title[On equality of Hausdorff and affinity dimensions]{On equality of Hausdorff and affinity dimensions, via self-affine measures on positive subsystems}
\author{Ian D. Morris}

\address{Department of Mathematics, University of Surrey, Guildford GU2 7XH, United Kingdom.}

\urladdr{http://personal.maths.surrey.ac.uk/st/I.Morris}
\email{i.morris@surrey.ac.uk}

\author{Pablo Shmerkin}

\address{Departamento de Matem\'{a}ticas y Estad\'{\i}sticas and CONICET\\
Universidad Torcuato Di Tella\\
Av. Figueroa Alcorta 7350 (C1428BCW), Buenos Aires, Argentina.}

\urladdr{http://www.utdt.edu/profesores/pshmerkin}
\email{pshmerkin@utdt.edu}

\subjclass[2000]{28A80, 37C45 (primary) 37D35 (secondary)}

\thanks{Ian Morris was supported by the Engineering and Physical Sciences Research Council (grant number EP/L026953/1). Pablo Shmerkin was partially supported by project PICT 2013-1393 (ANPCyT). Ian Morris also wishes to thank Thomas Kempton for helpful conversations, and Universidad Torcuato Di Tella for its hospitality. Pablo Shmerkin also thanks Boris Solomyak for useful discussions}

\begin{document}

\begin{abstract}
Under mild conditions we show that the affinity dimension of a planar self-affine set is equal to the supremum of the  Lyapunov dimensions of self-affine measures supported on self-affine proper subsets of the original set. These self-affine subsets may be chosen so as to have stronger separation properties and in such a way that the linear parts of their affinities are positive matrices. Combining this result with some recent breakthroughs in the study of self-affine measures and their associated Furstenberg measures,  we obtain new criteria under which the Hausdorff dimension of a self-affine set equals its affinity dimension. For example, applying recent results of B\'{a}r\'{a}ny, Hochman-Solomyak and Rapaport, we provide many new explicit examples of self-affine sets whose Hausdorff dimension equals its affinity dimension, and for which the linear parts do not satisfy any domination assumptions.
\end{abstract}

\maketitle

\section{Introduction and statement of main results}

Although self-affine sets and measures have been investigated since the $1980s$, it is only very recently that a comprehensive theory of their dimensions has started to emerge, especially in the planar case. In this work we apply some of the recent progress on the understanding of self-affine \emph{measures} to obtain analogous statements for self-affine \emph{sets}.

Recall that given a tuple $T=(T_1,\ldots,T_m)$ of  invertible, strictly contractive affine maps on $\R^d$, there is a unique non-empty compact set $E=E_T\subset\R^d$ such that
\[
E = \bigcup_{i=1}^m T_i(E).
\]
The set $E$ is called the \emph{self-affine set} associated to $T$. If a probability vector $p=(p_1,\ldots,p_m)$ is also given, then there exists a unique Borel probability measure $\mu=\mu_{T,p}$, supported on $E$, such that
\[
\mu = \sum_{i=1}^m p_i\, T_i\mu,
\]
where $T_i\mu(B):=\mu(T_i^{-1}B)$ for every Borel set $B\subseteq\mathbb{R}^d$. The measure $\mu$ is called the \emph{self-affine measure} associated to $(T,p)$.

The key problem on self-affine sets and measures is to determine their fractal dimensions, such as Hausdorff and box-counting dimensions in the case of sets, and, failing this, at least to determine when different notions of dimension agree. In general this problem is far from solved: even in the plane, it is not known whether or not the upper and lower box-counting dimensions of a self-affine set must always coincide. However, in 1988 Falconer \cite{Fa88} introduced a quantity associated to the linear parts $A=(A_1,\ldots,A_M)$ of the $T_i$, nowadays usually called the \emph{affinity dimension} $\adim(A)$, which is always an upper bound for the upper box-counting dimension $\overline{\dim}_B(E)$, and such that when $\|A_i\|<1/2$ for all $i$, then for almost all choices of translation tuples $(v_1,\ldots,v_M)$, the self-affine set associated to $T_i(x)=A_ix+v_i$ has both Hausdorff and box-counting dimension equal to the affinity dimension. (In fact Falconer proved this with $1/3$ as the upper bound on the norms; it was subsequently shown by Solomyak \cite{So98} that $1/2$ suffices.)

The analog of affinity dimension for measures is the Lyapunov dimension, which we denote $\ldim(\mu,A)$; see Section \ref{se:preliminaries} below for its definition. Here $\mu$ is a measure on the code space
\[
 \Sigma_M = \{1,\ldots,M\}^\mathbb{N},
\]
invariant and ergodic under the left shift $\sigma$, and the measure of interest is the projection of $\mu$ via the \emph{coding map}
\[
 \pi_T((x_i)_{i=1}^\infty) = \lim_{n\to\infty} T_{x_1}\circ\cdots\circ T_{x_n}(0).
\]
When $\mu$ is a Bernoulli measure (by a Bernoulli measure we always mean a Bernoulli measure for the canonical Markov partition of the shift space in question), its $\pi_T$-projection is a self-affine measure.

The analog of Falconer's Theorem for the Lyapunov dimension of self-affine measures was established in \cite{JoPoSi07}. It always holds that $\ldim(\mu,A)\le \adim(A)$. Conversely, A. K\"{a}enm\"{a}ki \cite{Ka04} has shown that for any tuple $A=(A_1,\ldots,A_M)$ of contractive linear maps on $\R^d$, there always exists a (not necessarily unique) ergodic measure $\mu$ on $\Sigma_m$ for which $\ldim(\mu,A)=\adim(A)$. We refer to such measures $\mu$ as \emph{K\"aenm\"aki measures}.

An important problem since Falconer's Theorem has been to provide \emph{explicit} classes of self-affine sets for which the Hausdorff dimension (or at least the box-counting dimension) agrees with the affinity dimension. Hueter and Lalley \cite{HuLa97} exhibited an open class of planar self-affine sets for which the Hausdorff and affinity dimensions agree (and are less than $1$). Falconer \cite{Fa92} and K\"{a}enm\"{a}ki and Shmerkin \cite{KaSh09} provided classes of examples for which the box-counting dimension exists and equals the affinity dimension; in these examples the dimension is larger than $1$.

A complementary strand of research concerns studying the special case in which the affine maps are diagonal and have a special row or column alignment. In this ``carpet'' case Hausdorff and box-counting dimensions may disagree with each other and with the affinity dimension, but even in the diagonal case, the expectation is that generically all dimensions should still agree. Progress in this direction has recently been obtained in  \cite{BaRaSi15}.

Very recently, a new host of techniques have been introduced by several authors which allowed dramatic progress on this circle of problems, especially in the planar case. We make a brief summary here, deferring precise definitions and statements to Section \ref{se:applications}.  B\'{a}r\'{a}ny and K\"{a}enm\"{a}ki \cite{BaKa15} (see also \cite{Ba15, FaKe15a} for earlier special cases) showed that all self-affine measures in the plane are exact-dimensional and satisfy the Ledrappier-Young formula. These results, together with classical projection theorems, give many new examples of self-affine measures for which the dimension equals the Lyapunov dimension.  Using different techniques, A. Rapaport \cite{Ra15} gave a different set of conditions that guarantee the equality of Hausdorff and Lyapunov dimensions for self-affine measures. In a different direction, M. Hochman and B. Solomyak \cite{HoSo15} calculated the dimensions of the Furstenberg measures associated to finite sets of $SL_2(\mathbb{R})$-matrices under some mild assumptions; the dimension of Furstenberg measures plays a crucial r\^{o}le in all of the recent works \cite{Ba15, BaKa15, FaKe15a, Ra15}. Also very recently, Falconer and Kempton \cite{FaKe15b} investigated the dimension of \emph{projections} of self-affine measures, and in particular gave conditions under which the dimension of the self-affine measure is preserved under all, or all but one, orthogonal projections. All of these results share the common feature that they describe the dimensions of the measures induced by Bernoulli (or at best quasi-Bernoulli) measures only, and therefore do not in principle say anything about the dimensions of self-affine sets except in certain special cases. Several of them also have an assumption of positivity or domination of the linear maps involved.

One of the main goals of this work is to show that it is always possible to approximate affinity dimension by Lyapunov dimension of Bernoulli measures, at the price of passing to an iterate of the original system and deleting some of the maps in this iterate. Moreover, if the original system is irreducible, these Bernoulli measures can be chosen so that the affine maps corresponding to their support behave in a very regular way: they strictly preserve a cone, act strongly irreducibly if this was the case for the original system, and their Lyapunov exponents and entropy approximate those of the original K\"{a}enm\"{a}ki measure (which we will demonstrate is \emph{not} a Bernoulli measure). Recall that a matrix $A\in GL_2(\R)$ is \emph{hyperbolic} if it has two real eigenvalues which are not equal in modulus. Given $\mathtt{i}=(i_1,\ldots,i_n)$ we shall write $A_{\mathtt{i}}:=A_{i_n}\cdots A_{i_1}$.
\begin{theorem} \label{th:approx-by-Bernoulli}
Let $A_i\in GL_2(\R)$, $i=1,\ldots,M$. If $\adim(A)\in (0,2)$, the $A_i$ do not preserve a proper subspace, and one of the $A_i$ is hyperbolic, then for every $\e>0$ there exist $n\in\mathbb{N}$, a set $\Gamma\subset \{1,\ldots,M\}^n$, and a Bernoulli measure $\nu$ on $\Gamma^\mathbb{N}$ such that the following hold:
\begin{enumerate}
\item $\ldim(\mu,(A_\ii)_{\ii\in\Gamma}) \ge \adim(A)-\e$. Moreover, after normalizing by dividing by $n$, the Lyapunov exponents and measure-theoretical entropy of $\nu$ are each $\e$-close to those of the K\"{a}enm\"{a}ki measure.
\item The maps $\{ A_\ii:\ii\in\Gamma\}$ strictly preserve a cone,
\item If the $A_i$ are strongly irreducible (that is, they do not preserve a finite union of proper subspaces), then so are the $(A_\ii)_{\ii\in\Gamma}$.
\end{enumerate}
Moreover, if $T_i(x)=A_i x+v_i$ are such that $(T_1,\ldots,T_M)$ satisfies the strong open set condition, then $\Gamma$ can be chosen so that additionally $(T_\ii)_{\ii\in\Gamma}$ satisfies the strong separation condition.
\end{theorem}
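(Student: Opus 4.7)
The natural starting point is to fix a K\"aenm\"aki measure $\mu^*$ on $\Sigma_M$, which is ergodic and satisfies $\ldim(\mu^*,A)=\adim(A)$. Because $\mu^*$ is in general only quasi-Bernoulli, the plan is to carve out of it a Bernoulli approximation supported on an iterate of the shift, with the required geometric properties. Write $j_0$ for an index such that $A_{j_0}$ is hyperbolic, and let $v^u, v^s$ denote its expanding and contracting eigendirections in $\RP^1$.

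First, I would apply the Shannon--McMillan--Breiman theorem and Oseledec's theorem to $\mu^*$, combined with Egorov's theorem, to extract for each small $\delta>0$ and all sufficiently large $n$ a collection $\mathcal{C}_n$ of length-$n$ cylinders with $\mu^*(\bigcup\mathcal{C}_n)\to 1$, in which every $[\mathtt{w}]\in\mathcal{C}_n$ has $\mu^*$-measure in $[e^{-n(h(\mu^*)+\delta)},e^{-n(h(\mu^*)-\delta)}]$ and the singular values of $A_\mathtt{w}$ lie within a factor $e^{\pm n\delta}$ of $e^{-n\lambda_i(\mu^*)}$ for $i=1,2$. In particular $|\mathcal{C}_n|\approx e^{nh(\mu^*)}$.

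Next, I would fix a large integer $k$ (independent of $n$) and, for each $\mathtt{w}\in\mathcal{C}_n$, form the concatenated word $\mathtt{i}=j_0^{k}\mathtt{w}\,j_0^{k}$, so that $A_\mathtt{i}=A_{j_0}^{k}A_\mathtt{w} A_{j_0}^{k}$. For $k$ large the rightmost factor collapses every direction outside a small neighbourhood of $v^s$ into a neighbourhood of $v^u$ of radius $O(|\lambda_2(A_{j_0})/\lambda_1(A_{j_0})|^k)$; the middle factor sends that image somewhere in $\RP^1$; and the leftmost factor again collapses it near $v^u$, provided $A_\mathtt{w} v^u$ is bounded away from $v^s$. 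By excluding those $\mathtt{w}$ for which $A_\mathtt{w} v^u$ lies in a fixed small neighbourhood of $v^s$---an exclusion of small $\mu^*$-mass, since irreducibility forces the asymptotic top-singular-direction distribution (the Furstenberg measure of $\mu^*$) to be non-atomic---a small closed cone $C$ about $v^u$ is strictly preserved by every remaining $A_\mathtt{i}$, giving (2). Let $\Gamma$ be the resulting collection of length-$(n+2k)$ words and let $\nu$ be the uniform Bernoulli measure on $\Gamma^{\mathbb{N}}$. If the $A_i$ are strongly irreducible, a further negligible exclusion rules out preservation by $\{A_\mathtt{i}:\mathtt{i}\in\Gamma\}$ of any finite union of lines---there are only countably many candidates, and irreducibility together with the richness of the middle factor prevents each of them from being preserved by all long products---establishing (3).

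To finish (1), the normalised entropy $\log|\Gamma|/(n+2k)$ approaches $h(\mu^*)$ as $\delta\to 0$ and then $n\to\infty$ (with $k$ fixed), and the normalised Lyapunov exponents $(n+2k)^{-1}|\Gamma|^{-1}\sum_{\mathtt{i}\in\Gamma}\log\alpha_j(A_\mathtt{i})$ likewise approach $\lambda_j(\mu^*)$, because the $2k$-letter sandwich contributes only $O(k/n)$ while the middle word contributes what $\mathcal{C}_n$ prescribes. Since $\adim(A)\in(0,2)$, the Lyapunov dimension depends continuously on the triple $(h,\lambda_1,\lambda_2)$ near $(h(\mu^*),\lambda_1(\mu^*),\lambda_2(\mu^*))$, and the lower bound $\ldim(\nu,(A_\mathtt{i})_{\mathtt{i}\in\Gamma})\ge\adim(A)-\varepsilon$ follows. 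Finally, under SOSC with an open set $U$ meeting $E$ and $T_i(U)\subset U$ pairwise disjoint, the diameter of $T_\mathtt{i}(\overline{U})$ is at most $(\max_i\|A_i\|)^{n+2k}\operatorname{diam}\overline{U}$, so for $n$ large one has $T_\mathtt{i}(\overline{U})\subset T_{i_1}(U)$ strictly; this forces pairwise disjointness of the images $T_\mathtt{i}(K')$ of the new attractor $K'\subset\overline{U}$, yielding SSC. The principal obstacle is executing the three simultaneous reductions (for cones, for strong irreducibility, and for separation) while keeping the entropy and Lyapunov spectrum of $\nu$ within $\varepsilon$ of those of $\mu^*$; this requires carefully controlling the sandwich parameter $k$ relative to $n$ and ensuring the successive exclusions from $\mathcal{C}_n$ remain logarithmically negligible.
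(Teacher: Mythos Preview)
Your broad strategy---extract a large family of ``typical'' words for the K\"aenm\"aki measure $\mu^*$ via Shannon--McMillan--Breiman and Oseledets, then sandwich by powers of a fixed hyperbolic matrix to force cone preservation---is in the right spirit, and is a reasonable alternative to the paper's mechanism (which instead uses returns to a set where the Oseledets directions are localised). But several steps contain genuine gaps.

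The SSC argument is incorrect. You assert that because $T_\mathtt{i}(\overline{U})$ has small diameter one obtains $T_\mathtt{i}(\overline{U})\subset T_{i_1}(U)$. Small diameter gives no such containment: $T_\mathtt{i}(\overline{U})\subset T_{i_1}(\overline{U})$ always, but it may cling to the boundary no matter how small it is---indeed in Edgar's example every $T_\mathtt{i}(\overline{U})$ contains the common fixed point on $\partial U$. The OSC only separates the \emph{open} images, so closures need not be disjoint. The paper's fix is to use SOSC to produce one word $\mathtt{i}_1$ with $T_{\mathtt{i}_1}(\overline{U})\subset U$ and append it to every element of $\Gamma$; this is the missing idea.

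Second, you tacitly assume $\lambda_1(\mu^*)\neq\lambda_2(\mu^*)$, without which neither the two-line Oseledets splitting nor any ``Furstenberg measure of $\mu^*$'' is available. This is nontrivial: $\mu^*$ is not Bernoulli in general, so Furstenberg's theorem does not apply, and the paper devotes a separate section to proving it. Relatedly, your claim that irreducibility forces this Furstenberg measure to be non-atomic is false in the irreducible-but-not-strongly-irreducible (diagonal/anti\-diagonal) case, where it is supported on the two coordinate axes; there $v^u,v^s$ are precisely those axes and your exclusion $A_\mathtt{w}v^u\approx v^s$ removes about half the words, not a set of small mass (this is still logarithmically negligible, but your justification is wrong). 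Two further points you gloss over: the lower growth estimate requires $\|A_\mathtt{w}v^u\|\approx\alpha_1(A_\mathtt{w})$, which fails when $v^u$ is nearly orthogonal to the top singular direction of $A_\mathtt{w}$ and so needs another exclusion; and the strong-irreducibility step (``only countably many candidates'') is too vague---the paper instead adjoins a single explicit word so that $\Gamma$ contains two hyperbolic matrices with no common eigenspace.
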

In particular the affinity dimension of a tuple of matrices satisfying the above conditions is thus equal to the supremum of the Lyapunov dimensions of $\sigma^n$-invariant Bernoulli measures defined on $n$-cylinders, provided that we allow these Bernoulli measures to give zero probability to certain $n$-cylinders (specifically, to cylinders which do not correspond to elements of $\Gamma$). In \S\ref{ss:insufficient-measures} below we show that the same supremum over \emph{fully-supported} Bernoulli measures can be strictly less than the affinity dimension.

Theorem \ref{th:approx-by-Bernoulli} will follow from an analysis of the K\"{a}enm\"{a}ki measure carried out in Section \ref{se:Kaenmaki-measure}. The main technical result of the paper, Theorem \ref{th:monster}, which gives more detailed information about the subsystem, and holds in a more general context, is proved in Section \ref{se:regular-subsystem}; and a separate argument to find subsystems with strong separation carried out in Section \ref{se:SOSCtoSSC}, where the proof of Theorem \ref{th:approx-by-Bernoulli} is concluded. We hope these results will find applications beyond those given in this article.

As a consequence of Theorem \ref{th:approx-by-Bernoulli}, the recent results on self-affine measures have correlates for self-affine sets. We state some of these applications here, with further examples, discussion and proofs deferred to Section \ref{se:applications}.  We say that $A=(A_1,\ldots,A_M)\in GL_2(\R)^M$ has \emph{exponential separation} if there exists a constant  $c>0$ such that if $\ii,\jj\in \{1,\ldots,M\}^n$ are distinct finite sequences, then
\[
\| A_{i_n}\cdots A_{i_1}- A_{j_n}\cdots A_{j_1}\|>c^n.
\]
We note that exponential separation implies in particular that $A_1,\ldots,A_M$ freely generate a free subgroup of $GL_2(\R)$, and when all elements of all the $A_i$ are algebraic it is equivalent to the $A_i,\ldots,A_M$ freely generating a free subgroup, see \cite{HoSo15}. 

\begin{theorem} \label{th:Lyap32}
Let $(T_1,\ldots, T_M)$ be invertible affine contractions of the plane with $T_i(x)=A_i x+v_i$, and let $E$ be the corresponding self-affine set.

Suppose that the following conditions hold:
\begin{enumerate}
\item
The transformations $A_i$ are strongly irreducible and the semigroup they generate contains a hyperbolic matrix.
\item
The affinities $(T_1,\ldots,T_M)$ satisfy the strong open set condition.
\item
The maps $A_i$ have exponential separation.
\item $\adim(A_1,\ldots,A_M) \ge \frac{3}{2}$.
\end{enumerate}
Then $\dim_H E = \adim A$.
\end{theorem}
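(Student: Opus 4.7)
The overall plan is to use Theorem \ref{th:approx-by-Bernoulli} to reduce the computation of $\dim_H E$ to that of the Hausdorff dimension of a self-affine \emph{measure} supported on a well-behaved subsystem, and then appeal to the recent results of B\'ar\'any--K\"aenm\"aki and Hochman--Solomyak to identify that dimension with the Lyapunov dimension. The matching lower bound, together with the universal inequality $\dim_H E \leq \adim(A)$, will yield the theorem.

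Given $\e > 0$, I first apply Theorem \ref{th:approx-by-Bernoulli} to obtain $n \in \mathbb{N}$, a set $\Gamma \subset \{1,\ldots,M\}^n$, and a Bernoulli measure $\nu$ on $\Gamma^{\mathbb{N}}$ with $\ldim(\nu,(A_\ii)_{\ii\in\Gamma}) \geq \adim(A) - \e$ such that $\{A_\ii : \ii \in \Gamma\}$ strictly preserves a cone, is strongly irreducible, and $\{T_\ii : \ii \in \Gamma\}$ satisfies the strong separation condition. Although Theorem \ref{th:approx-by-Bernoulli} requires one of the $A_i$ themselves to be hyperbolic, hypothesis (1) only furnishes a hyperbolic element in the generated semigroup; this gap is closed by first passing to a sufficiently high iterate $(A_\ii)_{\ii \in \{1,\ldots,M\}^N}$, which changes neither $E$ nor $\adim(A)$. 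Exponential separation for the subsystem is then inherited directly: for distinct $\ii,\jj \in \Gamma^k \subset \{1,\ldots,M\}^{nk}$, the bound $\|A_\ii - A_\jj\| > c^{nk} = (c^n)^k$ is just a reinterpretation of the original separation.

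With these properties in place for the subsystem $(T_\ii)_{\ii\in\Gamma}$, the Hochman--Solomyak theorem yields that the associated Furstenberg measure $\eta$ on $\RP^1$ has dimension $1$; this step uses strong irreducibility, cone preservation (which supplies the necessary domination), and the inherited exponential separation. Plugging $\dim_H \eta = 1$ into the B\'ar\'any--K\"aenm\"aki Ledrappier--Young formula for the planar self-affine measure $\mu = \pi_T \nu$ then gives $\dim_H \mu = \ldim(\nu,(A_\ii)_{\ii\in\Gamma})$. Hypothesis (4), $\adim(A) \geq \tfrac{3}{2}$, is the threshold that makes this identification possible: for $\e$ small, $\ldim(\nu) \geq \tfrac{3}{2} - \e$ lies in the regime of the Ledrappier--Young formula where $\dim_H \eta = 1$ forces the self-affine measure's Hausdorff dimension to coincide with its Lyapunov dimension. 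Since $\mu$ is supported on the attractor of $(T_\ii)_{\ii\in\Gamma}$, which is contained in $E$, this gives $\dim_H E \geq \adim(A) - \e$. Letting $\e \to 0$ completes the proof.

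The principal obstacle will be aligning the precise hypotheses produced by Theorem \ref{th:approx-by-Bernoulli} with those required by the external inputs: in particular, the cone-preserving property must be translated into the positivity/domination condition used by Hochman--Solomyak, and one must verify, using the cone and the distinct Lyapunov exponents it implies, that $\nu$ lies strictly inside the relevant branch of the Ledrappier--Young formula and that the Furstenberg measure of $\nu$ has sufficient entropy for the Hochman--Solomyak bound to saturate at $1$ once the $\tfrac{3}{2}$ threshold is met.
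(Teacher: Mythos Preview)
Your overall architecture is right --- pass to an iterate, apply Theorem \ref{th:approx-by-Bernoulli}, inherit exponential separation, invoke Hochman--Solomyak for the Furstenberg measure, and then quote a dimension formula for the self-affine measure --- but the final step uses the wrong input, and the claim that ``the Hochman--Solomyak theorem yields that the associated Furstenberg measure $\eta$ has dimension $1$'' is false in general.

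Hochman--Solomyak (in the $GL_2^+$ form, Corollary \ref{co:Hochman-Solomyak}) gives $\dim\eta_\nu = \min\bigl(1, h(\nu)/(\lambda_1(\nu)-\lambda_2(\nu))\bigr)$, and nothing in the hypothesis $\adim(A)\geq \tfrac32$ forces this ratio to reach $1$. Concretely, for the K\"aenm\"aki measure with $s=\adim(A)\in[1,2)$ one has $h(\mu)=-\lambda_1(\mu)-(s-1)\lambda_2(\mu)$; if, say, $\lambda_1(\mu)/\lambda_2(\mu)$ is small (very unequal exponents), then $h(\mu)/(\lambda_1(\mu)-\lambda_2(\mu))<1$ even when $s=\tfrac32$. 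In that regime B\'ar\'any's Theorem \ref{th:Barany} is unavailable, since for $\ldim(\nu)>1$ it requires $\dim_H\eta_\nu\geq 1$. What the $\tfrac32$ threshold \emph{does} guarantee is the weaker inequality
\[
\ldim(\nu)+\dim\eta_\nu>2,
\]
and that is exactly the hypothesis of Rapaport's Theorem \ref{th:Rapaport}, which is the input the paper uses (via Theorems \ref{th:application-Rapaport} and \ref{th:Lyap32-generalized}). The paper's computation shows that $s+\dim_S\eta_\mu\leq 2$ would force $\lambda_1(\mu)/\lambda_2(\mu)\leq (3-2s)/(3-s)$, which is $\leq 0$ once $s\geq\tfrac32$, a contradiction; so $s+\dim_S\eta_\mu>2$, and after the approximation by $\nu$ the same strict inequality persists. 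Replacing your appeal to B\'ar\'any/Ledrappier--Young with Rapaport's theorem, together with this inequality, closes the gap.
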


We make some remarks on these conditions. The first assumption is very mild, and is standard in the theory of random matrix products; in this case each Bernoulli measure on $\Sigma_M$ has separated Lyapunov exponents and induces a uniquely defined Furstenberg measure. When this assumption does not hold, then $A$ has one of the following special forms (up to a change of basis):
\begin{itemize}
 \item All the $A_i$ are similarities, i.e. we are in the much better understood self-similar case.
 \item All the $A_i$ are upper triangular. This case further splits into the cases in which all the matrices are parabolic matrices or similarities (which behaves in some aspects as in the self-similar case) and the case in which at least one matrix is hyperbolic. The Hausdorff dimension of the self-affine set in this latter situation was investigated by Bara{\'n}ski \cite{Ba08}, B\'{a}r\'{a}ny \cite[Theorems 4.8 and 4.9]{Ba15}, and  B\'{a}r\'{a}ny, Rams and Simon \cite{BaRaSi16}.
 \item All the $A_i$ are either diagonal or anti-diagonal, with both cases occurring. The box-counting dimension of this class of self-affine carpets was investigated by Fraser \cite{Fr12}. We investigate their Hausdorff dimensions in Section \ref{se:irreducible-case}.
\end{itemize}

The open set condition is perhaps better known than the strong open set condition, and indeed the two are known to be equivalent in the self-similar context. However, Edgar \cite[Example 1]{Ed92} has constructed an affine iterated function system, of affinity dimension larger than $1$ and satisfying the open set condition, whose attractor is a single point. In Section \ref{se:SOSCtoSSC} we adapt Edgar's construction to show that Theorem \ref{th:Lyap32} fails if one assumes the open set condition instead of the strong open set condition. The inequivalence of the open set and strong open set conditions in the self-affine context can be seen already as a feature of Edgar's example, although to the best of our knowledge this has not previously been explicitly remarked. Our results suggest to us that for affine iterated function systems it is the strong open set condition and not the open set condition which is the most natural and appropriate separation hypothesis.

The exponential separation condition arises from the work of Hochman and Solomyak \cite{HoSo15}. It is plausible that it is a generic condition among tuples of matrices in $SL_2(\R)$, but this is not currently known. On the other hand, we note that if this condition holds for  $(A_1,\ldots,A_M)$, then it also holds for $(r_1 A_1,\ldots,r_M A_M)$ for any scalars $r_i\neq 0$ (see the proof of Corollary \ref{co:Hochman-Solomyak}). Also, when the matrices $A_i$ have algebraic coefficients, it holds if and only if the $A_i$ freely generate a subgroup of $SL_2(\R)$, see \cite[Lemma 6.1]{HoSo15}. Unfortunately, the freeness of matrix semigroups is in general very difficult to check: for three-dimensional non-negative integer matrices, the problem of determining freeness is known to be computationally undecidable \cite{KlBiSa91}. A particularly vivid example of the difficulty of the two-dimensional problem may be found in \cite{CaHaKa99,GaGuKi10}. Nevertheless, one can construct many examples of free semigroups of $SL_2(\R)$ with algebraic coefficients: see  \S\ref{ss:examples} below.

In the final condition, the value $3/2$ is likely an artifact of the proof. Affinity dimension is in general difficult to compute, but the condition can still be easily checked in many cases. For example, it is satisfied if
\[
\sum_{i=1}^M |\det A_i|^{\frac{3}{4}}\ge 1 .
\]

We remark that when the affinity dimension equals $2$ and the open set condition holds, then the self-affine set automatically has positive Lebesgue measure, while the open set condition cannot hold if the affinity dimension exceeds $2$. See Lemma \ref{le:affin-ge-2-OSC} for these standard facts.

The next application weakens the analogous conditions given by Hueter and Lalley \cite{HuLa97} and B\'{a}r\'{a}ny \cite{Ba15} for the equality of Hausdorff and affinity dimension. In particular, we do not require domination.
\begin{theorem} \label{th:HueterLalley}
Let $(T_1,\ldots, T_M)$ be invertible affine contractions, with $T_i(x)=A_i x+v_i$, and let $E$ be the corresponding self-affine set.

Suppose that the following conditions hold:
\begin{enumerate}
\item
The transformations $A_i$ are strongly irreducible and the semigroup they generate contains a hyperbolic matrix.
\item
The affinities $(T_1,\ldots,T_M)$ satisfy the strong open set condition.
\item
The maps $A_i$ have exponential separation.
\item The matrices $A_i$ satisfy the bunching condition $\alpha_1(A_i)^2 \le \alpha_2(A_i)$ for all $i$.
\end{enumerate}
Then $\dim_H E = \adim A$.
\end{theorem}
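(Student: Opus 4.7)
The plan is to deduce Theorem \ref{th:HueterLalley} from Theorem \ref{th:approx-by-Bernoulli} combined with the recent theorems on planar self-affine \emph{measures} that equate Hausdorff and Lyapunov dimensions under a bunching hypothesis (B\'arany's \cite{Ba15} Ledrappier--Young formula and exact dimensionality, together with Hochman--Solomyak's \cite{HoSo15} formula for the dimension of the Furstenberg measure). The strategy is: first pass to a well-behaved subsystem via Theorem \ref{th:approx-by-Bernoulli}; then verify that every hypothesis needed by the measure-theoretic theorems transfers to that subsystem; and finally apply those theorems to a Bernoulli measure whose Lyapunov dimension is arbitrarily close to $\adim A$.

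The case $\adim A=2$ can be handled separately: under SOSC this forces $E$ to have positive Lebesgue measure (Lemma \ref{le:affin-ge-2-OSC}), so $\dim_H E=2=\adim A$. Assume henceforth $\adim A\in(0,2)$. Conditions (1) and (2), after replacing the system by an iterate if necessary so that a hyperbolic element appears among the generators, supply the hypotheses of Theorem \ref{th:approx-by-Bernoulli}. For each $\e>0$ this produces $n\in\mathbb{N}$, $\Gamma\subset\{1,\ldots,M\}^n$ and a Bernoulli measure $\nu$ on $\Gamma^{\mathbb{N}}$ such that $\ldim(\nu,(A_\ii)_{\ii\in\Gamma})\ge \adim A-\e$, the family $\{A_\ii:\ii\in\Gamma\}$ is strongly irreducible and strictly preserves a cone, and the affinities $\{T_\ii:\ii\in\Gamma\}$ satisfy the strong separation condition. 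Two remaining hypotheses of Theorem \ref{th:HueterLalley} are inherited by the subsystem. Exponential separation passes to $(A_\ii)_{\ii\in\Gamma}$ with constant $c^n$, since distinct elements of $\Gamma^k$ are distinct in $\{1,\ldots,M\}^{nk}$. The bunching condition (4) is preserved under composition, because submultiplicativity of $\alpha_1$ and supermultiplicativity of $\alpha_2$ yield
\[
\alpha_1(AB)^2 \le \alpha_1(A)^2\alpha_1(B)^2 \le \alpha_2(A)\alpha_2(B) \le \alpha_2(AB),
\]
so by induction $\alpha_1(A_\ii)^2\le\alpha_2(A_\ii)$ for every $\ii\in\Gamma$.

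At this point the subsystem, together with the Bernoulli measure $\nu$, satisfies every hypothesis of the planar self-affine measure theorems in the bunched regime: strong separation, strict cone preservation (and hence domination), strong irreducibility, the bunching inequality $\alpha_1^2\le\alpha_2$, and exponential separation. Combining the Ledrappier--Young formula of \cite{Ba15} with the Hochman--Solomyak dimension formula for the Furstenberg measure, one concludes $\dim_H \pi_{T_\Gamma}\nu = \ldim(\nu,(A_\ii)_{\ii\in\Gamma}) \ge \adim A-\e$. Since $\pi_{T_\Gamma}\nu$ is supported on the attractor of the subsystem, which lies in $E$, this gives $\dim_H E\ge \adim A-\e$; letting $\e\to 0$ and combining with the standard upper bound $\dim_H E\le \adim A$ finishes the argument. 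The main obstacle is precisely this last step: one must check that the combination of hypotheses delivered by Theorem \ref{th:approx-by-Bernoulli}, together with the inherited bunching and exponential separation, matches the hypotheses of the measure-theoretic results exactly. The role of bunching is to force the conditional dimensions in the Ledrappier--Young formula to saturate, so that full Furstenberg dimension (guaranteed by exponential separation and \cite{HoSo15}) translates directly into full Lyapunov dimension for $\pi_{T_\Gamma}\nu$; in the absence of bunching one would instead need the quantitative hypothesis $\adim A\ge 3/2$ of Theorem \ref{th:Lyap32}.
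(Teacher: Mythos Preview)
Your argument is correct and follows essentially the same architecture as the paper's proof: reduce to the case $\adim A\in(0,2)$, pass to an iterate so that a hyperbolic matrix appears among the generators, apply Theorem~\ref{th:approx-by-Bernoulli} to obtain a strongly irreducible, cone-preserving, strongly separated subsystem with a Bernoulli measure $\nu$ of Lyapunov dimension close to $\adim A$, invoke Hochman--Solomyak (in the form of Corollary~\ref{co:Hochman-Solomyak}) to compute $\dim\eta_\nu$, and then apply B\'ar\'any's Theorem~\ref{th:Barany}.

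The one genuine difference is in how you verify the hypothesis $\dim_H\eta_\nu\ge\min(1,\ldim\nu)$ of Theorem~\ref{th:Barany}. You observe (correctly) that the bunching inequality $\alpha_1^2\le\alpha_2$ is preserved under products by sub/supermultiplicativity of the singular values, so it holds for every $A_\ii$, $\ii\in\Gamma$; this immediately gives $2\lambda_1(\nu)\le\lambda_2(\nu)$, hence $\lambda_1(\nu)-\lambda_2(\nu)\le -\lambda_1(\nu)$ and therefore $h(\nu)/(\lambda_1(\nu)-\lambda_2(\nu))\ge h(\nu)/(-\lambda_1(\nu))\ge\ldim\nu$, which combined with Corollary~\ref{co:Hochman-Solomyak} yields the needed inequality. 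The paper instead first proves the \emph{strict} inequality $2\lambda_1(\mu)<\lambda_2(\mu)$ for the K\"aenm\"aki measure $\mu$ via Corollary~\ref{co:bunching-to-Lyap-exponents} (which in turn rests on a structural lemma about semigroups with $\alpha_1=\alpha_2^t$ identically), and then transfers this to $\nu$ using the approximation of entropy and Lyapunov exponents in Theorem~\ref{th:approx-by-Bernoulli}(1). Your route is more elementary and avoids Corollary~\ref{co:bunching-to-Lyap-exponents} entirely; the paper's route, on the other hand, illustrates a technique that generalises to situations where the pointwise bunching need not pass to products (as in Theorem~\ref{th:Lyap32-generalized}). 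One small point: your final paragraph leaves the verification of $\dim_H\eta_\nu\ge\min(1,\ldim\nu)$ implicit --- it would be worth spelling out the short chain of inequalities above, since that is precisely where bunching enters.
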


Note that the first three conditions are the same as in Theorem \ref{th:Lyap32}. The r\^{o}le of the bunching condition (together with the other assumptions) is to ensure that either the dimension of the Furstenberg measure is $1$, or it is larger than the affinity dimension. This allows the application of Theorem \ref{th:Barany}. We remark that the separation hypothesis of Hueter and Lalley in \cite{HuLa97} can be easily seen to imply condition (3) above, since under that hypothesis the images of the negative diagonal line in $\mathbb{R}^2$ under two distinct products $A_{i_n}\cdots A_{i_1}$, $A_{j_n}\cdots A_{j_1}$ must be exponentially separated.

We conclude this introduction by putting our results in a wider context. According to a folklore conjecture in the field, equality of Hausdorff and affinity dimensions should occur for an open and dense family of affine iterated function systems, at least under suitable separation assumptions. Several of the results described above support an even stronger version of the conjecture: for an open and dense set of tuples $(A_1,\ldots,A_M)$ of strictly contractive linear bijections of $\mathbb{R}^2$, and for \emph{every} choice of translations $v_1,\ldots,v_M$ such that $T_i(x)=A_i x+v_i$ satisfies the strong open set condition, the Hausdorff dimension of the invariant set equals the affinity dimension. (We speculate that this may even be true whenever $A_1,\ldots,A_M$ generate a Zariski dense subgroup of $GL_2(\mathbb{R})$.) Our results provide additional evidence for this conjecture by showing for the first time that there are tuples $(A_1,\ldots,A_M)$ verifying the conjecture which do not satisfy domination (we recall that lack of domination holds in non-empty open subsets of parameter space). Moreover, it follows from Theorems \ref{th:Lyap32} and \ref{th:HueterLalley} that such tuples $(A_1,\ldots,A_M)$ are in fact dense in large open subsets of parameter space: firstly, in the set of all tuples of affinity dimension strictly greater than $3/2$ (which is open since affinity dimension is continuous, see \cite{FeSh14}); and secondly, in the set of all tuples satisfying the bunching condition of Theorem \ref{th:HueterLalley}. We direct the reader to \S\ref{ss:examples} below for some additional discussion including concrete examples.

\section{Preliminaries}
\label{se:preliminaries}

In this section we review some of the main concepts and results in the theory of self-affine sets, and set up notation along the way. We restrict ourselves to the planar case, and refer to \cite{Ka04} for details and proofs. We recall that $GL_2(\mathbb{R})$, $GL_2^+(\mathbb{R})$ and $SL_2(\mathbb{R})$  denote the sets of $2\times 2$ real matrices whose determinant is respectively nonzero, positive, or equal to $1$. A set or tuple of elements of $GL_2(\mathbb{R})$ will be called \emph{irreducible} if its members do not preserve a common invariant one-dimensional subspace, and \emph{strongly irreducible} if they do not commonly preserve a finite union of one-dimensional subspaces. Throughout this article $\|\cdot\|$ denotes the Euclidean metric on $\mathbb{R}^2$ or the operator norm on $GL_2(\mathbb{R})$ derived therefrom, the distinction between the two being obvious from context.

Given a matrix $A\in GL_2(\R)$, its \emph{singular values} $\alpha_1(A)\ge \alpha_2(A)$ are the positive square roots of the eigenvalues of the positive definite matrix $A^*A$. In particular $|\det A|\equiv \alpha_1(A)\alpha_2(A)$, $\alpha_1(A)\equiv \|A\|$ and $\alpha_2(A)\equiv \|A^{-1}\|^{-1}$.

For $s\ge 0$, the \emph{singular value function} (SVF) $\varphi^s:GL_2(\R)\to\R$ is defined as
\[
\varphi^s(A) = \left\{
\begin{array}{ll}
  \alpha_1(A)^s & \text{ if } 0\le s < 1 \\
  \alpha_1(A) \alpha_2(A)^{s-1} & \text{ if } 1\le s <2 \\
  |\det(A)|^{s/2} & \text{ if } 2\le s
\end{array}
 \right..
\]
The singular value function is well-known to satisfy the submultiplicativity property $\varphi^s(AB)\le \varphi^s(A)\varphi^s(B)$ for every $A,B\in GL_2(\mathbb{R})$. Given a tuple $A=(A_1,\ldots,A_M)\in GL_2(\R)^M$, the associated topological pressure is defined as
\[
P(\varphi^s,A) = \lim_{n\to\infty} \frac{1}{n} \log\left( \sum_{\ii\in \{1,\ldots,M\}^n} \varphi^s(A_{\ii_1}\cdots A_{\ii_n}) \right),
\]
where the limit exists by sub-multiplicativity of $\varphi^s$. The pressure function $s\mapsto P(\varphi^s,A)$ is convex and continuous. If additionally every $A_i$ has norm strictly less than one, then $s\mapsto P(\varphi^s,A)$ is strictly decreasing and there exists a unique $s\ge 0$ for which $P(\varphi^s,A)=0$: in this case the \emph{affinity dimension} $\adim(A)$ of $A$ is defined to be this unique number $s$.

Given $\mathtt{i}=(i_1,\ldots,i_n)\in\{1,\ldots,M\}^n$ and $\mathtt{j}=(j_1,\ldots,j_m)\in\{1,\ldots,M\}^m$ we let $\mathtt{ij}$ denote their concatenation $(i_1,\ldots,i_n,j_1,\ldots,j_m)$. Given $\mathtt{i}=(i_1,\ldots,i_n)$ and $A=(A_1,\ldots,A_M)$  we will also find it convenient to write $|\mathtt{i}|=n$ and $A_{\mathtt{i}}:=A_{i_n}\cdots A_{i_1}$.

Given $A=(A_1,\ldots,A_M)\in GL_2(\mathbb{R})$, we define
\[A(x,n):=A_{x_n}\cdots A_{x_1}\]
for every $x \in \Sigma_M$ and $n \geq 1$, noting that this definition implies the  \emph{cocycle identity} $A(x,n_1+n_2)=A(\sigma^{n_1}x,n_2)A(x,n_1)$ for every $x \in \Sigma_M$ and $n_1,n_2\geq 1$.  For every $\sigma$-invariant measure $\mu$ on $\Sigma_M$ we define the \emph{Lyapunov exponents} of $A$ with respect to $\mu$ to be the quantities
\[\lambda_1(\mu):=\lim_{n\to\infty}\frac{1}{n}\int \log\alpha_1(A(x,n))\,d\mu(x)=\inf_{n\geq 1}\frac{1}{n}\int \log\alpha_1(A(x,n))\,d\mu(x), \]
\[\lambda_2(\mu):=\lim_{n\to\infty}\frac{1}{n}\int \log\alpha_2(A(x,n))\,d\mu(x)=\sup_{n\geq 1}\frac{1}{n}\int \log\alpha_1(A(x,n))\,d\mu(x), \]
where the limit defining $\lambda_1(\mu)$ (resp. $\lambda_2(\mu)$) exists by subadditivity (resp. superadditivity).  Combining these definitions with that of $\varphi^s$ it follows easily that
\[\lim_{n\to\infty}\frac{1}{n}\int \log\varphi^s(A(x,n))\,d\mu(x)=
\left\{
\begin{array}{ll}
  s\lambda_1(\mu) & \text{ if } 0\le s \le 1 \\
  \lambda_1(\mu)+ (s-1)\lambda_2(\mu)& \text{ if } 1\le s \le 2 \\
  \frac{s}{2}\left(\lambda_1(\mu)+\lambda_2(\mu)\right) & \text{ if } 2\le s
\end{array}
 \right..
\]
By the subadditive variational principle (see \cite{CaFeHu08}) we have
\[
P(\varphi^s,A) =\sup_\mu \left( h(\mu) + \inf_{n \geq 1} \frac{1}{n}\int \log\varphi^s(A(x,n)) \,d\mu(x)\right)
\]
where the supremum is taken over all $\sigma$-invariant Borel probability measures $\mu$ on $\Sigma_M$, and $h(\mu)$ denotes metric entropy. Measures
which attain this supremum are called \emph{equilibrium states} for $\varphi^s$, and for every $A$ and $s$ at least one equilibrium state exists. In the case $s=\adim(A)$ we also call these equilibrium states \emph{K\"{a}enm\"{a}ki measures}. The \emph{Lyapunov dimension} $\ldim$ of $\mu$ is defined as
\[
\ldim(\mu,A)  = \left\{
\begin{array}{ll}
  \frac{h(\mu)}{-\lambda_1(\mu)} & \text{ if } h(\mu) < -\lambda_1(\mu) \\
  1 + \frac{h(\mu)+\lambda_1(\mu)}{-\lambda_2(\mu)} & \text{ if } -\lambda_1(\mu) \le h(\mu) < -\lambda_2(\mu) \\
  2\frac{h(\mu)}{-\lambda_1(\mu)-\lambda_2(\mu)} & \text{ if } -\lambda_2(\mu) \le h(\mu) \end{array}
 \right..
\]
Then $\ldim(\mu,A) \le \adim(A)$, with equality if and only if $\mu$ is  K\"{a}enm\"{a}ki measure. We sometimes write $\ldim(\mu)$ instead of $\ldim(\mu,A)$ when the tuple $A$ is clear from context.

\section{Properties of equilibrium states for the Singular Value Function}
\label{se:Kaenmaki-measure}
\subsection{Principal results}
Perhaps surprisingly, the existing literature contains relatively few facts about the ergodic properties of equilibrium states for $\varphi^s$. In this section we prove the following theorem on the equilibrium states of $\varphi^s$ in two dimensions:
\begin{theorem} \label{th:properties-of-mu}
Let $A_1,\ldots,A_M \in GL_2(\mathbb{R})$. Suppose that the matrices $A_1,\ldots,A_M$ do not have a common one-dimensional invariant subspace, and that at least one of them is hyperbolic. Let $0<s<2$, and let $\mu$ be a Borel probability measure on $\Sigma_M$ which is an equilibrium state for $\varphi^s$. Then $\mu$ is 
globally supported on $\Sigma_M$, and the Lyapunov exponents $\lambda_1(\mu)$ and $\lambda_2(\mu)$ are unequal.
\end{theorem}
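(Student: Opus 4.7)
The plan is to treat the two assertions separately, establishing full support first and then using it as an input to rule out equal Lyapunov exponents. Throughout, I will work with the abbreviation $P := P(\varphi^s,A)$ and the partition functions $Z_n:=\sum_{|\ii|=n}\varphi^s(A_\ii)$.

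For the global support claim, my strategy is to derive a two-sided quasi-Bernoulli (Gibbs-type) inequality
\[
 C_1\,\varphi^s(A_\ii)\,e^{-|\ii|P} \;\le\; \mu([\ii]) \;\le\; C_2\,\varphi^s(A_\ii)\,e^{-|\ii|P}
\]
for every finite word $\ii$, which combined with $\varphi^s(A_\ii)>0$ immediately yields $\mu([\ii])>0$. The upper bound is routine: submultiplicativity of $\varphi^s$ gives $Z_{n+m}\le Z_n Z_m$, hence $Z_n\le e^{nP}$, and a comparison of $\mu$ with atomic test measures on shifts of $[\ii]$ via the variational principle produces the stated estimate. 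The lower bound is where the irreducibility hypothesis enters. The technical heart is a \emph{connecting-word lemma}: there exist $c_0>0$ and $N\in\mathbb N$ such that for any finite words $\ii,\jj$ one can find $\kk$ with $|\kk|\le N$ and
\[
\varphi^s(A_\jj A_\kk A_\ii)\;\ge\; c_0\,\varphi^s(A_\ii)\,\varphi^s(A_\jj).
\]
In two dimensions the loss of multiplicativity of $\varphi^s$ under concatenation is controlled by the angle between the top singular direction of $A_\ii$ and the most-contracted direction of $A_\jj$; irreducibility guarantees that the forward orbit on $\RP^1$ under the semigroup generated by the $A_i$ visits every non-empty open set within a uniformly bounded number of steps, so a suitable $\kk$ of bounded length can always be inserted. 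The lemma promotes the submultiplicativity $Z_{n+m}\le Z_n Z_m$ to a matching lower bound $Z_{n+m+\ell}\ge c_0 Z_n Z_m$ for some $\ell\le N$, from which the lower Gibbs inequality for $\mu$ follows by a standard argument, giving full support.

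For the Lyapunov exponent gap, I first reduce to ergodic $\mu$: since $\mu\mapsto \lambda_1(\mu),\lambda_2(\mu),h(\mu)$ are all affine on the simplex of $\sigma$-invariant measures and $\mu$ is an equilibrium state, almost every ergodic component is still an equilibrium state, and $\lambda_1\ge \lambda_2$ pointwise forces at least one component to also satisfy $\lambda_1=\lambda_2$. I therefore assume $\mu$ ergodic and, for contradiction, $\lambda_1(\mu)=\lambda_2(\mu)=\lambda$. The degenerate Oseledets theorem then produces a $\mu$-measurable family of inner products $\langle\cdot,\cdot\rangle_x$ on $\R^2$ with respect to which each $A_{x_1}:(\R^2,\langle\cdot,\cdot\rangle_x)\to(\R^2,\langle\cdot,\cdot\rangle_{\sigma x})$ is a similarity. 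Writing this measurable conformal structure as a section $x\mapsto E_x$ of the bundle of ellipses in $\R^2$, each $A_i$ takes $E_x$ to a scalar multiple of $E_{\sigma x}$ for $\mu$-a.e.\ $x\in [i]$. By Part 1 every $[i]$ carries positive mass; in particular the symbol $i_0$ corresponding to the hyperbolic matrix does. A pigeonhole/Poincar\'e-recurrence argument (or direct use of Lusin regularity of $x\mapsto E_x$) then locates a positive-measure set of $x$ where the ellipses $E_x$ and $E_{\sigma x}$ are close to a common shape $E$, and on which $A_{i_0}$ must send $E$ to a scalar multiple of itself. This forces $A_{i_0}$ to lie in the similarity group $\R_+\cdot O(E)$, contradicting the fact that a hyperbolic element of $GL_2(\R)$ has two distinct real singular values and therefore cannot be a similarity in any inner product.

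The main obstacle, as I see it, is the connecting-word lemma in Part 1: one must control $\varphi^s$ of products in the regime $1\le s<2$, where both singular values play a role, and extract a uniform (in the words $\ii,\jj$) bound from irreducibility. The second difficulty, in Part 2, is extracting a clean contradiction from the $\mu$-measurable (not continuous) conformal structure; here the key qualitative point is that conformality passes to the essential range of the cocycle, which by full support consists of all $A_i$, including the hyperbolic $A_{i_0}$ whose non-conformality is rigid.
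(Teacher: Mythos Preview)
Your Part 1 is essentially what the paper does, except that the paper simply cites the Gibbs property (Proposition~\ref{prop:Kaenmaki}, due to K\"aenm\"aki--Reeve) rather than re-deriving it. The connecting-word lemma you describe is indeed the mechanism behind that result in the irreducible two-dimensional case, so this part is fine.

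Part 2, however, has a genuine gap. Notice that after the reduction to ergodic $\mu$ your argument uses only three ingredients: ergodicity, full support, and the presence of a hyperbolic $A_{i_0}$. You never again invoke that $\mu$ is an \emph{equilibrium state} for $\varphi^s$. But those three ingredients alone do not force $\lambda_1(\mu)\neq\lambda_2(\mu)$. Take for instance $A_1=\mathrm{diag}(2,1/2)$ and $A_2=\left(\begin{smallmatrix}0&1\\1&0\end{smallmatrix}\right)$: this pair is irreducible, $A_1$ is hyperbolic, and yet the $(\tfrac12,\tfrac12)$ Bernoulli measure is fully supported, ergodic, and has $\lambda_1=\lambda_2$ (see \cite[p.~38]{BoLa85}; the paper itself exploits this phenomenon in \S\ref{ss:insufficient-measures}). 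So whatever argument you run must fail somewhere. The failure is in the step ``locate $x\in[i_0]$ with $E_x$ and $E_{\sigma x}$ close to a common shape $E$'': a hyperbolic $A_{i_0}$ acts on the space of conformal structures $\mathbb{H}^2\cong GL_2(\R)/\R^*O(2)$ as a hyperbolic isometry with translation length $\tau>0$, so $d_{\mathbb{H}^2}(E_x,E_{\sigma x})=d_{\mathbb{H}^2}(E_x,A_{i_0}\cdot E_x)\geq\tau$ for \emph{every} $x\in[i_0]$. Lusin continuity cannot help because $x$ and $\sigma x$ are never close in $\Sigma_M$, and Poincar\'e recurrence only gives $E_{\sigma^n x}\approx E_x$ for a product $A(x,n)$ that need not be a power of $A_{i_0}$ (and indeed, when the exponents coincide, is typically close to conformal). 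There is also a prior issue: it is not clear that ``degenerate Oseledets'' produces an exact measurable conformal structure rather than merely subexponential growth in every direction.

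The paper's argument is quite different and uses the equilibrium property in an essential way. If $\lambda_1(\mu)=\lambda_2(\mu)$ one checks that $\mu$ is simultaneously an equilibrium state for the locally constant potential $x\mapsto\frac{s}{2}\log|\det A_{x_1}|$, and hence $\mu$ is a Bernoulli measure. In the strongly irreducible case Furstenberg's theorem then gives $\lambda_1\neq\lambda_2$ for Bernoulli measures, a contradiction. In the irreducible but not strongly irreducible case the matrices are, up to a change of basis, a mix of diagonal and anti-diagonal matrices; one then exhibits permutations $\ii,\jj$ of a common multiset of symbols with $\varphi^s(A_{\ii})/\varphi^s(A_{\jj})\to\infty$, which is incompatible with the Gibbs property for a Bernoulli measure.
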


At several points the proof of Theorem \ref{th:properties-of-mu} splits depending on whether or not $(A_1,\ldots,A_M)$ is strongly irreducible. In the next lemma we characterize the structure of the $A_i$ in the irreducible but not strongly irreducible situation. This characterization is certainly well-known, but we include the proof for the reader's convenience. We recall that a matrix is called \emph{anti-diagonal} if all elements off the top-right to lower-left diagonal are zero.
\begin{lemma} \label{le:irr-not-strong-irr}
Let $A_1,\ldots,A_M \in GL_2(\mathbb{R})$. Suppose that the matrices $A_1,\ldots,A_M$ do not have a common one-dimensional invariant subspace, that one of them is hyperbolic, and that there is a finite union of one-dimensional subspaces which is invariant under all $A_i$. Then after a change of basis all the $A_i$ are either diagonal or anti-diagonal, with both cases occurring.
\end{lemma}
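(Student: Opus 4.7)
The plan is to first identify the minimal common invariant finite union of one-dimensional subspaces, then use the hyperbolic generator to force this union to consist of exactly two lines, and finally observe that in the eigenbasis of the hyperbolic generator these two lines become the coordinate axes, whence each $A_i$ (since it permutes them) must be diagonal or anti-diagonal.

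More concretely, let $V = L_1 \cup \cdots \cup L_k$ be a finite union of distinct one-dimensional subspaces invariant under every $A_i$, chosen so that $k$ is as small as possible. Then each $A_i$ permutes the set $\{L_1,\ldots,L_k\}$, and by hypothesis we have $k \geq 2$ since no single $L_j$ is common invariant. Let $A = A_{i_0}$ be a hyperbolic generator, with distinct real eigenvalues $|\lambda_+| > |\lambda_-|$ and corresponding eigenlines $L^+$ and $L^-$. If some $L_j$ were not equal to $L^+$ or $L^-$, then writing $L_j$ in the eigenbasis of $A$ as the span of $v = v_+ + v_-$ with $v_\pm \in L^\pm \setminus\{0\}$, the iterates $A^n L_j$ would be spanned by $\lambda_+^n v_+ + \lambda_-^n v_-$ and hence converge to $L^+$ in $\mathbb{RP}^1$. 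Since the $A$-orbit of $L_j$ lies in the finite set $\{L_1,\ldots,L_k\}$, it would have to be eventually periodic, forcing $A^n L_j = L^+$ for large $n$, which is impossible unless $L_j = L^+$ already. Thus every $L_j$ is an eigenline of $A$, so $k = 2$ and $\{L_1, L_2\} = \{L^+, L^-\}$.

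After changing basis so that $L^+ = \operatorname{span}(e_1)$ and $L^- = \operatorname{span}(e_2)$, each $A_i$ permutes $\{L_1, L_2\}$, so it is either diagonal (if it fixes both lines) or anti-diagonal (if it swaps them). The hyperbolic generator $A_{i_0}$ is diagonal in this basis, so the diagonal case occurs. The anti-diagonal case must also occur, for otherwise every $A_i$ would preserve $L_1$, contradicting the hypothesis that the $A_i$ have no common invariant one-dimensional subspace.

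The only part that requires genuine care is the step showing $k = 2$; the rest is essentially formal. The contractive-to-$L^+$ argument uses in an essential way that the eigenvalues of the hyperbolic $A_{i_0}$ are real and distinct in modulus, which is exactly the definition of hyperbolicity recalled just before the statement of Theorem~\ref{th:approx-by-Bernoulli}. No further input is needed.
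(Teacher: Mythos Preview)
Your proof is correct and follows essentially the same approach as the paper's: diagonalize the hyperbolic generator, observe that any line not equal to one of its eigenlines has infinite projective orbit under it (you spell out the convergence-to-$L^+$ argument that the paper leaves implicit), and conclude that the invariant finite set of lines must be exactly the two eigenlines. The minimality of the invariant set is not actually needed---any invariant finite union already forces $\{L_1,\ldots,L_k\}\subseteq\{L^+,L^-\}$ by your argument---but it does no harm.
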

\begin{proof}
After a change of basis, we can assume the given hyperbolic matrix $A_j$ is diagonal. The projective orbit of any non-principal line under $A_j$ is infinite, so the only non-trivial set of lines that is fixed by all the $A_i$ is $\{ e_0, e_1\}$, the standard basis of $\R^2$. This means that all the $A_i$ either map $e_i$ to $e_i$ (in which case they are diagonal), or $e_i$ to $e_{1-i}$ (in which case they are anti-diagonal), and the latter case must occur since $e_0$ is not invariant under all $A_i$.
\end{proof}

For the proof of Theorem  \ref{th:properties-of-mu} we will rely on the following Gibbs property of the K\"{a}enm\"{a}ki measure in the irreducible case; see \cite[Propositions 2.3 and 3.4 and Theorem 3.7]{KaRe14}:
\begin{proposition} \label{prop:Kaenmaki}
Let $A=(A_1,\ldots,A_M) \in GL_2(\mathbb{R})$ be irreducible. Then there exists a unique equilibrium state $\mu$ for $\varphi^s$. This measure is ergodic and satisfies the following Gibbs property: there exists $C>0$ such that
\begin{equation} \label{eq:Gibbs-property}
C^{-1}  \le \frac{\mu([\ii])}{\varphi^s(A_\ii) e^{n P(\varphi^s,A)}} \le C
\end{equation}
for all finite words $\ii\in\{1,\ldots,M\}^n$.
\end{proposition}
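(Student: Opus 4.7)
The plan is to deduce all three conclusions (uniqueness, ergodicity, and the Gibbs property) from a single structural property of $\varphi^s$ on the semigroup generated by $A_1,\ldots,A_M$, namely \emph{quasi-multiplicativity}: there exist a finite collection $F$ of finite words and a constant $c>0$ such that for every pair of words $\ii, \jj$ one can find $\kk\in F$ with
\[
\varphi^s(A_{\ii\kk\jj}) \ge c\, \varphi^s(A_\ii)\, \varphi^s(A_\jj).
\]
Combined with the submultiplicativity $\varphi^s(A_{\ii\jj}) \le \varphi^s(A_\ii)\varphi^s(A_\jj)$ recorded in Section~\ref{se:preliminaries}, this is the standard input needed to run subadditive thermodynamic formalism on the full shift, and it forces a unique ergodic equilibrium state with the Gibbs property.

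For the quasi-multiplicativity step I would split on the range of $s$. For $0\le s\le 1$, $\varphi^s(A)=\|A\|^s$ and it suffices to prove the analogous inequality for the operator norm. From the singular value decomposition, $\|A_\ii A_\kk A_\jj\|$ is bounded below by $\|A_\ii\|\,\|A_\kk\|\,\|A_\jj\|$ times the sines of two angles: one between the image under $A_\kk$ of the top singular direction of $A_\jj$ and the bottom singular direction of $A_\ii$, and an analogous one involving $A_\kk$ on the other side. Irreducibility of $(A_1,\ldots,A_M)$ prevents the projective semigroup from stabilising any line, so a compactness argument on $\RP^1\times\RP^1$ produces a single finite set $F$ of words that realises a uniform lower bound on both angles, regardless of the ``forbidden'' directions determined by $\ii$ and $\jj$. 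For $1\le s<2$ one writes $\varphi^s(A)=\|A\|^{2-s}|\det A|^{s-1}$; since $A\mapsto|\det A|$ is exactly multiplicative on products of the $A_i$, quasi-multiplicativity reduces to the previous case.

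Given quasi-multiplicativity, the remainder is routine. The partition functions $Z_n := \sum_{|\ii|=n}\varphi^s(A_\ii)$ satisfy $Z_n Z_m \asymp Z_{n+m}$, so normalising $\varphi^s(A_\ii)$ over cylinders of length $n$ and passing to a weak-$*$ subsequential limit yields a shift-invariant measure $\mu$ satisfying the Gibbs bound~\eqref{eq:Gibbs-property}. A direct computation from \eqref{eq:Gibbs-property} shows that $\mu$ attains the subadditive variational principle, so it is an equilibrium state. Conversely, any equilibrium state $\mu'$ must be absolutely continuous with respect to $\mu$ with bounded density, as otherwise the Gibbs estimate together with the variational principle yields a strictly larger value of the pressure functional; applying the same reasoning to $\mu$ against $\mu'$ gives mutual equivalence with bounded Radon--Nikodym derivative. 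By extremality of equilibrium states among invariant measures, this forces uniqueness and ergodicity.

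I expect the genuinely nontrivial step to be the quasi-multiplicativity, and within it the translation of the algebraic hypothesis of irreducibility into a uniform quantitative statement via the $\RP^1$ compactness argument; the bookkeeping required to handle both angle estimates through a single insertion word $\kk$, and the mild subtlety at the boundary $s=1$ between the two regimes of $\varphi^s$, are where care is needed. Everything downstream is standard subadditive thermodynamic formalism, as developed in the Feng--K\"aenm\"aki framework that \cite{KaRe14} builds on.
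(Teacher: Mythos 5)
The paper does not actually prove this proposition: it is quoted from K\"aenm\"aki--Reeve \cite[Propositions 2.3 and 3.4 and Theorem 3.7]{KaRe14}, and your quasi-multiplicativity route (irreducibility plus a compactness argument on $\RP^1\times\RP^1$ to produce a uniform finite insertion set $F$, the identity $\varphi^s(A)=\|A\|^{2-s}|\det A|^{s-1}$ to reduce the range $1\le s<2$ to the norm case via multiplicativity of the determinant, and then the standard construction of an ergodic Gibbs measure which is the unique equilibrium state) is essentially the argument underlying that citation, so your approach matches the intended proof. The only spot I would tighten is the final uniqueness/ergodicity step: rather than ``extremality'', the standard finish is that $\nu\mapsto h(\nu)+\lim_{n}\frac{1}{n}\int\log\varphi^s(A(x,n))\,d\nu$ is affine, so ergodic components of any equilibrium state are again equilibrium states, and the Gibbs upper bound together with mutual singularity of distinct ergodic measures excludes any component other than $\mu$.
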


Note that the fact that $\mu$ is globally supported follows at once from this proposition.
Let us show that $\mu$ has simple Lyapunov exponents, that is, that $\lambda_1(\mu)\neq \lambda_2(\mu)$. For this, we rely on:
\begin{lemma}
Suppose that $\mu$ is an equilibrium state for $\varphi^s$ such that $\lambda_1(\mu)=\lambda_2(\mu)$. Then $\mu$ is an equilibrium state for the function $A\mapsto |\det A|^{s/2}$; that is, it maximizes the expression
\[h(\nu)+\int \log|\det A_{x_1}| d\nu(x)\]
over all $\sigma$-invariant Borel probability measures $\nu$ on $\Sigma_M$. In particular, $\mu$ is a Bernoulli measure.
\end{lemma}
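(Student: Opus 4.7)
The plan is to compare the pressure $P(\varphi^s,A)$ with the ``determinant pressure''
\[
Q(s) := \sup_\nu \left( h(\nu) + \tfrac{s}{2}\int \log|\det A_{x_1}|\,d\nu(x) \right),
\]
where the supremum runs over $\sigma$-invariant Borel probabilities $\nu$ on $\Sigma_M$. Note $Q(s)$ is a classical (additive) pressure for a continuous potential depending only on the first coordinate, so its unique equilibrium state is a Bernoulli measure. The goal is therefore to show $\mu$ attains the supremum in $Q(s)$.

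First I would observe that for \emph{every} $\sigma$-invariant $\nu$ and every $0\le s \le 2$, a direct inspection of the three-case formula for $\lim_n \tfrac{1}{n}\int \log\varphi^s(A(x,n))\,d\nu$ recalled in Section \ref{se:preliminaries} yields the inequality
\[
\lim_{n\to\infty}\frac{1}{n}\int \log\varphi^s(A(x,n))\,d\nu(x) \;\ge\; \tfrac{s}{2}\bigl(\lambda_1(\nu)+\lambda_2(\nu)\bigr),
\]
with equality whenever $\lambda_1(\nu)=\lambda_2(\nu)$. Indeed, the difference of the two sides equals $(1-\tfrac{s}{2})(\lambda_1(\nu)-\lambda_2(\nu))\ge 0$ for $1\le s\le 2$ and $\tfrac{s}{2}(\lambda_1(\nu)-\lambda_2(\nu))\ge 0$ for $0\le s\le 1$. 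Since $\log|\det A(\cdot,n)|$ is exactly the $n$-th Birkhoff sum of $\log|\det A_{x_1}|$, the ergodic theorem and the identity $|\det A|=\alpha_1(A)\alpha_2(A)$ give $\lambda_1(\nu)+\lambda_2(\nu)=\int \log|\det A_{x_1}|\,d\nu$. Substituting this into the subadditive variational principle for $P(\varphi^s,A)$ stated in Section \ref{se:preliminaries} gives
\[
P(\varphi^s,A) \;\ge\; h(\nu) + \tfrac{s}{2}\int \log|\det A_{x_1}|\,d\nu(x)
\]
for all $\sigma$-invariant $\nu$, and hence $P(\varphi^s,A)\ge Q(s)$.

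Now I apply this to the equilibrium state $\mu$ of $\varphi^s$. By the hypothesis $\lambda_1(\mu)=\lambda_2(\mu)$ the inequality in the first display becomes an equality for $\nu=\mu$, so
\[
P(\varphi^s,A) \;=\; h(\mu) + \lim_n \frac{1}{n}\int \log\varphi^s(A(x,n))\,d\mu(x) \;=\; h(\mu) + \tfrac{s}{2}\int \log|\det A_{x_1}|\,d\mu(x) \;\le\; Q(s).
\]
Combined with the reverse inequality $P(\varphi^s,A)\ge Q(s)$ from the previous paragraph, we conclude $P(\varphi^s,A)=Q(s)$ and that $\mu$ attains the supremum defining $Q(s)$. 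This is precisely the statement that $\mu$ is an equilibrium state for the potential $A\mapsto |\det A|^{s/2}$.

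For the final sentence of the lemma, since the corresponding additive potential $\phi(x)=\tfrac{s}{2}\log|\det A_{x_1}|$ depends only on the coordinate $x_1$, the classical variational principle readily identifies the unique such equilibrium state as the Bernoulli measure with weights $p_i = |\det A_i|^{s/2}/\sum_j|\det A_j|^{s/2}$; thus $\mu$ is Bernoulli. There is no serious obstacle in this argument: the whole proof reduces to the elementary convexity inequality in the first step, together with the standard uniqueness of equilibrium states for a one-coordinate potential.
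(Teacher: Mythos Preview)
Your proof is correct and is essentially the same as the paper's: both hinge on the elementary inequality $\varphi^s(A)\ge |\det A|^{s/2}$ (equivalently, your Lyapunov-exponent inequality) together with the observation that equality holds in the asymptotic for $\mu$ when $\lambda_1(\mu)=\lambda_2(\mu)$. The only cosmetic difference is that the paper phrases the comparison as a proof by contradiction, whereas you argue directly by showing $P(\varphi^s,A)=Q(s)$ and that $\mu$ attains the supremum.
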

\begin{proof}
Since $\mu$ has equal Lyapunov exponents
\[\inf_{n \geq 1}\frac{1}{n}\int \log \varphi^s\left(A(x,n)\right)d\mu(x)=\frac{s}{2}\left(\lambda_1(\mu)+\lambda_2(\mu)\right)=\frac{s}{2}\int \log|\det A_{x_1}|d\mu(x).\]
If the conclusion of the lemma is false then there exists a measure $\nu$ such that
\[h(\nu)+\int_{\Sigma_M} \log|\det A_{x_1}|d\nu(x)>h(\mu)+\int_{\Sigma_M} \log|\det A_{x_1}|d\mu(x),\]
but then we have
\begin{align*}
h(\nu)+\inf_{n\geq 1}\frac{1}{n}\int \log \varphi^s(A(x,n))d\nu(x)
&\geq h(\nu)+\frac{s}{2}\int \log|\det A_{x_1}| d\nu(x)\\
&> h(\mu)+\frac{s}{2}\int \log|\det A_{x_1}| d\mu(x)\\
&=h(\mu)+\inf_{n \geq 1}\frac{1}{n}\int \log \varphi^s\left(A(x,n)\right)d\mu(x),
\end{align*}
using the elementary inequality $\varphi^s(A)\geq |\det A|^{s/2}$ together with the invariance of $\nu$. In particular $\mu$ is not an equilibrium state for $\varphi^s$, which is a contradiction. The fact that $\mu$ is a Bernoulli measure follows from the fact that $\log\left(|\det A_{x_1}|^{s/2}\right)$ depends only on the first co-ordinate of $x\in \Sigma_M$.\end{proof}

To conclude the proof of  Theorem  \ref{th:properties-of-mu}, we again distinguish two cases: the case in which the system is strongly irreducible, and that in which it is irreducible but not strongly irreducible. In the first case, we know from Furstenberg's Theorem (see e.g. \cite[p.30]{BoLa85}) that Lyapunov exponents for Bernoulli measures are distinct, so we obtain a contradiction with the previous lemma. From now on we assume we are in the latter case. In light of Lemma  \ref{le:irr-not-strong-irr} and the previous lemma, the proof of Theorem \ref{th:properties-of-mu} will be finished once we establish the following.
\begin{lemma}\label{le:Bernoulli-not-Gibbs}
Let $A_1,\ldots,A_M \in GL_2(\mathbb{R})$. Suppose that at least one matrix $A_i$ is diagonal and hyperbolic, and that at least one other matrix is anti-diagonal. Then for every $0<s<2$, the equilibrium state of $(A_1,\ldots,A_M)$ for $\varphi^s$ is not a Bernoulli measure.
\end{lemma}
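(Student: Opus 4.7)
The plan is to assume for contradiction that the equilibrium state $\mu$ for $\varphi^s$ is Bernoulli with weights $(p_1,\ldots,p_M)$, and derive a contradiction from the Gibbs property. First I would observe that the hypotheses of the lemma imply irreducibility of $(A_1,\ldots,A_M)$: the diagonal hyperbolic matrix has exactly the two coordinate axes as invariant one-dimensional subspaces, and an anti-diagonal matrix swaps these axes, so no common invariant one-dimensional subspace exists. Proposition \ref{prop:Kaenmaki} then supplies a constant $C\ge 1$ for which $C^{-1}\le \mu([\ii])/(\varphi^s(A_\ii)e^{nP(\varphi^s,A)})\le C$ for every finite word $\ii$. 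Under the Bernoulli assumption this forces $\varphi^s(A_\ii)$, up to a bounded factor, to depend only on the multiset of letters in $\ii$.

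After relabeling, let $A_1=\mathrm{diag}(a,b)$ with $|a|>|b|>0$, and write
\[
 A_2=\begin{pmatrix}0 & c \\ d & 0\end{pmatrix},
\]
with $cd\neq 0$ since $A_2\in GL_2(\mathbb{R})$. For each even $n$, I would compare the two length-$2n$ words
\[
 W_1=(1,2,1,2,\ldots,1,2), \qquad W_2=(\underbrace{1,\ldots,1}_{n},\underbrace{2,\ldots,2}_{n}),
\]
which have identical letter counts, so that $\mu([W_1])=\mu([W_2])$. The key algebraic observation is that both $A_2^2$ and $(A_2A_1)^2$ are scalar: direct computation gives $A_2^2=cd\cdot I$ and $(A_2A_1)^2=abcd\cdot I$. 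Hence
\[
 A_{W_1}=(A_2A_1)^n=(abcd)^{n/2} I, \qquad A_{W_2}=A_2^n A_1^n=(cd)^{n/2}\,\mathrm{diag}(a^n,b^n),
\]
so $A_{W_1}$ has both singular values equal to $|abcd|^{n/2}$, while $A_{W_2}$ has singular values $|cd|^{n/2}|a|^n$ and $|cd|^{n/2}|b|^n$.

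Substituting these into the piecewise definition of $\varphi^s$ in the ranges $0<s<1$ and $1\le s<2$, a short calculation yields
\[
 \frac{\varphi^s(A_{W_1})}{\varphi^s(A_{W_2})}=\left(\frac{|b|}{|a|}\right)^{c_s n},
\]
with $c_s=s/2$ when $0<s<1$ and $c_s=1-s/2$ when $1\le s<2$. In both cases $c_s>0$, and since $|b|/|a|<1$ this ratio decays exponentially in $n$, contradicting the Gibbs bound which forces the ratio to stay in $[C^{-2},C^2]$. The assumption $s<2$ is essential and visible in the computation: at $s=2$ one has $\varphi^2(A_{W_1})=\varphi^2(A_{W_2})=|abcd|^n$, reflecting that $\varphi^2$ reduces to the determinant, which is order-insensitive. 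The only real work is the matrix calculation and the case split on $s$; the choice of the pair $(W_1,W_2)$ to exploit the scalar identity $(A_2A_1)^2\in\mathbb{R}\cdot I$ is what drives the contradiction.
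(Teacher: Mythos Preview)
Your proof is correct and follows essentially the same strategy as the paper: assume the equilibrium state is Bernoulli, invoke the Gibbs inequality from Proposition~\ref{prop:Kaenmaki} to conclude that $\varphi^s(A_\ii)/\varphi^s(A_\jj)$ is bounded whenever $\ii$ and $\jj$ are rearrangements of one another, and then exhibit a pair of such words for which the ratio is unbounded. The paper's choice of words is $i^{2n}j$ versus $i^n j\, i^n$ (one copy of the anti-diagonal letter sandwiched differently among $2n$ copies of the diagonal letter), whereas you use $(12)^n$ versus $1^n 2^n$; both choices work by the same mechanism of forcing one product to have nearly equal singular values while the other does not. Your explicit computation of the exponent $c_s$ via the scalar identities $A_2^2=cd\cdot I$ and $(A_2A_1)^2=abcd\cdot I$ is a pleasant touch that makes the singular value calculation completely transparent.
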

\begin{proof}
The system is irreducible thanks to the presence of the anti-diagonal matrix. We can then apply Proposition \ref{prop:Kaenmaki}. Suppose that $\mu$ is an equilibrium state of $(A_1,\ldots,A_M)$ for $\varphi^s$ which is also a Bernoulli measure, and let $\ii,\jj\in\{1,\ldots,M\}^n$ be permutations of each other. Since $\mu$ is a Bernoulli measure, $\mu([\ii])=\mu([\jj])$. Hence, the Gibbs property \eqref{eq:Gibbs-property} implies the inequality
\[
\varphi^s(A_\ii) \le C^2 \varphi^s(A_\jj)
\]
independently of $n$. Now suppose that $A_i$ is hyperbolic and diagonal and that $A_j$ is anti-diagonal. It is easy to check that
\[
\frac{\varphi^s(A_i^{2n} A_j)}{\varphi^s(A_i^n A_j A_i^n)} \to \infty
\]
as $n\to\infty$, and this contradiction finishes the proof.
\end{proof}

\subsection{Insufficiency of fully-supported Bernoulli measures}\label{ss:insufficient-measures}
The results in this section suffice to prove the assertion made below the statement of Theorem \ref{th:approx-by-Bernoulli}: there exists a tuple $A=(A_1,\ldots,A_M)$ such that $\adim(A)$ is \emph{not} equal to the supremum of $\ldim(A,\mu)$ taken over all \emph{fully-supported} probability measures $\mu$ which are $\sigma^n$-invariant Bernoulli measures for some integer $n\geq 1$. To see this let $A=(A_1,\ldots,A_M)$ be given by a mixture of anti-diagonal matrices and diagonal matrices, with at least one matrix being hyperbolic.
To simplify the argument we shall assume additionally that $0<\adim(A)\leq 1$, but the case in which $1<\adim(A)<2$ may be handled similarly. Let $s:=\adim(A)\leq 1$ and consider the two pressures
\[P_1(A,s):=\lim_{n\to\infty} \frac{1}{n}\log \sum_{|\mathtt{i}|=n}\left|\det A_{\mathtt{i}}\right|^{\frac{s}{2}}=\sup_\mu \left[h(\mu)+\int \log|\det A_{x_1}|d\mu(x)\right], \]
\[P_2(A,s):=\lim_{n\to\infty} \frac{1}{n}\log \sum_{|\mathtt{i}|=n}\varphi^s(A_{\mathtt{i}})=\sup_\mu \left[h(\mu)+s\lambda_1(\mu)\right].\]
Since we always have $|\det A_{\mathtt{i}}|^{s/2}\leq \varphi^s(A_{\mathtt{i}})$ it follows that $P_1(A,s)\leq P_2(A,s)$.  If the two pressures are equal then by the same inequality any equilibrium state for $P_1$ must be an equilibrium state for $P_2$, but such an equilibrium state must be a Bernoulli measure since the potential $\log |\det A_{x_1}|$ depends only on the first co-ordinate of $x \in \Sigma_M$. By Lemma \ref{le:Bernoulli-not-Gibbs} this is impossible, and therefore $P_1(A,s)<P_2(A,s)$.

Now suppose that $\nu$ is a Bernoulli measure for $\sigma^n$ with full support. In this case one may show that the particular structure of the matrices $A_1,\ldots,A_M$ implies that the Lyapunov exponents $\lambda_1(\nu)$, $\lambda_2(\nu)$ must be equal (see e.g. \cite[p.38]{BoLa85}). Applying the variational principle for the transformation $\sigma^n$ it follows that
\[h(\nu)+s\lambda_1(\nu)=h(\nu)+\frac{s}{2}\left(\lambda_1(\nu)+\lambda_2(\nu)\right) \leq nP_1(A,s)<nP_2(A,s)=0\]
and since $s=\adim(A)\leq 1$ we have $h(\nu)=-s\lambda_1(\nu)\leq -\lambda_1(\nu)$ and therefore
\begin{align*}
\ldim(A,\nu) &= \frac{h(\nu)}{-\lambda_1(\nu)} \leq s+ \frac{nP_1(A,s)}{-\lambda_1(\nu)} \leq s +\frac{P_1(A,s)}{-\min_{1\leq i \leq M}\frac{1}{2}\log |\det A_i|} \\
&=\adim(A) +\frac{P_1(A,s)}{-\min_{1\leq i \leq M}\frac{1}{2}\log |\det A_i|}
\end{align*}
which is less than $\adim(A)$ by an amount not depending on $\nu$ or $n$. This completes the proof of the assertion.

\section{Regular subsystems}
\label{se:regular-subsystem}
We recall some further definitions.  Given a set $\mathsf{A}$ of matrices in $\R^{2\times 2}$, its \emph{joint spectral radius} and \emph{lower spectral radius} are given, respectively, by
\begin{align*}
\inf_{n \geq 1} & \sup_{B_1,\ldots,B_n \in \mathsf{A}} \|B_1\cdots B_n\|^{1/n} ,\\
\inf_{n \geq 1} & \inf_{B_1,\ldots,B_n \in \mathsf{A}} \|B_1\cdots B_n\|^{1/n} .\end{align*}
In both cases the infimum is also a limit: see for example \cite{Ju09}. It follows easily that both quantities are independent of the choice of norm and/or basis on $\mathbb{R}^2$.

We let $\mathbb{RP}^1$ denote the real projective line, which is the set of all lines through the origin in $\mathbb{R}^2$. We let $\overline{u}\in\mathbb{RP}^1$ denote the line generated by the nonzero vector $u \in\mathbb{R}^2$. We equip $\mathbb{RP}^1$ with the metric $d$ given by
\[d(\overline{u},\overline{v})=\frac{\|u\wedge v\|}{\|u\|\cdot\|v\|}\]
for nonzero $u \in \overline{u}$, $v \in \overline{v}$. Clearly the choice of $u \in \overline{u}$, $v \in \overline{v}$ in the definition is immaterial when $\overline{u}$ and $\overline{v}$ are fixed. Since
\[\|u\wedge v\|^2=\langle u,u\rangle \langle v,v\rangle - \langle u,v\rangle^2 = \|u\|^2\|v\|^2(1-\cos^2\angle (u,v))\]
this metric defines the distance between two subspaces to be the sine of the angle between them. We will abuse notation by writing $A$ to denote the projective linear transformation $\mathbb{RP}^1\to\mathbb{RP}^1$ induced by an invertible matrix $A \in\mathbb{R}^{2\times 2}$ as well as the matrix itself.

For the purposes of this article a \emph{cone} in $\mathbb{R}^2$ is a closed, positively homogenous, convex subset of $\mathbb{R}^2\setminus \{0\}$ with nonempty interior. We say that a matrix $A$ \emph{strictly preserves} a cone $\mathcal{C}$ if $A\mathcal{C}$ is a subset of the interior of $\mathcal{C}$, and we say that a (finite) set of matrices strictly preserves $\mathcal{C}$ if this is true of all of its elements. We note that a set $\mathcal{C}$ is a cone if and only if there exists a closed projective interval $\mathcal{K}\subset \mathbb{RP}^1$ such that $\mathcal{C}$ is one of the two connected components of the set $\{u \in\mathbb{R}^2\setminus \{0\}\colon\overline{u}\in\mathcal{K}\}$. In view of this it is easy to see that a matrix $A$ (strictly) preserves a cone in $\mathbb{R}^2$ if and only if there exists a basis in which its entries are all (strictly) positive.

We recall the following version of Oseledets' multiplicative ergodic theorem in the plane:
\begin{theorem} \label{th:Oseledets}
Let $\sigma$ be an invertible measure-preserving transformation of the probability space $(X,\mathcal{F},\mu)$ and let $A \colon X \times \mathbb{Z} \to GL_2(\mathbb{R})$ be a measurable linear cocycle such that $\int \left|\log \|A(x,1)\|\right|d\mu(x)<\infty$. Define
\[\lambda_i:=\lim_{n\to\infty} \frac{1}{n}\int \log\alpha_i(A(x,n))d\mu(x)\]
for $i=1,2$, and suppose that these two values are unequal. Then there exist measurable functions $\mathfrak{u},\mathfrak{s} \colon X \to \mathbb{RP}^1$ such that for $\mu$-a.e. $x \in X$
\begin{enumerate}[(i)]
\item
$A(x,n)\mathfrak{u}(x)=\mathfrak{u}(\sigma^nx)$ and $A(x,n)\mathfrak{s}(x)=\mathfrak{s}(\sigma^nx)$
\item
For all nonzero $u \in\mathfrak{u}(x)$ and $v \in \mathfrak{s}(x)$,
\[\lim_{n \to \infty} \frac{1}{n}\log \|A(x,n)u\| = \lambda_1,\]
\[\lim_{n \to \infty} \frac{1}{n}\log \|A(x,n)v\| = \lambda_2.\]
\end{enumerate}
\end{theorem}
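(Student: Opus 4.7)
The plan is to prove this standard planar Oseledets theorem by the familiar two-sided construction: $\mathfrak{u}(x)$ arises as a \emph{backward} limit of top left singular directions of $A(\sigma^{-n}x, n)$, and $\mathfrak{s}(x)$ as a \emph{forward} limit of bottom right singular directions of $A(x, n)$, as $n \to \infty$. The spectral gap $\lambda_1 > \lambda_2$ provides exponential contraction of the projective action along the cocycle, forcing the limits to exist. After passing to the ergodic decomposition we may assume $\mu$ is ergodic, so that Kingman's subadditive ergodic theorem upgrades the defining limits to pointwise convergence $\tfrac{1}{n}\log\alpha_i(A(x,n)) \to \lambda_i$ for $\mu$-a.e.\ $x$.

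The heart of the argument is the following quantitative contraction lemma on $\mathbb{RP}^1$: if $B \in GL_2(\mathbb{R})$ has distinct singular values with top left singular direction $\bar p(B)$ and bottom right singular direction $\bar q(B)$, then for every $\bar w \in \mathbb{RP}^1$ with $d(\bar w, \bar q(B)) \geq \delta$ we have $d(B\bar w, \bar p(B)) \leq \alpha_2(B)/(\alpha_1(B)\,\delta)$. For each $n \geq 1$ let $\mathfrak{u}_n(x) \in \mathbb{RP}^1$ be the top left singular direction of $A(\sigma^{-n}x, n)$. The cocycle identity yields
\[
A(\sigma^{-(n+1)}x,\, n+1) \;=\; A(\sigma^{-n}x, n)\,A(\sigma^{-(n+1)}x, 1),
\]
so $\mathfrak{u}_{n+1}(x)$ equals $A(\sigma^{-n}x, n)$ applied to some $\bar w_n \in \mathbb{RP}^1$. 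Since the singular value ratio of the outer factor is at most $e^{-n(\lambda_1-\lambda_2)+o(n)}$ by the first step, the contraction lemma yields $d(\mathfrak{u}_{n+1}(x), \mathfrak{u}_n(x)) \leq C(x)\, e^{-n(\lambda_1-\lambda_2)/2}$ for $\mu$-a.e.\ $x$, provided $\bar w_n$ stays at definite projective distance from $\bar q(A(\sigma^{-n}x, n))$. Summability of this estimate defines $\mathfrak{u}(x) := \lim_n \mathfrak{u}_n(x)$; the analogous construction for the forward cocycle produces $\mathfrak{s}(x)$.

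Equivariance follows from the cocycle identity $A(\sigma^{n-k}x, k) = A(x,n)\,A(\sigma^{n-k}x, k-n)$ valid for $k > n$. The right-hand factor has exponentially widening singular value gap as $k \to \infty$, so the image of the unit disk under it collapses onto the line spanned by its top left singular direction $\mathfrak{u}_{k-n}(x) \to \mathfrak{u}(x)$; applying $A(x,n)$, the image of the unit disk under the whole product collapses onto $A(x,n)\mathfrak{u}(x)$. Hence the top left singular direction of the product, which is $\mathfrak{u}_k(\sigma^n x) \to \mathfrak{u}(\sigma^n x)$, must equal $A(x,n)\mathfrak{u}(x)$, giving $A(x,n)\mathfrak{u}(x) = \mathfrak{u}(\sigma^n x)$; the statement for $\mathfrak{s}$ is symmetric. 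The growth rates then follow because, once one knows $\mathfrak{u}(x) \neq \mathfrak{s}(x)$ for $\mu$-a.e.\ $x$, any unit $u \in \mathfrak{u}(x)$ has its component along the top right singular direction of $A(x,n)$ bounded below uniformly in $n$, so that $\|A(x,n)u\| = (1+o(1))\alpha_1(A(x,n))$ and therefore $\tfrac{1}{n}\log \|A(x,n)u\| \to \lambda_1$; the same argument for $v \in \mathfrak{s}(x)$ using the bottom right singular direction gives $\tfrac{1}{n}\log \|A(x,n)v\| \to \lambda_2$.

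The main obstacle is ensuring that $\mu$-a.e.\ $x$ avoids the exceptional sets on which the contraction estimate fails, both for the Cauchyness of $(\mathfrak{u}_n), (\mathfrak{s}_n)$ and for the transversality $\mathfrak{u}(x) \neq \mathfrak{s}(x)$ needed for the growth rates. This is handled by a standard tempering argument based on Borel--Cantelli and the integrability hypothesis $\int |\log\|A(x,1)\||\,d\mu < \infty$, which controls the single-step perturbation of singular directions and makes the exceptional events have summable probability.
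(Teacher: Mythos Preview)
The paper does not prove Theorem \ref{th:Oseledets}; it is introduced with ``We recall the following version of Oseledets' multiplicative ergodic theorem in the plane'' and is then used as a black box in the proof of Proposition \ref{pr:submonster}. So there is no proof in the paper to compare against.

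Your outline is the standard two-sided singular-direction construction of the Oseledets splitting and is broadly correct as a sketch. A couple of points deserve tightening if you intend this as an actual proof rather than a summary. First, the step ``$\mathfrak{u}_{n+1}(x)$ equals $A(\sigma^{-n}x,n)$ applied to some $\bar w_n$'' is fine, but identifying $\bar w_n$ precisely enough to control its distance from $\bar q(A(\sigma^{-n}x,n))$ is exactly where the work is; you acknowledge this under ``main obstacle'', but the Borel--Cantelli / tempering argument you invoke needs the integrability of $\log\|A(x,1)^{-1}\|$ as well as $\log\|A(x,1)\|$ (equivalently $\int|\log\alpha_2(A(x,1))|\,d\mu<\infty$), which in the paper's setting follows since the matrices come from a finite set in $GL_2(\mathbb{R})$ but is not quite the stated hypothesis. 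Second, the transversality $\mathfrak{u}(x)\neq\mathfrak{s}(x)$ is usually obtained not by Borel--Cantelli directly but by observing that equality would force the growth rate along $\mathfrak{u}(x)$ to be simultaneously $\lambda_1$ (from the backward construction) and $\lambda_2$ (from the forward one), contradicting $\lambda_1>\lambda_2$; you should make that step explicit rather than folding it into the tempering argument.
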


The technical core of Theorem \ref{th:approx-by-Bernoulli} is the following general result, which is rooted in ideas of \cite{FeSh14}.
\begin{theorem}\label{th:monster}
Let $A_1,\ldots,A_M \in GL_2(\mathbb{R})$, let $\mu$ be a fully-supported ergodic invariant measure on $\Sigma_M$, and let $n_0\geq 1$ and $\varepsilon>0$. Suppose that the Lyapunov exponents $\lambda_1(\mu)$, $\lambda_2(\mu)$ defined by
\[\lambda_i(\mu):=\lim_{n\to\infty} \frac{1}{n}\log \int \alpha_i(A(x,n))d\mu(x)\]
are not equal to one another. Then there exist $n>n_0$ and a subset $\Gamma$ of $\{1,\ldots,M\}^n$ such that:
\begin{enumerate}[(i)]
\item\label{it:tm1}
The cardinality of $\Gamma$ is at least $e^{n(h(\mu)-\varepsilon)}$.
\item\label{it:tm2}
The matrices $\{A_{\mathtt{i}}\colon \mathtt{i}\in \Gamma\}$ strictly preserve a cone $\mathcal{C}$.
\item\label{it:tm2b}
For every $u \in \mathcal{C}$ and $\mathtt{i}\in\Gamma$ we have $\|A_{\mathtt{i}}u\|\geq e^{n(\lambda_1(\mu)-\varepsilon)}\|u\|$. In particular, the set of matrices $\{A_{\mathtt{i}}\colon \mathtt{i}\in \Gamma\}$ has lower spectral radius at least $e^{n(\lambda_1(\mu)-\varepsilon)}$.
\item\label{it:tm3}
The set of matrices $\{A_{\mathtt{i}}\colon \mathtt{i}\in \Gamma\}$ has joint spectral radius at most $e^{n(\lambda_1(\mu)+\varepsilon)}$.
\item\label{it:tm4}
For every $\mathtt{i}\in \Gamma$ we have $e^{n(\lambda_1(\mu)+\lambda_2(\mu)-\varepsilon)} \leq \det A_{\mathtt{i}} \leq e^{n(\lambda_1(\mu)+\lambda_2(\mu)+\varepsilon)}$. In particular $\{A_{\mathtt{i}}\colon \mathtt{i}\in \Gamma\}\subset GL_2^+(\mathbb{R})$.
\item\label{it:tm5}
If $\{A_1,\ldots,A_M\}$ is strongly irreducible then so is $\{A_{\mathtt{i}} \colon \mathtt{i}\in \Gamma\}$.\item
If $\mathtt{k}\in \{1,\ldots,M\}^{k}$ where $1 \leq k \leq n_0$, then $\mathtt{k}$ is a subword of every $\mathtt{i}\in\Gamma$.
\end{enumerate}
\end{theorem}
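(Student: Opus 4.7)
The plan is to use the Oseledets multiplicative ergodic theorem to identify the fast- and slow-growing directions of the cocycle, concentrate on a compact set $K$ of large $\widetilde{\mu}$-measure on which every relevant quantity behaves uniformly, and then take $\Gamma$ to be the set of $n$-words $\mathtt{i}$ whose cylinder meets $K$ and returns to $K$ after time $n$. The cocycle identity $A(\tilde x,n)=A_\mathtt{i}$ then transfers uniform control on $K$ directly to the matrix $A_\mathtt{i}$.

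Concretely I would pass to the natural extension $(\widetilde{\Sigma}_M, \widetilde{\sigma}, \widetilde{\mu})$ in order to apply Theorem~\ref{th:Oseledets}, obtaining measurable Oseledets directions $\mathfrak{u}, \mathfrak{s}\colon \widetilde{\Sigma}_M \to \mathbb{RP}^1$. For small parameters $\delta,\eta>0$ and some $N_0\geq n_0$, Lusin and Egorov combined with Birkhoff and Shannon--McMillan--Breiman produce a compact $K\subset\widetilde{\Sigma}_M$ with $\widetilde{\mu}(K)>1-\delta$ on which (a) $\mathfrak{u},\mathfrak{s}$ are continuous and take values in disjoint projective balls $B_u,B_s$; (b) $\tfrac{1}{n}\log\alpha_i(A(x,n))$ is within $\eta$ of $\lambda_i(\mu)$ uniformly for $n\geq N_0$; (c) $\mu([x_1\cdots x_n])\leq e^{-n(h(\mu)-\eta)}$ uniformly; and (d) every $\mathtt{k}\in\{1,\ldots,M\}^{\leq n_0}$ appears as a subword of $(x_1,\ldots,x_n)$ for $n\geq N_0$ (using that $\mu([\mathtt{k}])>0$ by full support and Birkhoff for the finitely many indicators $\mathbf{1}_{[\mathtt{k}]}$). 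Setting
\[
\Gamma_0=\{\mathtt{i}\in\{1,\ldots,M\}^n : (K\cap[\mathtt{i}])\cap \widetilde{\sigma}^{-n}K\neq\emptyset\},
\]
invariance yields $\widetilde{\mu}(K\cap\widetilde{\sigma}^{-n}K)\geq 1-2\delta$, and combining with (c) gives $|\Gamma_0|\geq e^{n(h(\mu)-2\eta)}$, settling~(\ref{it:tm1}). Picking a witness $\tilde{x}_\mathtt{i}\in K\cap[\mathtt{i}]\cap \widetilde{\sigma}^{-n}K$ for each $\mathtt{i}\in\Gamma_0$, the identity $A(\tilde{x}_\mathtt{i},n)=A_\mathtt{i}$ together with the uniform estimates on $K$ forces $A_\mathtt{i}$ to send every direction a positive projective distance from $B_s$ into an arbitrarily small enlargement of $B_u$, while $\|A_\mathtt{i}\|=e^{n\lambda_1(\mu)+O(n\eta)}$ and $|\det A_\mathtt{i}|=e^{n(\lambda_1+\lambda_2)+O(n\eta)}$. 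Choosing $\mathcal{C}$ to be the cone of a closed projective arc slightly thickening $B_u$ but disjoint from a thickening of $B_s$ yields~(\ref{it:tm2}); decomposing any $u\in\mathcal{C}$ in the Oseledets basis at $\tilde{x}_\mathtt{i}$ produces the lower bound~(\ref{it:tm2b}); the norm estimate is~(\ref{it:tm3}); and the modulus half of~(\ref{it:tm4}) is the determinant estimate. The \emph{sign} in~(\ref{it:tm4}) is not automatic from cone preservation in two dimensions, so one further restricts to the at-most-half of $\Gamma_0$ with $\det A_\mathtt{i}>0$, losing only $\log 2$ in the exponent and absorbing this into $\varepsilon$. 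Property (vii) is precisely condition (d).

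The main obstacle will be strong irreducibility~(\ref{it:tm5}). The key observation is that, under $\lambda_1(\mu)\neq\lambda_2(\mu)$ combined with the estimates above, for $n$ large enough each $A_\mathtt{i}$ is neither a scalar multiple of the identity nor trace-zero: both would force $\|A_\mathtt{i}\|^2$ to equal $|\det A_\mathtt{i}|$ to leading order in $n$, contradicting $\lambda_1\neq\lambda_2$. Consequently each $A_\mathtt{i}$ has two real eigenvalues distinct in modulus, and no power $A_\mathtt{i}^m$ has more than two eigendirections. If a finite union $L$ of lines were preserved by every $A_\mathtt{i}\in\Gamma$, then each $\ell\in L$ would be a line-eigenvector of $A_\mathtt{i}^{(|L|)!}$, forcing $|L|\leq 2$; moreover the attracting eigendirection of each $A_\mathtt{i}$ would belong to $L$ and coincide up to $o(1)$ as $n\to\infty$ with $\mathfrak{u}(\widetilde{\sigma}^n\tilde{x}_\mathtt{i})$. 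By Furstenberg's theorem, strong irreducibility of $(A_1,\ldots,A_M)$ ensures $\mathfrak{u}_*\widetilde{\mu}$ is non-atomic on $\mathbb{RP}^1$, so $K$ may be partitioned into three positive-measure pieces $K_1,K_2,K_3$ whose $\mathfrak{u}$-images lie in pairwise disjoint projective arcs. Strengthening the construction so that $\Gamma$ contains at least one word $\mathtt{i}_j$ with $\widetilde{\sigma}^n\tilde{x}_{\mathtt{i}_j}\in K_j$ for each $j$ (possible by invariance and $\widetilde{\mu}(K_j)>0$) produces three attracting directions in three pairwise disjoint arcs, which cannot all fit in any two-line set, contradicting the assumption.
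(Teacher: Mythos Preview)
Your overall strategy matches the paper's Proposition~\ref{pr:submonster} closely, but there is a structural problem with condition~(a). You ask for a set $K$ with $\widetilde\mu(K)>1-\delta$ on which $\mathfrak{u}$ and $\mathfrak{s}$ take values in fixed disjoint projective balls $B_u,B_s$. This forces $\mathfrak{u}_*\widetilde\mu(B_u)>1-\delta$ and $\mathfrak{s}_*\widetilde\mu(B_s)>1-\delta$, which is impossible whenever either pushforward is spread out---for instance when $\mathfrak{u}_*\widetilde\mu$ is non-atomic, precisely the situation you later invoke for~(\ref{it:tm5}). The paper avoids this by taking a set $Z$ of \emph{small} measure on which the Oseledets directions are localised near a fixed pair $(\overline{w_u},\overline{w_s})$, and then using the ergodic average $\tfrac{1}{N}\sum_{n<N}\mu(\sigma^{-n}Z\cap Z)\to\mu(Z)^2$ to find infinitely many $n$ with $\mu(\sigma^{-n}Z\cap Z\cap Y_n)>\kappa$; your shortcut $\widetilde\mu(K\cap\widetilde\sigma^{-n}K)\geq 1-2\delta$ collapses once $\widetilde\mu(K)$ is small. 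A second, smaller point: preserving a projective arc only gives $A_\ii\mathcal{C}\subset\mathcal{C}\cup(-\mathcal{C})$, not $A_\ii\mathcal{C}\subset\mathcal{C}$, and this is independent of the sign of $\det A_\ii$ (consider $-I$). The paper repairs the cone-swapping and determinant-sign issues simultaneously by passing to a quarter of $\Gamma$ on which both behaviours are constant and then to concatenated pairs $\ii\jj$.

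Your argument for~(\ref{it:tm5}) is genuinely different from the paper's, but the appeal to ``Furstenberg's theorem'' for non-atomicity of $\mathfrak{u}_*\widetilde\mu$ is not justified: Furstenberg's result is for i.i.d.\ products, whereas here $\mu$ is an arbitrary fully-supported ergodic measure. Even the weaker statement you actually need---that $\mathfrak{u}_*\widetilde\mu$ is not supported on at most two points---requires a separate argument, since for non-Bernoulli $\mu$ the coordinate $x_1$ and the past-measurable function $\mathfrak{u}$ are not independent, so one cannot directly conclude that the support of $\mathfrak{u}_*\widetilde\mu$ is $A_i$-invariant for each $i$. The paper sidesteps this entirely with a constructive device: it fixes one $\ii\in\Gamma$, uses strong irreducibility of the original system (Lemma~\ref{le:pablo}) to find a word $\kk$ with $A_\kk\overline u,A_\kk\overline s\notin\{\overline u,\overline s\}$ where $\overline u,\overline s$ are the eigendirections of $A_\ii$, and then manufactures a single additional word $\jj=(\ii^{m_2+m_3}\kk\,\ii^{m_1})^{2n}$ so that $A_\ii^k$ and $A_{\jj}$ are hyperbolic with no common eigenspace. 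Two such matrices already force strong irreducibility, and no information about the distribution of $\mathfrak{u}$ is needed.
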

To see that the growth inequality for vectors $u \in \mathcal{C}$ implies that $\{A_{\mathtt{i}}\colon \mathtt{i}\in\Gamma\}$ has lower spectral radius at least $e^{n(\lambda_1(\mu)-\varepsilon)}$, we note that if $\mathtt{i}_1,\ldots,\mathtt{i}_m \in \Gamma$ and $u \in \mathcal{C}$ is a unit vector then
\[\left\|A_{\mathtt{i}_m}\cdots A_{\mathtt{i}_1}\right\| \geq \left\|A_{\mathtt{i}_m}\cdots A_{\mathtt{i}_1}u \right\| \geq e^{nm(\lambda_1(\mu)-\varepsilon)}\]
since each of these matrices maps $\mathcal{C}$ back into itself and the lower estimate \eqref{it:tm2b} can thus be applied $m$ times iteratively.

We remark that $n$ may be taken arbitrarily large if so desired, since if $\Gamma$ has the properties described above then so does the set $\Gamma':=\{\mathtt{i}_1\cdots \mathtt{i}_k \colon \mathtt{i}_j\in\Gamma\}$ for every integer $k \geq 1$. We will begin by proving a reduced version of Theorem \ref{th:monster}, and then extend the reduced version to the full statement using two subsequent lemmas. The reduced form of Theorem \ref{th:monster} is:
\begin{proposition}  \label{pr:submonster}

Let $A_1,\ldots,A_M \in GL_2(\mathbb{R})$, let $\mu$ be a fully-supported ergodic invariant measure on $\Sigma_M$, and let $n_0\geq 1$ and $\varepsilon>0$. Suppose that the Lyapunov exponents of $\mu$ are unequal. Then there exist $n>n_0$ and a subset $\Gamma$ of $\{1,\ldots,M\}^n$ such that:
\begin{enumerate}[(i)]
\item\label{it:pm1}
The set $\Gamma$ has cardinality strictly greater than $e^{n(h(\mu)-\varepsilon)}$.
\item\label{it:pm2}
There exists a closed projective interval $\mathcal{K}\subset \mathbb{RP}^1$ such that for every $\mathtt{i} \in \Gamma$ the set $A_{\mathtt{i}}\mathcal{K}$ is contained in the interior of $\mathcal{K}$.
\item\label{it:pm3}
The set of matrices $\{A_{\mathtt{i}}\colon \mathtt{i}\in \Gamma\}$ has joint spectral radius at most $e^{n(\lambda_1(\mu)+\varepsilon)}$.
\item\label{it:pm4}
If $u \in \overline{u}\in\mathcal{K}$ and $\mathtt{i}\in \Gamma$ we have $\|A_{\mathtt{i}}u\|\geq e^{n(\lambda_1(\mu)-\varepsilon)}\|u\|$.
\item\label{it:pm5}
For every $\mathtt{i}\in \Gamma$ we have $e^{n(\lambda_1(\mu)+\lambda_2(\mu)-\varepsilon)} \leq |\det A_{\mathtt{i}}| \leq e^{n(\lambda_1(\mu)+\lambda_2(\mu)+\varepsilon)}$.
\item\label{it:pm6}
If $\mathtt{k}\in \{1,\ldots,M\}^{k}$ where $1 \leq k \leq n_0$, then $\mathtt{k}$ is a subword of every $\mathtt{i}\in\Gamma$.
\end{enumerate}
\end{proposition}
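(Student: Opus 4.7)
The plan is to construct $\Gamma$ as a collection of $n$-cylinders $[\mathtt{i}]$ whose representatives lie in a ``good set'' $G\subset\Sigma_M$ of $\mu$-measure close to one. The main tools are Oseledets' theorem (applied to the natural extension of $(\Sigma_M,\sigma,\mu)$), the Shannon--McMillan--Breiman theorem, Birkhoff's theorem, and Egoroff's theorem used to upgrade almost-sure statements to uniform ones.

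Since $\lambda_1(\mu)\neq\lambda_2(\mu)$, Oseledets produces measurable equivariant maps $\mathfrak{u},\mathfrak{s}\colon\Sigma_M\to\mathbb{RP}^1$ with $\mathfrak{u}(x)\neq\mathfrak{s}(x)$ for $\mu$-a.e.\ $x$. By separability of $\mathbb{RP}^1$ one picks $\overline{v}_0\in\mathbb{RP}^1$ and $r>0$ so that
\[
E_0:=\bigl\{x\colon \mathfrak{u}(x)\in B(\overline{v}_0,r/3)\text{ and }\mathfrak{s}(x)\notin B(\overline{v}_0,r)\bigr\}
\]
has positive $\mu$-measure, and sets $\mathcal{K}:=\overline{B(\overline{v}_0,r/2)}$. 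The goal is then to find a subset $G\subset E_0$ with $\mu(G)>3/4$ and a threshold $N_0\geq n_0$ such that for all $x\in G$ and all $n\geq N_0$ simultaneously: (a) $\|A(x,n)\|$ and $\|A(x,n)^{-1}\|^{-1}$ lie within $e^{\pm n\varepsilon/2}$ of $e^{n\lambda_1(\mu)}$ and $e^{n\lambda_2(\mu)}$ respectively; (b) $|\det A(x,n)|\in[e^{n(\lambda_1+\lambda_2)(\mu)-n\varepsilon/2},e^{n(\lambda_1+\lambda_2)(\mu)+n\varepsilon/2}]$; (c) $\mu([x|n])\leq e^{-n(h(\mu)-\varepsilon/2)}$; (d) every word of length $\leq n_0$ appears in $x_1\cdots x_{N_0}$; and (e) the projective map $A(x,n)$ sends $\mathcal{K}$ into $B(\mathfrak{u}(\sigma^n x),r/12)$. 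The items (a)--(d) come from Kingman's subadditive ergodic theorem, Birkhoff's theorem on $\log|\det A_{x_1}|$, Shannon--McMillan--Breiman, and Birkhoff applied to each of the finitely many short cylinders (which have positive $\mu$-measure by full support); item (e) follows from the standard angle-contraction rate $e^{n(\lambda_2-\lambda_1)}d(\overline{v},\mathfrak{s}(x))^{-2}$ for $2\times 2$ projective maps, combined with the separation $d(\mathcal{K},\mathfrak{s}(x))\geq r/2$ on $E_0$. Egoroff's theorem makes all five estimates uniform on $G$.

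Finally, fix $n\geq N_0$ large enough that $\tfrac12 e^{n(h(\mu)-\varepsilon/2)}\geq e^{n(h(\mu)-\varepsilon)}$, and note that $\mu(G\cap\sigma^{-n}G)\geq 2\mu(G)-1>1/2$. Define
\[
\Gamma:=\bigl\{\mathtt{i}\in\{1,\ldots,M\}^n\colon [\mathtt{i}]\cap G\cap\sigma^{-n}G\neq\emptyset\bigr\},
\]
and for each $\mathtt{i}\in\Gamma$ choose a witness $x\in[\mathtt{i}]\cap G\cap\sigma^{-n}G$. Since $\mathfrak{u}(\sigma^n x)\in B(\overline{v}_0,r/3)$, property (e) gives $A_{\mathtt{i}}\mathcal{K}\subset B(\overline{v}_0,r/3+r/12)\subset\mathrm{int}(\mathcal{K})$, proving (\ref{it:pm2}). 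Property (\ref{it:pm4}) follows by decomposing any $u$ with $\overline{u}\in\mathcal{K}$ in the Oseledets basis at $x$: the bounded angle between $\mathcal{K}$ and $\mathfrak{s}(x)$ forces the $\mathfrak{u}$-component of $u$ to be a definite fraction of $\|u\|$, and (a) together with the subdominant contraction along $\mathfrak{s}(x)$ yield $\|A_{\mathtt{i}}u\|\geq e^{n(\lambda_1(\mu)-\varepsilon)}\|u\|$. Property (\ref{it:pm3}) follows from the upper bound on $\|A_{\mathtt{i}}\|$ by submultiplicativity, (\ref{it:pm5}) and (\ref{it:pm6}) are immediate from (b) and (d), and the cardinality estimate (\ref{it:pm1}) follows from $\tfrac12\leq\sum_{\mathtt{i}\in\Gamma}\mu([\mathtt{i}])\leq|\Gamma|\cdot e^{-n(h(\mu)-\varepsilon/2)}$.

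The main obstacle I anticipate is the uniform projective contraction estimate (e). Oseledets supplies only pointwise convergence $A(x,n)\cdot\overline{v}\to\mathfrak{u}(\sigma^n x)$, whereas what is needed is an explicit exponential rate that is uniform simultaneously in $x\in G$ and in $\overline{v}\in\mathcal{K}$. Extracting this uniformity through the angle-contraction formula for $2\times 2$ projective maps, controlling the separation $d(\mathcal{K},\mathfrak{s}(x))$ on $E_0$, and reconciling the resulting error constants with the exponential rates in (a), (b), (c) so that a single set $G$ and threshold $N_0$ work for all the listed properties at once is the delicate technical core of the argument.
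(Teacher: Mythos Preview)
Your overall architecture matches the paper's, and the treatment of (a)--(e) is essentially correct. The genuine gap is at the return-time step. You require $G\subset E_0$ with $\mu(G)>3/4$, so as to invoke $\mu(G\cap\sigma^{-n}G)\geq 2\mu(G)-1>1/2$. But you have only arranged $\mu(E_0)>0$, and in general one cannot do better: when the distribution of $\mathfrak{u}(x)$ on $\mathbb{RP}^1$ is non-atomic (as is typical in the strongly irreducible case), the condition $\mathfrak{u}(x)\in B(\overline{v}_0,r/3)$ forces $\mu(E_0)$ to be small for small $r$, while for large $r$ the complementary condition $\mathfrak{s}(x)\notin B(\overline{v}_0,r)$ becomes restrictive. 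So $\mu(E_0)\leq 3/4$ can certainly occur, and then no subset $G\subset E_0$ has measure exceeding $3/4$; the inclusion--exclusion bound $2\mu(G)-1$ is useless (possibly negative).

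The repair, which is exactly what the paper does, is to decouple the localisation set from the Egoroff set and to replace inclusion--exclusion by a Ces\`aro argument. By ergodicity, $\tfrac{1}{n}\sum_{i=0}^{n-1}\mu(\sigma^{-i}E_0\cap E_0)\to\mu(E_0)^2$, hence $\mu(\sigma^{-n}E_0\cap E_0)>\tfrac{2}{3}\mu(E_0)^2$ for infinitely many $n$. Take the Egoroff set $Y$ for (a)--(d) with $\mu(Y)>1-\tfrac{1}{3}\mu(E_0)^2$ (noting that (e) then follows on $E_0\cap Y$ from (a), (b) and the angle separation guaranteed on $E_0$); set $X:=\sigma^{-n}E_0\cap E_0\cap Y$, which has measure at least $\kappa:=\tfrac{1}{3}\mu(E_0)^2$ along that subsequence. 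Define $\Gamma$ from $X$ as you do; the cardinality bound becomes $\kappa\leq\sum_{\mathtt{i}\in\Gamma}\mu([\mathtt{i}])\leq|\Gamma|\,e^{-n(h(\mu)-\varepsilon/2)}$, and taking $n$ additionally large enough that $\kappa>e^{-n\varepsilon/2}$ gives (\ref{it:pm1}). Everything else in your outline goes through unchanged.
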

\begin{proof}
We assume without loss of generality that
\begin{equation} \label{eq:monster0}
\varepsilon<(\lambda_1(\mu)-\lambda_2(\mu))/4.
\end{equation}
In order to apply the multiplicative ergodic theorem we require invertibility of the underlying measure-preserving transformation, so by abuse of notation we replace, for the remainder of the proof, the ergodic measure-preserving system $(\sigma,\Sigma_M,\mu)$ with its invertible natural extension.

Let $\nu$ denote the measure on $\mathbb{RP}^1 \times \mathbb{RP}^1$ given by
\[\nu(B):=\mu\left(\{x \in \Sigma_M \colon (\mathfrak{u}(x),\mathfrak{s}(x))\in B\}\right)\]
and observe that $\nu$ gives zero measure to the diagonal of $\mathbb{RP}^1\times \mathbb{RP}^1$. Let $(\overline{w_u},\overline{w_s})$ be in the support of $\nu$ with $\overline{w_u} \neq \overline{w_s}$, and choose $\delta>0$ such that
\[\left\{\overline{u} \in \mathbb{RP}^1 \colon d(\overline{u},\overline{w_u})\leq 2\delta\right\} \cap \left\{\overline{u} \in \mathbb{RP}^1 \colon d(\overline{v},\overline{w_s})\leq \delta \right\} =\emptyset.\]
Let $Z:=\{x \in \Sigma_M \colon d(\mathfrak{u}(x),\overline{w_u})\leq \delta \text{ and }d(\mathfrak{s}(x),\overline{w_s}) \leq \delta\}$. Define $\mathcal{K}:=\{\overline{u} \in \mathbb{RP}^1 \colon d(\overline{u},\overline{w_u})\leq 2\delta\}$ and $\mathcal{J}:=\{\overline{u} \in \mathbb{RP}^1 \colon d(\overline{u},\overline{w_s})\leq \delta\}$. Choose $\tau>0$ such that $\|u \wedge v\| \geq \tau$ whenever $u$ and $v$ are unit vectors with $\overline{u} \in \mathcal{K}$ and $\overline{v} \in \mathcal{J}$.

We know that for almost every $x \in \Sigma_M$,
\begin{equation}\label{eq:monsterA1}
\lim_{n \to \infty} \frac{1}{n}\log \left(\frac{\|A(x,n)u\|}{\|u\|}\right) =\lambda_1
\end{equation}
uniformly over nonzero vectors $u\in\mathfrak{u}(x)$, and
\begin{equation}\label{eq:monsterA2}
\lim_{n \to \infty} \frac{1}{n}\log \left(\frac{\|A(x,n)v\|}{\|v\|}\right)=\lambda_2
\end{equation}
uniformly over nonzero vectors $v\in\mathfrak{s}(x)$, by the multiplicative ergodic theorem; and by the Birkhoff ergodic theorem,
\begin{equation}\label{eq:monsterA3}
\lim_{n \to \infty}\frac{1}{n}\log |\det A(x,n)|=\lambda_1+\lambda_2
\end{equation}
for almost every $x \in \Sigma_M$. Since $\mu$ is fully-supported, we have for every word $\mathtt{k}$ of length at most $n_0$
\begin{equation}\label{eq:monsterA4}
\lim_{n\to\infty} \frac{1}{n} \sum_{i=0}^{n-1}\mathbf{1}_{[\mathtt{k}]}(\sigma^ix) = \mu([\mathtt{k}])>0
\end{equation}
for almost every $x \in \Sigma_M$,
and by the Shannon-McMillan-Breiman theorem
\begin{equation}\label{eq:monsterA5}
\lim_{n \to \infty} \frac{1}{n}\log \mu\left(\left[x_1\cdots x_n\right]\right)=-h(\mu)
\end{equation}
for $\mu$-a.e. $x\in \Sigma_M$. Lastly, by the subadditive ergodic theorem we have for $\mu$-a.e. $x$
\begin{equation}\label{eq:monsterA6}
\lim_{n\to\infty} \frac{1}{n}\log\|A(x,n)\| = \lambda_1(\mu).
\end{equation}

We now construct a subset of $X$ on which the above properties hold uniformly, within suitable tolerances, for a particular time $n$.
Let $\kappa:=\frac{1}{3}\mu(Z)^2$.  Since equations \eqref{eq:monsterA1}--\eqref{eq:monsterA6} converge pointwise, in particular they converge in measure. It follows that for every sufficiently large $n$ the following statements hold for all $x$ belonging to a set $Y_n$ such that $\mu(Y_n)>1-\kappa$:
\begin{equation}\label{eq:monsterB1}
 \log  \|A(x,n)u\| \geq n(\lambda_1(\mu)-\frac{\varepsilon}{2})-\log \tau
\end{equation}
and
\begin{equation}\label{eq:monsterB2}
\log \|A(x,n)v\| \leq n(\lambda_2(\mu)+\frac{\varepsilon}{2})+\log \tau
\end{equation}
for every unit vector $u \in\mathfrak{u}(x)$ and every unit vector $v \in \mathfrak{s}(x)$; and also
\begin{equation}\label{eq:monsterB3}
n(\lambda_1(\mu)+\lambda_2(\mu)-\varepsilon)\leq \log |\det A(x,n)|\leq
n(\lambda_1(\mu)+\lambda_2(\mu)+\varepsilon)
\end{equation}
\begin{equation}\label{eq:monsterB4}
\min_{|\mathtt{k}|\leq n_0}\sum_{i=0}^{n-1}\mathbf{1}_{[\mathtt{k}]}(\sigma^ix) >n_0
\end{equation}
\begin{equation}\label{eq:monsterB5}
\log \mu\left(\left[x_1\cdots x_n\right]\right)<-n\left(h(\mu)-\frac{\varepsilon}{2}\right)
\end{equation}
and
\begin{equation}\label{eq:monsterB6}
\log\|A(x,n)\| < n\left(\lambda_1(\mu)+\varepsilon\right).
\end{equation}
Since $\mu$ is ergodic we have
\[\lim_{n \to \infty} \frac{1}{n}\sum_{i=0}^{n-1}\mu(\sigma^{-i}Z \cap Z)=\mu(Z)^2,\]
so in particular we have for infinitely many $n$
\[\mu(\sigma^{-n}Z\cap Z)>\frac{2}{3}\mu(Z)^2=2\kappa.\]
In particular, for infinitely many $n$,  $\mu(\sigma^{-n}Z \cap Z \cap Y_n)>\kappa$.  For the remainder of the proof we fix an integer $n$ such that $\mu(\sigma^{-n}Z \cap Z \cap Y_n)>\kappa$ and such that additionally
\begin{equation}\label{eq:monsterB7}
e^{-\frac{n\varepsilon}{2}}<\kappa,
\end{equation}
\begin{equation}\label{eq:monsterB8}
e^{-n\varepsilon}<\delta,
\end{equation}
and
\begin{equation}\label{eq:monsterB9}
1-e^{n(\lambda_2(\mu)-\lambda_1(\mu)+\varepsilon)}>e^{-\frac{n\varepsilon}{2}}.
\end{equation}
Let $X:=\sigma^{-n}Z \cap Z \cap Y_n$ and define
\[\Gamma:=\left\{\mathtt{i} \in \{1,\ldots,M\}^n \colon \mu([\mathtt{i}]\cap X)>0\right\}.\]
Clearly $\Gamma$ is nonempty.

We now demonstrate that $\Gamma$ has the properties required in the statement of the proposition, beginning with those which are most easily established. We first estimate the cardinality of $\Gamma$. Clearly
\[\mu\left(\bigcup_{\mathtt{i}\in \Gamma} [\mathtt{i}]\right) \geq \mu(X)>\kappa>e^{-\frac{n\varepsilon}{2}},\]
using \eqref{eq:monsterB7}, and it follows from \eqref{eq:monsterB5} that
\[\mu([\mathtt{i}])<e^{-n\left(h(\mu)-\frac{\varepsilon}{2}\right)}\]
for all $\mathtt{i}\in \Gamma$. Combining these observations yields
\[e^{-\frac{n\varepsilon}{2}}< \sum_{\mathtt{i}\in \Gamma}\mu([\mathtt{i}]) \leq e^{-n\left(h(\mu)-\frac{\varepsilon}{2}\right)}\#\Gamma\]
which is to say $\#\Gamma> e^{n(h(\mu)-\varepsilon)}$, and we have established \eqref{it:pm1}.

It follows from \eqref{eq:monsterB6} that for all $\mathtt{i}\in\Gamma$ we have $\|A_{\mathtt{i}}\|\leq e^{n(\lambda_1(\mu)+\varepsilon)}$, and by the definition of joint spectral radius this implies that the  joint spectral radius of $\{A_{\mathtt{i}}\colon \mathtt{i}\in\Gamma\}$ is at most $e^{n(\lambda_1(\mu)+\varepsilon)}$, which is \eqref{it:pm3}. In view of \eqref{eq:monsterB3} we have $e^{n(\lambda_1(\mu)+\lambda_2(\mu)-\varepsilon)}\leq |\det A_{\mathtt{i}}|\leq e^{n(\lambda_1(\mu)+\lambda_2(\mu)+\varepsilon)}$ which is \eqref{it:pm5}. We may also easily establish \eqref{it:pm6}: given a word $\mathtt{k}$ of length $n_0$ and a word $\mathtt{i}=i_1\cdots i_n \in \Gamma$, there exists $x \in [\mathtt{i}]\cap X$. Using \eqref{eq:monsterB4} there exists an integer $i$ such that $0 \leq i <n-n_0$ and $\mathbf{1}_{[\mathtt{k}]}(\sigma^ix)=1$, and this shows that $\mathtt{k}$ is a subword of $\mathtt{i}$ as claimed.

It remains to bound from below the growth of vectors in $\mathcal{K}$ and to show that the matrices $\{A_{\mathtt{i}}\colon \mathtt{i}\in\Gamma\}$ strictly preserve a cone, establishing points \eqref{it:pm4} and \eqref{it:pm2} respectively. We claim that
\begin{equation}\label{eq:monsterC1}\|A(x,n)u\| \geq e^{n\left(\lambda_1(\mu)-\varepsilon\right)}\|u\|\end{equation}
when $\overline{u} \in \mathcal{K}$ and $x \in X$. To see this we note that $\mathfrak{u}(x)\neq \mathfrak{s}(x)$, and therefore we may find unit vectors $v_u \in \mathfrak{u}(x)$, $v_s\in\mathfrak{s}(x)$ and real numbers $\beta,\gamma$ such that $u=\beta v_u + \gamma v_s$. We observe that
\[|\beta| = \frac{\|u \wedge v_s\|}{\|v_u \wedge v_s\|}\geq \|u\wedge v_s\|\geq \tau\|u\|\]
and
\[|\gamma| = \frac{\|u \wedge v_u\|}{\|v_u\wedge v_s\|} \leq \frac{\|u\|}{\|v_u\wedge v_s\|}\leq \frac{\|u\|}{\tau}\]
using the defining property of $\tau$ together with the fact that $\mathfrak{s}(x) \in \mathcal{J}$ and $\mathfrak{u}(x), \overline{u}\in \mathcal{K}$. We deduce that
\begin{align*}\|A(x,n)u\| &\geq \tau \|A(x,n)v_u\| - \frac{1}{\tau} \|A(x,n)v_s\|\\
&\geq \left(e^{n\left(\lambda_1(\mu)-\frac{\varepsilon}{2}\right)} -  e^{n\left(\lambda_2(\mu)+\frac{\varepsilon}{2}\right)}\right)\|u\|\\
&=e^{n\left(\lambda_1(\mu)-\frac{\varepsilon}{2}\right)} \left(1-e^{n(\lambda_2(\mu)-\lambda_1(\mu)+\varepsilon)}\right)\|u\|\\
&\geq e^{n\left(\lambda_1(\mu)-\varepsilon\right)} \|u\|\end{align*}
using \eqref{eq:monsterB1}, \eqref{eq:monsterB2} and \eqref{eq:monsterB9}, which proves the claim. Given $\mathtt{i} \in \Gamma$, applying the claim to any $x \in [\mathtt{i}] \cap X$ establishes \eqref{it:pm4}, since in this case $A(x,n)=A_{\mathtt{i}}$.

Now let  $\overline{u} \in \mathcal{K}$ and $x \in X \subset Z$. We may estimate
\begin{align*}
d\left(\overline{A(x,n)u},\mathfrak{u}(T^nx)\right)&=d\left(\overline{A(x,n)u},\overline{A(x,n)v_u}\right)&\\
&=\frac{\|A(x,n)u\wedge A(x,n)v_u\|}{\|A(x,n)u\|\cdot \|A(x,n)v_u\|} &\\
&\leq \frac{|\det A(x,n)| \cdot \|u\wedge v_u\|}{e^{n(2\lambda_1(\mu)-2\varepsilon)}\|u\|\cdot\|v_u\|} & (\text{by } \eqref{eq:monsterB1} )\\
&\leq e^{n(\lambda_2(\mu)-\lambda_1(\mu)+3\varepsilon)} d(\overline{u},\mathfrak{u}(x)) & (\text{by } \eqref{eq:monsterB3})\\
&\leq e^{-n\varepsilon} d(\overline{u},\mathfrak{u}(x))<\delta & (\text{by } \eqref{eq:monster0}, \eqref{eq:monsterB8}).
\end{align*}
Since $x \in X$ we have  $T^nx \in Z$ so that $d(\mathfrak{u}(T^nx),\overline{w_u})\leq \delta$, and therefore
\[d(\overline{A(x,n)u},\overline{w_u})\leq d(\overline{A(x,n)u},A(x,n)\mathfrak{u}(x)) +d(\mathfrak{u}(T^nx),\overline{w_u})<2\delta.\]
We have shown in particular that if $x \in X$ and $u\in\overline{u}\in \mathcal{K}$ then $A(x,n)\overline{u}\in \mathrm{Int} \mathcal{K}$. It follows that for any given $\mathtt{i}\in \Gamma$, if $x \in X \cap [\mathtt{i}]$, then the matrix $A(x,n)=A_{\mathtt{i}}$ maps $\mathcal{K}$ into the interior of $\mathcal{K}$, and we have proved \eqref{it:pm2}. The proof of the proposition is complete.\end{proof}
To obtain the full strength of Theorem \ref{th:monster} from the above proposition we require several further lemmas:
\begin{lemma}
Let $A_1,\ldots,A_M$, $\mu$, $n_0$ and $\varepsilon>0$ be as in the statement of Proposition \ref{pr:submonster}. Then the set $\Gamma$ in the conclusion of Proposition \ref{pr:submonster} may be chosen such that for every $A_{\mathtt{i}}$ we have $\det A_{\mathtt{i}}>0$, and $A_{\mathtt{i}}$ strictly preserves a cone $\mathcal{C}$ not depending on $\mathtt{i}\in\Gamma$.
\end{lemma}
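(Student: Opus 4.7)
The plan is to apply Proposition \ref{pr:submonster} with tolerance $\varepsilon/2$ and with $n_0$ replaced by $\max(n_0,\lceil\log 16/\varepsilon\rceil)$, obtaining a set $\Gamma_0\subset\{1,\ldots,M\}^n$ and a closed projective interval $\mathcal{K}\subset\mathbb{RP}^1$ such that $A_{\mathtt{i}}\mathcal{K}\subset\mathrm{int}\,\mathcal{K}$ for all $\mathtt{i}\in\Gamma_0$. Let $\mathcal{C}$ be one of the two connected components of $\{u\in\mathbb{R}^2\setminus\{0\}:\overline{u}\in\mathcal{K}\}$, so that the other is $-\mathcal{C}$. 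For each $\mathtt{i}\in\Gamma_0$, the set $A_{\mathtt{i}}\mathcal{C}$ is connected and contained in $\mathrm{int}\,\mathcal{C}\cup\mathrm{int}(-\mathcal{C})$, so it lies entirely in exactly one of these two components; record this choice by $\sigma_2(\mathtt{i})\in\{\pm 1\}$, and set $\sigma_1(\mathtt{i}):=\mathrm{sgn}(\det A_{\mathtt{i}})$. These are two independent $\mathbb{Z}/2$-valued invariants of $\mathtt{i}$.

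By pigeonhole there is a subset $\Gamma_1\subset\Gamma_0$ with $|\Gamma_1|\geq|\Gamma_0|/4$ on which both $\sigma_1$ and $\sigma_2$ take constant values, which we also denote $\sigma_1,\sigma_2\in\{\pm 1\}$. Define $\Gamma:=\{\mathtt{i}\mathtt{j}:\mathtt{i},\mathtt{j}\in\Gamma_1\}\subset\{1,\ldots,M\}^{2n}$; since a word of length $2n$ decomposes uniquely into two halves of length $n$, $|\Gamma|=|\Gamma_1|^2$. Recalling the convention $A_{\mathtt{i}\mathtt{j}}=A_{\mathtt{j}}A_{\mathtt{i}}$, for every $\mathtt{i}\mathtt{j}\in\Gamma$ the determinant $\det A_{\mathtt{i}\mathtt{j}}=\det A_{\mathtt{i}}\det A_{\mathtt{j}}=\sigma_1^2|\det A_{\mathtt{i}}||\det A_{\mathtt{j}}|$ is positive, and
\[
A_{\mathtt{i}\mathtt{j}}\mathcal{C}=A_{\mathtt{j}}(A_{\mathtt{i}}\mathcal{C})\subset A_{\mathtt{j}}(\sigma_2\,\mathrm{int}\,\mathcal{C})=\sigma_2 A_{\mathtt{j}}(\mathrm{int}\,\mathcal{C})\subset\sigma_2^2\,\mathrm{int}\,\mathcal{C}=\mathrm{int}\,\mathcal{C},
\]
so $A_{\mathtt{i}\mathtt{j}}$ strictly preserves the single fixed cone $\mathcal{C}$.

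It remains to verify that $\Gamma$ still satisfies the six conclusions of Proposition \ref{pr:submonster}, now with length $2n$ and tolerance $\varepsilon$. The cardinality bound $|\Gamma|=|\Gamma_1|^2\geq(|\Gamma_0|/4)^2\geq e^{2n(h(\mu)-\varepsilon/2)}/16\geq e^{2n(h(\mu)-\varepsilon)}$ holds by the lower bound on $n$, which makes $e^{n\varepsilon}\geq 16$. The joint spectral radius, determinant, and subword conditions are immediate from submultiplicativity of the norm, multiplicativity of $\det$, and the fact that every word of length at most $n_0$ already appears as a subword of every $\mathtt{i}\in\Gamma_0$. The vector growth estimate for $\overline{u}\in\mathcal{K}$ follows by iterating the bound $\|A_{\mathtt{i}}u\|\geq e^{n(\lambda_1(\mu)-\varepsilon/2)}\|u\|$ twice, legitimate because $A_{\mathtt{i}}\overline{u}$ remains in $\mathcal{K}$. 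The one substantive observation is that positivity of $\det A_{\mathtt{i}}$ and preservation of a single fixed component of $\mathcal{C}\cup(-\mathcal{C})$ are two independent $\mathbb{Z}/2$-obstructions, both trivialised simultaneously by pairing within a monochromatic pigeonhole class; I expect this decoupling to be the main subtlety, since neither condition can in general be arranged by simply selecting a subset of $\Gamma_0$ without subsequently pairing.
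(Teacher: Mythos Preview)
Your proof is correct and follows essentially the same approach as the paper: apply Proposition \ref{pr:submonster} with tolerance $\varepsilon/2$, pigeonhole $\Gamma_0$ into four classes according to the sign of the determinant and which component of $\mathcal{C}\cup(-\mathcal{C})$ is preserved, then take $\Gamma=\{\mathtt{i}\mathtt{j}:\mathtt{i},\mathtt{j}\in\Gamma_1\}$ for a single pigeonhole class $\Gamma_1$. Your device of enlarging $n_0$ to force $e^{n\varepsilon}\geq 16$ is exactly equivalent to the paper's stipulation that $n$ be ``chosen large enough that $e^{-n\varepsilon/2}<\frac{1}{16}$'' (which implicitly also relies on the freedom to take $n$ large via $n_0$).
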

\begin{proof}
Let $\Gamma$ be the set constructed by Proposition \ref{pr:submonster} with $\varepsilon/2$ in place of $\varepsilon$, and with $n$ chosen large enough that $e^{-n\varepsilon/2}<\frac{1}{16}$. We will find an integer $n'>n$ and a set $\Gamma'\subseteq \{1,\ldots,M\}^{n'}$ such that all of the conclusions of Proposition \ref{pr:submonster} hold, and such that $\det A_{\mathtt{i}}>0$ for all $\mathtt{i}\in \Gamma'$. Let $\mathcal{K}$ be the projective interval in Proposition \ref{pr:submonster}, and let $\mathcal{C}_1$, $\mathcal{C}_2$ denote the two connected components of the set $\{u \in \mathbb{R}^2 \setminus \{0\} \colon \overline{u}\in\mathcal{K}\}$.  For each $\mathtt{i} \in \Gamma$, by linearity we either have $A_{\mathtt{i}}\mathcal{C}_i\subset \mathcal{C}_i$ for $i=1,2$, or $A_{\mathtt{i}}\mathcal{C}_{3-i}\subset \mathcal{C}_i$ for $i=1,2$.

Choose a subset $\Gamma_0$ of $\Gamma$ such that $\#\Gamma_0 \geq \frac{1}{4}\#\Gamma$, such that $\det A_{\mathtt{i}}$ has the same sign for every $\mathtt{i}\in\Gamma_0$, and either such that every matrix $A_{\mathtt{i}}$ preserves both of the two cones $\mathcal{C}_i$, or such that every matrix $A_{\mathtt{i}}$ interchanges the two cones $\mathcal{C}_i$. Define $\Gamma':=\{\mathtt{i}\mathtt{j}\colon \mathtt{i},\mathtt{j}\in \Gamma_0\}$ and $n':=2n$. Clearly for every $\mathtt{i} \in \Gamma'$ we have $\det A_{\mathtt{i}}>0$ and $A_{\mathtt{i}}$ maps $\mathcal{C}_1$ into its own interior. Clearly
\[\#\Gamma' \geq \frac{1}{16} \#\Gamma  \geq \frac{1}{16}e^{-2n(h(\mu)-\frac{\varepsilon}{2})} >e^{-2n(h(\mu)-\varepsilon)}=e^{-n'(h(\mu)-\varepsilon)}\]
using Proposition \ref{pr:submonster}\eqref{it:pm1}, and $\Gamma'$ may be easily seen to inherit all of the other properties listed in Proposition \ref{pr:submonster} as required.
\end{proof}
The above lemma completes the proof of the Theorem in the case where $(A_i)_{i=1}^M$ are not assumed to be strongly irreducible. Before treating the strongly irreducible case, we require an additional lemma:
\begin{lemma}\label{le:pablo}
Let $A_1,\ldots,A_M\in GL_2(\mathbb{R})$ be strongly irreducible, and let $\overline{u},\overline{s} \in \mathbb{RP}^1$ be the unstable and stable directions of a hyperbolic matrix $A_{\mathtt{i}}$. Then there exist $m \geq 1$ and $\mathtt{k}\in\{1,\ldots,M\}^m$ such that $A_{\mathtt{k}}\overline{u},A_{\mathtt{k}}\overline{s} \notin \{\overline{u},\overline{s}\}$.\end{lemma}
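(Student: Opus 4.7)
The plan is to diagonalize the hyperbolic matrix $A_\mathtt{i}$, reduce the statement to the non-vanishing of all entries of the target matrix in this basis, and then combine two existence results via the rank-one dominance of $A_\mathtt{i}^n$ for large $n$. Choose coordinates on $\mathbb{R}^2$ so that $\overline{u}$ and $\overline{s}$ are the two coordinate axes; then $A_\mathtt{i}=\mathrm{diag}(\alpha,\beta)$ with $|\alpha|>|\beta|>0$. Writing $A_\mathtt{k}=\left(\begin{smallmatrix}a_\mathtt{k}&b_\mathtt{k}\\c_\mathtt{k}&d_\mathtt{k}\end{smallmatrix}\right)$ in this basis, one sees that $A_\mathtt{k}\overline{u}\in F:=\{\overline{u},\overline{s}\}$ iff $a_\mathtt{k} c_\mathtt{k}=0$, and $A_\mathtt{k}\overline{s}\in F$ iff $b_\mathtt{k} d_\mathtt{k}=0$; the conclusion is therefore equivalent to finding a word $\mathtt{k}$ such that all four entries of $A_\mathtt{k}$ are nonzero. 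Let $G$ be the semigroup generated by $A_1,\dots,A_M$.

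Claim 1: there is a word $\mathtt{k}_1$ with $a_{\mathtt{k}_1},c_{\mathtt{k}_1}\neq 0$, i.e.\ $A_{\mathtt{k}_1}\overline{u}\notin F$. Suppose instead $G\overline{u}\subseteq F$. By irreducibility not every generator can fix $\overline{u}$, so some $A_j$ satisfies $A_j\overline{u}=\overline{s}$; then for every generator $A_{j'}$ we get $A_{j'}\overline{s}=A_{j'}A_j\overline{u}\in F$, and an immediate induction on word length yields $G\overline{s}\subseteq F$. Hence $F$ is a finite $G$-invariant union of lines, contradicting strong irreducibility.

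Claim 2: there is a word $\mathtt{k}_2$ with $a_{\mathtt{k}_2},b_{\mathtt{k}_2}\neq 0$, i.e.\ $A_{\mathtt{k}_2}\overline{u}\neq\overline{s}$ and $A_{\mathtt{k}_2}\overline{s}\neq\overline{s}$. Suppose the dichotomy $(\ast)$ ``every $g\in G$ satisfies $g\overline{u}=\overline{s}$ or $g\overline{s}=\overline{s}$'' holds. The subsemigroup $G_b:=\{g\in G:g\overline{s}=\overline{s}\}$ cannot equal $G$ (else $\overline{s}$ would be commonly fixed, contradicting irreducibility), so choose $g\in G\setminus G_b$: then $g\overline{u}=\overline{s}$ and $g\overline{s}\neq\overline{s}$. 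Computing $g^2\overline{u}=g\overline{s}\neq\overline{s}$ forces, via $(\ast)$, that $g^2\overline{s}=\overline{s}$, i.e.\ $g(g\overline{s})=\overline{s}=g\overline{u}$; injectivity of $g$ on $\mathbb{RP}^1$ then gives $g\overline{s}=\overline{u}$, so $g$ swaps $\overline{u}\leftrightarrow\overline{s}$. Now take $g_\ast:=A_{\mathtt{k}_1}$; since $g_\ast\overline{u}\notin F$, $(\ast)$ forces $g_\ast\overline{s}=\overline{s}$. Consequently $(gg_\ast)\overline{s}=g\overline{s}=\overline{u}\neq\overline{s}$, so $(\ast)$ demands $g(g_\ast\overline{u})=\overline{s}$; applying $g^{-1}$ gives $g_\ast\overline{u}=\overline{u}\in F$, contradicting the choice of $g_\ast$.

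To conclude, for each $n\geq 1$ a direct computation gives
\[A_{\mathtt{k}_1}A_\mathtt{i}^n A_{\mathtt{k}_2}=\alpha^n\begin{pmatrix}a_{\mathtt{k}_1}\\c_{\mathtt{k}_1}\end{pmatrix}\begin{pmatrix}a_{\mathtt{k}_2}&b_{\mathtt{k}_2}\end{pmatrix}+\beta^n\begin{pmatrix}b_{\mathtt{k}_1}\\d_{\mathtt{k}_1}\end{pmatrix}\begin{pmatrix}c_{\mathtt{k}_2}&d_{\mathtt{k}_2}\end{pmatrix}.\]
The leading rank-one matrix has every entry nonzero by Claims 1 and 2, and $|\beta/\alpha|^n\to 0$, so every entry of $A_{\mathtt{k}_1}A_\mathtt{i}^n A_{\mathtt{k}_2}$ is nonzero for all sufficiently large $n$. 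Taking $\mathtt{k}:=\mathtt{k}_2\mathtt{i}^n\mathtt{k}_1$ (so that $A_\mathtt{k}=A_{\mathtt{k}_1}A_\mathtt{i}^n A_{\mathtt{k}_2}$ under the paper's concatenation convention) completes the proof. The main obstacle is Claim 2: the hypothetical configuration in which every semigroup element sends at least one of $\overline{u},\overline{s}$ to $\overline{s}$ cannot be ruled out by a direct invocation of strong irreducibility, and genuinely requires the swap element extracted via $g^2$ together with a tailored composition with the element from Claim 1.
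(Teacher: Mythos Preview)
Your proof is correct and follows essentially the same three-step architecture as the paper: find a word sending $\overline{u}$ outside $\{\overline{u},\overline{s}\}$ (your Claim~1, the paper's $\mathtt{j}'$), find a word sending neither $\overline{u}$ nor $\overline{s}$ to $\overline{s}$ (your Claim~2, the paper's $\mathtt{j}$), and sandwich a high power of $A_{\mathtt{i}}$ between them. The differences are in presentation and in one sub-argument. You diagonalise and phrase everything in terms of matrix entries, which makes the final rank-one dominance step very explicit; the paper stays coordinate-free and uses projective convergence $A_{\mathtt{i}}^\ell(\cdot)\to\overline{u}$ instead. More substantively, your proof of Claim~2 is different: the paper invokes strong irreducibility to produce two words $\mathtt{j}_1,\mathtt{j}_2$ with $A_{\mathtt{j}_1}\overline{s},A_{\mathtt{j}_2}\overline{s}$ distinct and both $\neq\overline{s}$, then derives a contradiction via injectivity of a single generator; you instead extract a swap element $g$ via the $g^2$ trick and combine it with the element from Claim~1. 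Your route has the mild advantage of reusing Claim~1 rather than making a separate appeal to the infinitude of the orbit of $\overline{s}$, while the paper's route avoids having to prove Claim~1 first. Both are short and valid.
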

\begin{proof}

Firstly, we claim that, as a consequence of strong irreducibility, there exists $\mathtt{j}$ such that $\overline{s}\notin \{ A_{\mathtt{j}}\overline{u}, A_{\mathtt{j}}\overline{s}\}$. Indeed, suppose this is not the case. By strong irreducibility, there exist words $\mathtt{j}_1,\mathtt{j}_2$ such that $A_{\mathtt{j}_i}\overline{s}\neq \overline{s}$ (so that $A_{\mathtt{j}_i}\overline{u}=\overline{s}$), and $A_{\mathtt{j}_1}\overline{s}\neq A_{\mathtt{j}_2}\overline{s}$. Let $A_p$ be any matrix which does not fix $\overline{s}$. Then $A_p A_{\mathtt{j}_i}\overline{u}=A_p \overline{s}\neq\overline{s}$, so we must have $A_p A_{\mathtt{j}_i}\overline{s}=\overline{s}$ for $i=1,2$. This contradicts the injectivity of the action of $A_p$ on $\mathbb{RP}^1$.

On the other hand, by strong irreducibility there exists $\mathtt{j}'$ such that $A_{\mathtt{j}'}\overline{u}\notin\{\overline{u},\overline{s}\}$. Since $A_{\mathtt{i}}$ is hyperbolic we have $\lim_{\ell \to \infty} A_{\mathtt{i}}^\ell A_{\mathtt{j}}\overline{u}=\lim_{\ell \to \infty} A_{\mathtt{i}}^\ell A_{\mathtt{j}}\overline{s}=\overline{u}$ and therefore $\lim_{\ell \to \infty} A_{\mathtt{j}'}A_{\mathtt{i}}^\ell A_{\mathtt{j}}\overline{u}=A_{\mathtt{j}'}A_{\mathtt{i}}^\ell A_{\mathtt{j}}\overline{s}=A_{\mathtt{j}'}\overline{u}\notin \{\overline{u},\overline{s}\}$. It follows that if $\ell$ is sufficiently large then $\mathtt{k} := \jj' \ii^\ell \jj$ satisfies $A_{\mathtt{k}}\overline{u},A_{\mathtt{k}}\overline{s} \notin \{\overline{u},\overline{s}\}$ as desired.
\end{proof}
The remaining case is dealt with by the following lemma:
\begin{lemma}
Let $A_1,\ldots,A_M$, $\mu$, $n_0$ and $\varepsilon>0$ be as in the statement of Theorem \ref{th:monster}, and suppose that $\Gamma \subset \{1,\ldots,M\}^n$ satisfies all of the conclusions of Theorem \ref{th:monster} except possibly \eqref{it:tm5}, and with $\varepsilon/2$ in place of $\varepsilon$. If $(A_1,\ldots,A_M)$ is strongly irreducible, then there exist $n'>n$ and $\Gamma'\subset \{1,\ldots,M\}^{n'}$ for which all of the conclusions of Theorem \ref{th:monster} are satisfied.
\end{lemma}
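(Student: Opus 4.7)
The plan is to upgrade $\Gamma$ to a new set $\Gamma'$ by concatenating blocks from $\Gamma$ with a specially constructed ``twist'' word $\ii^*$ whose purpose is to destroy any common invariant finite union of lines, while passing to a sufficiently long iterate so that the fixed quantitative cost of the twist becomes negligible.

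First, if $\{A_\ii : \ii\in\Gamma\}$ is already strongly irreducible then $\Gamma' := \Gamma$ works. Otherwise, each $A_\ii$ ($\ii\in\Gamma$) strictly preserves $\mathcal{C}$, so it is hyperbolic with unstable direction $\overline{u}_\ii\in\mathrm{int}(\mathcal{K})$ and stable direction $\overline{s}_\ii\notin\mathcal{K}$. A short argument shows that any common invariant finite set of lines $\mathcal{L}$ for $\{A_\ii:\ii\in\Gamma\}$ has cardinality at most two and is pointwise fixed by every $A_\ii$: since $A_\ii\mathcal{K}$ lies strictly inside $\mathcal{K}$, a line of $\mathcal{L}\cap\mathcal{K}$ cannot be mapped to a line of $\mathcal{L}\setminus\mathcal{K}$, and bijectivity of $A_\ii$ on $\mathcal{L}$ then forces both lines to be fixed. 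Consequently, either all $\ii\in\Gamma$ share a common unstable $\overline{u}^*\in\mathcal{K}$, or they share a common stable $\overline{s}^*\notin\mathcal{K}$; by symmetry I assume the former.

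Next, fix $\ii_0\in\Gamma$ and apply Lemma \ref{le:pablo} to $A_{\ii_0}$, obtaining a word $\mathtt{k}$ with $A_\mathtt{k}\overline{u}_{\ii_0}, A_\mathtt{k}\overline{s}_{\ii_0}\notin\{\overline{u}_{\ii_0}, \overline{s}_{\ii_0}\}$ (replacing $\mathtt{k}$ by $\mathtt{k}\mathtt{k}$ if necessary to ensure $\det A_\mathtt{k}>0$). For large $L$, set $\ii^* := \ii_0^L\mathtt{k}\ii_0^L$; then $A_{\ii^*} = A_{\ii_0}^L A_\mathtt{k} A_{\ii_0}^L$ strictly preserves $\mathcal{C}$ (the inner $A_{\ii_0}^L$ pushes $\mathcal{C}$ into a small neighbourhood of $\overline{u}^*$, $A_\mathtt{k}$ displaces this into a neighbourhood of $A_\mathtt{k}\overline{u}^*\notin\{\overline{u}^*, \overline{s}_{\ii_0}\}$, and the outer $A_{\ii_0}^L$ returns it to the interior of $\mathcal{K}$), and it satisfies $A_{\ii^*}\overline{u}^*\neq\overline{u}^*$. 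For a further large integer $K$ I then define
\[
\Gamma' := \{\ii_1\cdots\ii_K\ii^* : \ii_j\in\Gamma\} \cup \{\ii^*\ii_1\cdots\ii_K : \ii_j\in\Gamma\} \subset \{1,\ldots,M\}^{n'},
\]
where $n' := Kn + |\ii^*|$. All conclusions of Theorem \ref{th:monster} except \eqref{it:tm5}, with parameter $\varepsilon$, are inherited from the corresponding conclusions for $\Gamma$ with parameter $\varepsilon/2$: the contribution of the fixed block $\ii^*$ to the entropy, joint and lower spectral radii, determinant and cone preservation becomes negligible once $K$ is large enough, and the subword property persists since each $\ii_j$-block already contains every word of length at most $n_0$.

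The delicate step is verifying strong irreducibility. Working in the $\overline{u}^*$-basis, the matrices $A_\ii$ ($\ii\in\Gamma$) are upper triangular with dominant eigendirection $\overline{u}^*$, so the product $A_{\ii_K}\cdots A_{\ii_1}$ acts for large $K$ as a near-projection onto $\overline{u}^*$. Consequently, a suffix-type element $\alpha=\ii_1\cdots\ii_K\ii^*$ of $\Gamma'$ has unstable direction $\overline{u}_\alpha$ close to $A_{\ii^*}\overline{u}^*$, while a prefix-type element $\alpha'=\ii^*\ii_1\cdots\ii_K$ has unstable direction close to $\overline{u}^*$. Since $A_{\ii^*}\overline{u}^*\neq\overline{u}^*$, for $K$ sufficiently large $\overline{u}_\alpha\neq\overline{u}_{\alpha'}$; a parallel computation produces $\alpha,\alpha'\in\Gamma'$ with distinct stable directions. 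By the dichotomy of the second paragraph, now applied to $\Gamma'$, any finite invariant set of lines would have to lie in $\{\overline{u}_\alpha,\overline{s}_\alpha\}\cap\{\overline{u}_{\alpha'},\overline{s}_{\alpha'}\}$, which is empty because unstables lie in $\mathcal{K}$ and stables outside, precluding cross-coincidences. The main obstacle will be making this asymptotic analysis precise enough to produce simultaneously distinct unstables \emph{and} distinct stables across prefix and suffix types, for which a careful choice of $\ii_1,\ldots,\ii_K\in\Gamma$ and a careful parameter chase are required.
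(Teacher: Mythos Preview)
Your overall strategy is in the right spirit, but it diverges from the paper's argument and, as you yourself flag at the end, it is incomplete at the crucial step.

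The paper proceeds more simply. It fixes a single $\ii\in\Gamma$, uses Lemma~\ref{le:pablo} to find $\mathtt{k}$, and builds a single word $\mathtt{j}=(\ii^{m_2+m_3}\mathtt{k}\ii^{m_1})^{2n}$ of length $n'=nk$. It then sets $\Gamma':=\{\ii_1\cdots\ii_k:\ii_j\in\Gamma\}\cup\{\mathtt{j}\}$. The quantitative estimates are checked for $\mathtt{j}$ alone, and strong irreducibility follows in one line: $A_{\ii}^k$ and $A_{\mathtt{j}}$ are two hyperbolic matrices with no common eigenspace, so no line can have a finite orbit under both. There is no need for an asymptotic comparison of unstable and stable directions across a family of words.

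Your construction instead attaches the twist block $\ii^*$ to every $K$-block, once as a suffix and once as a prefix, and relies on an asymptotic analysis of how the unstable (and then stable) directions of those two families separate. Two concrete problems remain. First, the phrase ``by symmetry'' is not justified: there is no involution on the setup that swaps the roles of unstable and stable directions while preserving cone preservation, so the ``common stable'' case is not the mirror image of the one you treat. Second, and more seriously, to conclude strong irreducibility of $\Gamma'$ you must rule out \emph{both} a common unstable and a common stable direction; you handle the unstable side but only gesture at the stable side. In the situation where the original $\Gamma$ already shares both an unstable $\overline u^*$ and a stable $\overline s^*$, your prefix and suffix families both begin (in the matrix-product order) with a factor $A_{\ii_0}^L$ coming from $\ii^*=\ii_0^L\mathtt{k}\ii_0^L$ or with some $A_{\ii_j}$, all of which have stable $\overline s^*$; extracting two elements with distinct stable directions then requires a genuine computation that you have not carried out. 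The paper's device of adjoining a single carefully chosen $\mathtt{j}$ and checking $A_{\mathtt{j}}\overline u\neq\overline u$, $A_{\mathtt{j}}\overline s\neq\overline s$ directly avoids this difficulty entirely.
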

\begin{proof}Let $\mathcal{C} \subset \mathbb{R}^2$ be a cone which is strictly preserved  by every element of $\{A_{\mathtt{i}}\colon \mathtt{i}\in \Gamma\}$, and let $\mathcal{K}$ denote the projective image of $\mathcal{C}$. We note that each $A_{\mathtt{i}}$ contracts $\mathcal{K}$ with respect to the angle-sine metric $d$, and therefore the projective transformation $A_{\mathtt{i}}$ has a unique fixed point in  $\mathcal{K}$ which is an attractor for the projective transformation. In particular, the unstable eigenspace of every $A_{\mathtt{i}}$ lies in $\mathcal{K}$, and the stable eigenspace of every $A_{\mathtt{i}}$ does not lie in $\mathcal{K}$.

Choose $\mathtt{i} \in \Gamma$ arbitrarily, and note that since $A_{\mathtt{i}}$ strictly preserves the cone $\mathcal{C}$, it is hyperbolic by virtue of the Perron-Frobenius Theorem. Let $\overline{u}, \overline{s}\in \mathbb{RP}^1$ be respectively the unstable and stable eigenspaces of $A_{\mathtt{i}}$. By Lemma \ref{le:pablo} there exist an integer $m_0$ and a finite word $\mathtt{k} \in \{1,\ldots,M\}^{m_0}$, which in general will not belong to $\Gamma$, such that $A_{\mathtt{k}}\overline{u} \notin \{\overline{u},\overline{s}\}$ and $A_{\mathtt{k}}\overline{s}\neq\overline{s}$. Since clearly
\[\bigcap_{m=1}^\infty A_{\mathtt{k}}A_{\mathtt{i}}^m\mathcal{K} = \left\{A_{\mathtt{k}}\overline{u}\right\}\]
and this sequence of sets is nested, we may choose an integer $m_1 \geq 1$ such that $A_{\mathtt{k}}A_{\mathtt{i}}^{m_1}\mathcal{K}$ does not intersect $\{\overline{u},\overline{s}\}$. In a similar manner, if $m_2$ is sufficiently large then $A_{\mathtt{i}}^{m_2}A_{\mathtt{k}}A_{\mathtt{i}}^{m_1}\mathcal{K}$ is contained in the interior of $\mathcal{K}$. Choose $m_2$ with this property. Now let $m_3$ be an integer which is large enough that additionally
\begin{align*}e^{(m_0+nm_1+nm_2+nm_3)(\lambda_1(\mu)-\varepsilon)} &\leq \det A_{\mathtt{i}}^{m_1+m_2+m_3}|\det A_{\mathtt{k}}|\\
& \leq e^{(m_0+nm_1+nm_2+nm_3)(\lambda_1(\mu)+\varepsilon)},\end{align*}
where we have used Theorem \ref{th:monster}\eqref{it:tm4}. If $m_3$ is sufficiently large then it is also clear that
\begin{align*}\|A_{\mathtt{i}}^{m_2+m_3}A_{\mathtt{k}}A_{\mathtt{i}}^{m_1}\| &\leq e^{n(m_1+m_2+m_3)(\lambda_1(\mu)+\frac{\varepsilon}{2})} \left(\max_{1 \leq i \leq M}\|A_i\|\right)^{m_0}\\
 &\leq e^{(m_0+nm_1+nm_2+nm_3)(\lambda_1(\mu)+\varepsilon)}\end{align*}
using \eqref{it:tm4}, and using \eqref{it:tm2b}, if $m_3$ is sufficiently large then for all $\overline{u}\in\mathcal{K}$ we have
\begin{align*}\|A_{\mathtt{i}}^{m_2+m_3}A_{\mathtt{k}}A_{\mathtt{i}}^{m_1}u\| &\geq \|A_{\mathtt{i}}^{m_3}\|\alpha_2(A_{\mathtt{i}}^{m_2}A_{\mathtt{k}}A_{\mathtt{i}}^{m_1})\|u\|\\
&\geq e^{nm_3(\lambda_1(\mu)-\frac{\varepsilon}{2})}\left(\min_{1\leq i \leq M}\alpha_2(A_i)\right)^{m_0+nm_1+nm_2} \|u\|\\&\geq e^{(n(m_1+m_2+m_3)+m_0)(\lambda_1(\mu)-\varepsilon)}\|u\|,\end{align*}
where we have used the fact that $A_{\mathtt{i}}^{m_2}A_{\mathtt{k}}A_{\mathtt{i}}^{m_1}\overline{u}\in \mathcal{K}$ and the fact that $A_{\mathtt{i}}$ preserves  $\mathcal{K}$. The matrix $A_{\mathtt{i}}^{m_2+m_3}A_{\mathtt{k}}A_{\mathtt{i}}^{m_1}$ either has positive determinant, or negative determinant. Clearly it maps $\mathcal{K}$ into the interior of $\mathcal{K}$, and consequently it either maps $\mathcal{C}$ to the interior of $\mathcal{C}$, or to the interior of $-\mathcal{C}$. In any event, $(A_{\mathtt{i}}^{m_2+m_3}A_{\mathtt{k}}A_{\mathtt{i}}^{m_1})^2$ has positive determinant and maps the cone $\mathcal{C}$ to its own interior.

Define now $k:=2(m_0+nm_1+nm_2+nm_3)$, $n':=nk$ and $\mathtt{j}=(\mathtt{i}^{m_2+m_3}\mathtt{k}\mathtt{i}^{m_1})^{2n} \in \{1,\ldots,M\}^{n'}$. We claim that $A_{\mathtt{j}}$ does not have any eigenspaces in common with $A_{\mathtt{i}}^k$. Indeed, if $A_{\mathtt{i}}^k\overline{v}=A_{\mathtt{j}}\overline{v}=\overline{v}$ then $\overline{v}$ must equal either $\overline{u}$ or $\overline{s}$. In the former case we have $(A_{\mathtt{i}}^{m_2+m_3}A_{\mathtt{k}}A_{\mathtt{i}}^{m_1})^{2n}\overline{u}=\overline{u}$.  This matrix strictly preserves a cone and hence is hyperbolic by the Perron-Frobenius theorem; we deduce $A_{\mathtt{i}}^{m_2+m_3}A_{\mathtt{k}}A_{\mathtt{i}}^{m_1}\overline{u}=\overline{u}$. Since $\overline{u}$ is invariant for $A_{\mathtt{i}}$ we obtain $A_{\mathtt{i}}^{m_2+m_3}A_{\mathtt{k}}\overline{u}=\overline{u}$, and since $\overline{u}$ is invariant for $A_{\mathtt{i}}^{-1}$ we obtain $A_{\mathtt{k}}\overline{u}=\overline{u}$, contradicting the definition of $\mathtt{k}$.  The equation $A_{\mathtt{j}}\overline{s}=\overline{s}$ leads to the contradiction $A_{\mathtt{k}}\overline{s}=\overline{s}$ in an identical manner. Let us now define \[\Gamma':=\left\{\mathtt{i}_1\cdots \mathtt{i}_k \colon \mathtt{i}_j\in \Gamma\right\} \cup \{\mathtt{j}\}\subset \{1,\ldots,M\}^{n'}.\]

We have seen that for every  $\mathtt{l}\in \Gamma'$ the cone $\mathcal{C}$ is mapped to its own interior by $A_{\mathtt{l}}$, and using the estimates proved above it is easy to check that $\Gamma'$ satisfies the properties stipulated in Theorem \ref{th:monster} with $n'$ in place of $n$. To see that $\{A_{\mathtt{l}}\colon \mathtt{l}\in \Gamma'\}$ is strongly irreducible, we note that this set contains the two matrices $A_{\mathtt{i}}^k,A_{\mathtt{j}}$ which are hyperbolic and do not have a common invariant subspace. In particular, if any $\overline{u} \in \mathbb{RP}^1$ is given then $\overline{u}$ either is not fixed by $A_{\mathtt{i}}^k$ or is not fixed by $A_{\mathtt{j}}$, and so its orbit $(A_{\mathtt{i}}^{k})^n\overline{u}$ (resp. $A_{\mathtt{j}}^n\overline{u}$) is infinite and cannot be contained in a finite union of subspaces.
\end{proof}

\section{From strong open set condition to strong separation}
\label{se:SOSCtoSSC}

Let $T=(T_1,\ldots,T_M)$ be strict contractions on $\R^d$, and let $E$ be associated invariant set, i.e. $E$ is compact, nonempty and $E=\cup_i T_i(E)$. We recall some standard notions of separation:
\begin{itemize}
 \item $T$ is said to satisfy the \emph{strong separation condition} (SSC) if $T_i E\cap T_j E=\emptyset$ whenever $i\neq j$.
 \item $T$ satisfies the \emph{strong open set condition} (SOSC) if there exists a nonempty bounded open set $U$ with $U\cap E\neq\emptyset$, such that $T_i U\subset U$ and $T_i(U) \cap T_j(U)=\emptyset$ for all $i\neq j$.
 \item $T$ satisfies the \emph{open set condition} (OSC) if there exists a nonempty bounded open set $U$, such that $T_i U\subset U$ and $T_i(U) \cap T_j(U)=\emptyset$ for all $i\neq j$.
\end{itemize}
It is easy to see that SSC$\Rightarrow$SOSC$\Rightarrow$OSC. The OSC and SOSC are known to be equivalent when $T_i$ are similarities, but this equivalence breaks down in the self-affine case: Edgar \cite[Example 1]{Ed92} constructed a non-trivial affine IFS satisfying the OSC, for which all of the maps have the same fixed point, so that the attractor degenerates to this fixed point. The SOSC clearly cannot hold, since any open set containing the common fixed point cannot be mapped into disjoint sets by the IFS. Although this argument, and Edgar's construction, are fairly simple, we have not been able to find this observation in the literature.

The following result will allow us to deduce results for self-affine sets satisfying the SOSC from results which are known to hold only under the SSC.
\begin{theorem} \label{th:SOSCtoSSC}
Let $T=(T_1,\ldots,T_M)$ be a finite set of invertible affine contractions on $\R^2$ which satisfies the strong open set condition, with $T_i(x)=A_i(x)+v_i$. Let $\mu$ be a $\sigma$-invariant measure on $\Sigma_M$. Suppose $(A_1,\ldots,A_M)$ and $\mu$ satisfy the assumptions of Theorem \ref{th:monster}.

Then for any $\varepsilon>0$, there exist $n$ and a subset $\Gamma\subset\{1,\ldots, M\}^n$ satisfying all the conditions of Theorem \ref{th:monster} (except (vii)) and, in addition, $T_\Gamma=(T_\jj:\jj\in\Gamma)$ satisfies the strong separation condition.
\end{theorem}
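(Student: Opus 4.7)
My strategy is to extract a good subsystem $\Gamma_0$ via Theorem~\ref{th:monster}, and then append to each word of $\Gamma_0$ a carefully chosen fixed suffix so that (a) the image of $E$ under the suffix is contained in the open set $U$ witnessing the SOSC, and (b) composing $A_\ii$ with the suffix still strictly preserves the cone $\mathcal{C}$ produced by Theorem~\ref{th:monster}. This suffix upgrades SOSC-disjointness of open sets to SSC-disjointness of attractors, while every other conclusion of Theorem~\ref{th:monster} survives up to an exponent loss that becomes negligible once the suffix is fixed and $n$ is taken large.

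\emph{Setup.} From the SOSC, choose the open set $U$ with $T_i(U)\subset U$ and pairwise disjoint $T_i(U)$; iteration gives $T_\ii(U)\cap T_\jj(U)=\emptyset$ for distinct words $\ii,\jj$ of the same length. Pick $z\in U\cap E$ with coding $z=\pi(x^*)$; since $U$ is open and the cylinder images $T_{x^*_1\cdots x^*_m}(E)$ shrink to $\{z\}$, for some $m_0$ we have $T_{x^*_1\cdots x^*_{m_0}}(E)\subset U$, and I set $\lll:=x^*_1\cdots x^*_{m_0}$. Apply Theorem~\ref{th:monster} with parameter $\e/2$ and $n$ large relative to $|\lll|$ to produce $\Gamma_0\subset\{1,\ldots,M\}^n$ satisfying (i)--(vii), with strictly preserved cone $\mathcal{C}$.

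\emph{Construction and SSC.} Pick $\kk\in\Gamma_0$ satisfying the avoidance condition described below, and define
\[
\Gamma \;:=\; \{\ii\lll\kk^N:\ii\in\Gamma_0\}
\]
for a sufficiently large integer $N$. Since $T_{\kk^N}(E)\subseteq E$,
\[
T_{\ii\lll\kk^N}(E_\Gamma)\;\subseteq\;T_\ii\bigl(T_\lll(E)\bigr)\;\subseteq\;T_\ii(U),
\]
and the sets $T_\ii(U)$, $\ii\in\Gamma_0$, are pairwise disjoint; this gives the SSC for $T_\Gamma$. The cardinality of $\Gamma$ equals $|\Gamma_0|$; the singular-value and determinant estimates for $A_{\ii\lll\kk^N}=A_\kk^N A_\lll A_\ii$ differ from those for $A_\ii$ only by the bounded factor $A_\lll$ together with $A_\kk^N$, whose exponent rates agree with those of $A_\ii$. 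Divided by the new word length $n+|\lll|+N|\kk|$, the resulting loss is $o(1)$ and is absorbed by the reserved $\e/2$ slack, so every clause of Theorem~\ref{th:monster} except (vii) holds for $\Gamma$. Strong irreducibility, when it holds for $\Gamma_0$, passes to $\Gamma$ since the semigroup generated by $\{A_{\ii\lll\kk^N}\}$ agrees with that of $\{A_\ii\}$ up to the fixed factors $A_\lll,A_\kk$.

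\emph{The cone, and the main obstacle.} The delicate clause is (ii). Since $A_\ii\mathcal{C}\subset\mathrm{int}\,\mathcal{C}$, it is enough to show $A_\kk^N A_\lll\mathcal{C}\subset\mathrm{int}\,\mathcal{C}$, because then $A_\kk^N A_\lll A_\ii\mathcal{C}\subseteq A_\kk^N A_\lll\mathcal{C}\subset\mathrm{int}\,\mathcal{C}$. As $\kk\in\Gamma_0$, the matrix $A_\kk$ strictly preserves $\mathcal{C}$ and is therefore hyperbolic by Perron--Frobenius, with expanding direction in $\mathrm{int}\,\mathcal{C}$ and contracting direction outside $\mathcal{C}$. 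For large $N$ the iterate $A_\kk^N$ collapses the cone $A_\lll\mathcal{C}$ projectively onto the expanding direction of $A_\kk$, provided $A_\lll\mathcal{C}$ does not contain the contracting direction of $A_\kk$; this yields $A_\kk^N A_\lll\mathcal{C}\subset\mathrm{int}\,\mathcal{C}$, as required. Arranging this \emph{avoidance condition} by a suitable choice of $\kk\in\Gamma_0$---using, in the strongly irreducible case, the fact that stable directions of elements of $\Gamma_0$ range over an infinite subset of $\RP^1$ and so miss any prescribed projective interval (as in the argument behind Lemma~\ref{le:pablo}), and, in the presence of a common invariant one-dimensional subspace $V$, the observations that $V$ must lie outside $\mathcal{C}$, that $A_\lll V=V$, and that therefore $V$ lies outside $A_\lll\mathcal{C}$ as well, so any $\kk\in\Gamma_0$ whose stable direction is $V$ works---is the one genuinely nontrivial step of the proof; once it is handled, everything else is bookkeeping.
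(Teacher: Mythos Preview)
Your overall strategy---append a fixed suffix to each word of $\Gamma_0$ so as to force SSC---matches the paper's, and your SSC argument is correct. The gaps are in re-establishing clauses (ii) and (vi). For the cone, you need some $\kk\in\Gamma_0$ whose stable direction avoids the arc $A_\lll\mathcal{K}$; your claim that ``stable directions of elements of $\Gamma_0$ range over an infinite subset of $\RP^1$'' is false since $\Gamma_0$ is finite, and even over the generated semigroup all stable directions lie in the arc $\RP^1\setminus\mathcal{K}$, which nothing prevents $A_\lll\mathcal{K}$ from covering entirely. Your case split also omits the irreducible-but-not-strongly-irreducible case, which the hypotheses of Theorem~\ref{th:monster} allow. (There is a further sign issue: projective containment $A_\kk^N A_\lll\mathcal{K}\subset\mathrm{int}\,\mathcal{K}$ does not rule out $A_\kk^N A_\lll\mathcal{C}\subset-\mathcal{C}$.) For strong irreducibility, the assertion that the semigroup of $\{A_\kk^N A_\lll A_\ii:\ii\in\Gamma_0\}$ ``agrees with that of $\{A_\ii\}$ up to the fixed factors'' is not justified: the former consists of alternating products $A_\kk^N A_\lll A_{\ii_p}\cdots A_\kk^N A_\lll A_{\ii_1}$ and has no evident algebraic relation to the latter. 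The paper needs a genuine argument here (Lemma~\ref{le:irreducibility-with-matrix-at-front}).

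The paper sidesteps the cone difficulty entirely by exploiting clause~(vii) of Theorem~\ref{th:monster}: choosing $n_0$ so that a word $\ii_0$ with $T_{\ii_0}(\overline{U})\subset U$ has length at most $n_0$, every element of $\Gamma'$ contains $\ii_0$ as a subword, so by Lemma~\ref{le:SOSC-subsystem} the subsystem $T_{\Gamma'}$ \emph{itself} satisfies the SOSC with the same open set $U$. The ``into $U$'' suffix $\ii_1$ can therefore be chosen from $(\Gamma')^m$, so that $A_{\ii_1}$ already strictly preserves $\mathcal{C}$ and no avoidance argument is needed at all. Your choice of $\lll$ from the ambient system, rather than from the cone-preserving subsystem, is precisely what creates the obstacle.
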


Note that although the OSC is trivially preserved when passing to subsystems of iterates (the same open set works), things are less clear for the SOSC, as the open set may stop intersecting the new, smaller attractor. The following simple lemma will allow us to overcome this issue.
\begin{lemma} \label{le:SOSC-subsystem}
If $T=(T_1,\ldots,T_M)$ satisfy the SOSC, then there exist $n_0$ and a word $\ii_0\in \{1,\ldots,M\}^{n_0}$ with the following property: if $\Gamma$ is a subset of $\{1,\ldots,M\}^{n_1}$ where $n_1\ge n_0$, such that $\ii_0$ appears as a subword of some word of $\Gamma$, then $\{ T_\jj:\jj\in\Gamma\}$ also satisfies the SOSC (with the same open set).
\end{lemma}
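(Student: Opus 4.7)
The conditions defining SOSC split into two parts: the disjointness/invariance requirements that involve only $U$, which are combinatorial, and the nonemptiness $U\cap E_\Gamma\neq\emptyset$ of the intersection with the attractor $E_\Gamma$ of the subsystem, which is geometric. The first part is inherited automatically from the SOSC for $T$: iterating $T_iU\subset U$ gives $T_\jj U\subset U$ for all $\jj\in\Gamma$. For disjointness of $T_\jj U$ and $T_\lll U$ with distinct $\jj,\lll\in\Gamma$, pick the \emph{largest} index $k$ at which $\jj$ and $\lll$ disagree and write $T_\jj=T_\rho\circ T_{j_k}\circ T_\gamma$ and $T_\lll=T_\rho\circ T_{l_k}\circ T_{\gamma'}$, where $T_\rho$ is a common outer factor under the paper's convention $T_\ii=T_{i_n}\circ\cdots\circ T_{i_1}$. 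Since $T_\gamma U,T_{\gamma'}U\subset U$, the sets $T_{j_k}T_\gamma U$ and $T_{l_k}T_{\gamma'}U$ sit inside the disjoint sets $T_{j_k}U$ and $T_{l_k}U$, and the injectivity of $T_\rho$ preserves this disjointness.

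The heart of the lemma is therefore to arrange $U\cap E_\Gamma\neq\emptyset$, and the plan is to choose $\ii_0$ so that $T_{\ii_0}(E)\subset U$. Pick $x_0\in U\cap E$ via the SOSC together with a coding $\omega\in\Sigma_M$ satisfying $\pi_T(\omega)=x_0$. The identity $x_0=T_{\omega_1}\circ\cdots\circ T_{\omega_n}(\pi_T(\sigma^n\omega))$ places $x_0$ inside $T_{\omega_1}\circ\cdots\circ T_{\omega_n}(E)$, a set of diameter at most $r^n\operatorname{diam}(E)$ with $r=\max_i\|A_i\|<1$. For $n_0$ large enough this set is contained in the open neighbourhood $U$ of $x_0$, and setting $\ii_0:=(\omega_{n_0},\omega_{n_0-1},\ldots,\omega_1)$ yields $T_{\ii_0}(E)\subset U$.

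Finally, suppose $\jj\in\Gamma$ with $|\jj|=n_1\ge n_0$ contains $\ii_0$ as a subword, and write $\jj=\alpha\ii_0\beta$ with $\alpha,\beta$ possibly empty. Then $T_\jj=T_\beta\circ T_{\ii_0}\circ T_\alpha$, and using $E_\Gamma\subset E$, the invariance $T_\alpha(E)\subset E$, the arrangement $T_{\ii_0}(E)\subset U$, and iteratively $T_iU\subset U$,
\[
T_\jj(E_\Gamma)\subset T_\beta(T_{\ii_0}(T_\alpha(E)))\subset T_\beta(T_{\ii_0}(E))\subset T_\beta(U)\subset U.
\]
Since also $T_\jj(E_\Gamma)\subset E_\Gamma$ by the $\Gamma$-invariance of $E_\Gamma$, we obtain $\emptyset\neq T_\jj(E_\Gamma)\subset U\cap E_\Gamma$, which completes the verification of the SOSC for the subsystem with the same open set. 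No deep obstacle is anticipated; the only delicate point is careful bookkeeping with the right-to-left composition convention so that the common-outer-factor trick in the first paragraph transmits the disjointness correctly.
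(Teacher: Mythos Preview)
Your proof is correct and follows essentially the same approach as the paper's. Both arguments split the SOSC into the OSC part (inherited automatically) and the attractor-meets-$U$ part, and both secure the latter by choosing $\ii_0$ so that $T_{\ii_0}$ compresses something into $U$, then propagating this through the subword decomposition $\jj=\alpha\ii_0\beta$.

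The differences are cosmetic. The paper chooses $\ii_0$ so that $T_{\ii_0}(\overline{U})\subset U$ rather than your $T_{\ii_0}(E)\subset U$, and then concludes by observing that $T_\jj(\overline{U})\subset U$ forces the \emph{fixed point} of $T_\jj$ (which lies in $E_\Gamma$) to belong to $U$; you instead show $T_\jj(E_\Gamma)\subset U\cap E_\Gamma$ directly. The paper also dispatches the OSC inheritance with a single ``clearly'', whereas you spell out the common-outer-factor argument. One small note: the paper appears to use the left-to-right convention $T_\ii=T_{i_1}\circ\cdots\circ T_{i_n}$ for affine compositions (consistent with the coding map), whereas you adopted the right-to-left convention matching $A_\ii$; your argument is internally consistent and valid under your convention, so this causes no mathematical issue, but be aware of the discrepancy when cross-referencing. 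The paper's slightly stronger conclusion $T_{\ii_0}(\overline{U})\subset U$ is reused later in the proof of Theorem~\ref{th:SOSCtoSSC} to obtain strong separation, so that version is marginally more convenient downstream.
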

\begin{proof}
Let $U$ be the open set for $T$. It follows from the definition of SOSC that there exist $n_0$ and $\ii_0\in \{1,\ldots,M\}^{n_0}$ such that $T_{\ii_0}(\overline{U})\subset U$. Let $\Gamma$ be as in the statement of the lemma and suppose $\jj=(\kk \ii_0 \kk')\in\Gamma$ (where $\kk$ or $\kk'$ might be the empty word). Clearly $\{ T_\jj:\jj\in\Gamma\}$ satisfies the OSC with the same open set $U$. Moreover, since
\[
T_\jj(\overline{U})=T_{\kk \ii_0 \kk'} (\overline{U}) \subset T_\kk T_{\ii_0}\overline{U}\subset T_{\kk}{U}\subset U \subset \overline{U},
\]
it follows that the fixed point of the contraction $T_\jj$ belongs to $U$. Since this point belongs to the attractor the SOSC is satisfied.
\end{proof}

The following lemma will help us achieve strong irreducibility of the new subsystem.
\begin{lemma} \label{le:irreducibility-with-matrix-at-front}
Let $A_1,\ldots,A_M \in GL_2(\mathbb{R})$ be hyperbolic matrices which do not have a common invariant subspace. Let $B \in GL_2(\mathbb{R})$. Then for infinitely many $n\geq 1$, the set
\[\left\{A_{i_1}\cdots A_{i_n}B \colon 1\leq i_1,\ldots,i_n \leq M\right\}\]
is irreducible.
\end{lemma}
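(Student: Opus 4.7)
The collection $\{A_{i_1}\cdots A_{i_n}B\}$ fails to be irreducible if and only if there exists a line $L'\in\mathbb{RP}^1$ such that $A_{i_1}\cdots A_{i_n}L'$ is independent of the word $(i_1,\ldots,i_n)$: indeed, a common invariant line $L$ of the $A_{\vec i}B$ corresponds precisely to $L':=BL$, with the common value being $L=B^{-1}L'$. The plan is to establish the stronger statement that no such $L'$ exists for \emph{any} $n\geq 3$, from which the lemma follows at once.

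Suppose for contradiction that such $L'$ exists for some $n\geq 3$, and fix $k\in\{1,\ldots,M\}$. Comparing the products indexed by $(k,\ldots,k)$ and by $(k,\ldots,k,j,k,\ldots,k)$, with $j$ inserted in position $p$ for $1\leq p\leq n$, the identity
\[
A_k^n L'=A_k^{p-1}A_jA_k^{n-p}L'
\]
simplifies, after cancelling the outer factors of $A_k$, to $A_k(A_k^qL')=A_j(A_k^qL')$ for every $j\in\{1,\ldots,M\}$ and every $q=0,1,\ldots,n-1$. Hence each iterate $A_k^qL'$ lies in the common fixed locus
\[
F:=\{L\in\mathbb{RP}^1:A_1^{-1}A_jL=L\text{ for all }j=1,\ldots,M\}
\]
of the projective transformations $A_1^{-1}A_j$.

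Now the hypothesis that the $A_i$ have no common invariant subspace forces them not all to coincide, since a single hyperbolic matrix would trivially preserve its two eigenlines; consequently some $A_1^{-1}A_j$ is a non-identity element of $\mathrm{PGL}_2(\mathbb{R})$ and therefore has at most two fixed points, giving $|F|\leq 2$. On the other hand, because $A_k$ is hyperbolic, every positive power $A_k^m$ is hyperbolic with the same two eigenlines as $A_k$; thus if $L'$ is not an eigenline of $A_k$, the $n$ iterates $\{A_k^qL':0\leq q\leq n-1\}$ are pairwise distinct and produce $n\geq 3$ elements of $F$, contradicting $|F|\leq 2$. Therefore $L'$ is an eigenline of $A_k$, and since $k$ was arbitrary, $L'$ is a common eigenline of $A_1,\ldots,A_M$ — contradicting the hypothesis of no common invariant subspace.

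The only delicate ingredient is the dichotomy for the $A_k$-orbit of $L'$ (either $L'$ is an eigenline, or the $n$ iterates are distinct), which depends on hyperbolicity of $A_k$; the remainder is a routine coordinate-by-coordinate comparison of words. I expect the main obstacle to be purely bookkeeping, namely choosing the right pair of words whose associated products agree on $L'$ in order to locate the orbit of $L'$ inside the small set $F$.
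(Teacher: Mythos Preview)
Your proof is correct and in fact establishes the stronger conclusion that the set is irreducible for \emph{every} $n\geq 3$, not merely for infinitely many $n$. The paper's argument is different in structure: it assumes reducibility for all sufficiently large $n$, so that there is a sequence of common fixed lines $\overline{v}_n$; it then compares $A_1^nB$ with $A_1^{n-1}A_2B$ to trap $B\overline{v}_n$ in the (at most two-point) fixed locus of $A_1^{-1}A_2$, passes to a subsequence $n_r$ with $B\overline{v}_{n_r}=\overline u$ constant, and finally uses the \emph{limiting} behaviour of $A_i^{n_r}\overline u$ as $r\to\infty$ (each converging to an eigenline of $A_i$) to produce a common invariant subspace. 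Your approach avoids the limiting step entirely by comparing words that differ in a single position, which places the whole $A_k$-orbit $\{A_k^q L':0\le q\le n-1\}$ inside the finite fixed locus $F$ for a \emph{fixed} $n$; the hyperbolicity of $A_k$ then forces $L'$ to be an eigenline of every $A_k$ directly. Both proofs hinge on the same elementary fact that a non-identity element of $\mathrm{PGL}_2(\mathbb{R})$ has at most two fixed points, but yours is more self-contained and yields a sharper statement. One small point of phrasing: when you say the $A_i$ ``do not all coincide'', what you actually need (and use) is that they do not all coincide \emph{projectively}, i.e.\ some $A_j$ is not a scalar multiple of $A_1$; this is indeed forced by irreducibility since scalar multiples of a hyperbolic matrix share its eigenlines.
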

\begin{proof}
We prove the lemma by contradiction. If the conclusion is false then there exists a sequence $(\overline{v}_n)$ of elements of $\mathbb{RP}^1$ such that for all large enough $n$ we have $A_{i_1}\cdots A_{i_n}B\overline{v}_n=\overline{v}_n$ for all $i_1,\ldots,i_n\in\{1,\ldots,M\}$.
Since the matrices $A_1,\ldots,A_M$ are irreducible, at least two of them are not scalar multiples of one another. Without loss of generality, we assume that $A_1$ is not a scalar multiple of $A_2$. We have $A_1^{n-1}A_2B\overline{v}_n=\overline{v}_n=A_1^nB\overline{v}_n$ for all large enough $n$, so in particular $A_1^{-1}A_2B \overline{v}_n=B\overline{v}_n$ for all large enough $n$. Since $A_1^{-1}A_2$ is not a scalar multiple of the identity it fixes at most two elements of $\mathbb{RP}^1$, and this implies that the sequence $(B \overline{v}_n)$ can take at most two distinct values when $n$ is sufficiently large.  We may therefore choose $\overline{u}\in\mathbb{RP}^1$ and a strictly increasing sequence of natural numbers $(n_r)_{r=1}^\infty$ such that $B\overline{v}_{n_r}=\overline{u}$ for all $r\geq 1$. In particular
\[A_1^{n_r}\overline{u}=A_2^{n_r}\overline{u}=\cdots =A_M^{n_r}\overline{u}\]
for all $r\geq 1$. Since the matrices $A_i$ are hyperbolic, for each $i$ the sequence $A_i^{n_r}\overline{u}$ converges projectively as $r\to\infty$ to an invariant subspace of $A_i$. Taking the limit $r\to\infty$ in the above equation we conclude that there exists a common invariant subspace of $A_1,\ldots,A_M$, which is a contradiction.
\end{proof}

\begin{proof}[Proof of Theorem \ref{th:SOSCtoSSC}]
We first choose $n_0$ and $\ii_0$ as in Lemma \ref{le:SOSC-subsystem}. Next, we choose $n_1$ and a subset $\Gamma'\subset \{1,\ldots,M\}^{n_1}$ satisfying the conditions of Theorem \ref{th:monster} for the given value of $\e$. Hence, we know from Lemma \ref{le:SOSC-subsystem} that $T_{\Gamma'}=( T_\jj:\jj\in\Gamma')$ satisfies the SOSC, so we can pick $m$ and $\ii_1\in\{1,\ldots,M\}^{m n_1}$ such that $T_{\ii_1}(\overline{U})\subset U$, where $U$ is the corresponding open set.

Let $m'$ be a sufficiently large integer to be determined later. Write $n=m' n_1+m n_1$ and
\[
\Gamma=\left\{ \kk\ii_1:\kk\in(\Gamma')^{m'} \right\} \subset \{1,\ldots,M\}^n.
\]
The IFS $T_\Gamma:=(T_\jj:\jj\in\Gamma)$ satisfies the SSC. Indeed, pick $\jj_1\neq\jj_2\in \Gamma$. We can write $\jj_i=\kk \jj'_i\ii_1$ for some words $\jj'_i$ starting with different symbols $a_i$. Hence
\[
T_{\jj_1}(\overline{U})\cap T_{\jj_2}(\overline{U}) = T_{\kk}\left( T_{\jj'_1} T_{\ii_1} \overline{U}\cap T_{\jj'_2} T_{\ii_1} \overline{U} \right) \subset T_\kk(T_{a_1}U\cap T_{a_2}U) =\emptyset.
\]

We claim that $m'$ can be taken so that $\Gamma$ satisfies all the conditions of Theorem \ref{th:monster}, with $O(\e)$ in place of $\e$ (which is obviously enough to establish the claim). Note that the topological entropy of the subsystem is
\[
m' \log(\Gamma') > m'n_1(h(\mu)-\e) > n(h(\mu)-2\e),
\]
provided $m'$ is taken large enough. A similar calculation shows that parts (iii) and (iv) hold with $O(\e)$ in place of $\e$ if $m'$ is sufficiently large. Note that the implicit constant depends on $\mu$, but this does not matter as $\e$ is arbitrary.

Part (ii) is obvious, and if the original matrices $A_i$ were not strongly irreducible then this completes the proof. Otherwise it remains to establish strong irreducibility of $\{A_\ii \colon \ii \in \Gamma\}$. As all matrices $A_\ii$ are hyperbolic, we only need to show irreducibility. However, this follows from Lemma \ref{le:irreducibility-with-matrix-at-front}, provided $m'$ was taken from the infinite set provided by that lemma.

\end{proof}

\subsection{The case $s\ge 2$ under the OSC}

The next lemma is standard but we include the proof for completeness. It shows that in Theorems \ref{th:Lyap32} and \ref{th:HueterLalley}, the only non-trivial  case is that in which the affinity dimension is strictly less than $2$.

\begin{lemma} \label{le:affin-ge-2-OSC}
 Let $E$ be the invariant set under the affinities $(T_1,\ldots,T_M)$.
 \begin{enumerate}
  \item If $\adim(T_1,\ldots,T_M)=2$ and the OSC holds, then $E$ has non-empty interior (in particular, Hausdorff dimension $2$).
  \item If $\adim(T_1,\ldots,T_M)>2$, then the OSC cannot hold.
 \end{enumerate}
\end{lemma}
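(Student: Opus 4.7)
The plan hinges on the fact that for $s \geq 2$ the singular value function collapses to the multiplicative expression $\varphi^s(A) = |\det A|^{s/2}$. Since $|\det A_\ii| = \prod_j |\det A_{i_j}|$, the sum defining the pressure factorises as a geometric series, giving the closed form
\[
P(\varphi^s, A) = \log \sum_{i=1}^M |\det A_i|^{s/2} \qquad (s \geq 2).
\]
Combined with the fact that $s \mapsto P(\varphi^s, A)$ is strictly decreasing through $0$ at $s = \adim(A)$, this immediately yields the equivalences $\adim(A) \geq 2 \iff \sum_i |\det A_i| \geq 1$ and $\adim(A) = 2 \iff \sum_i |\det A_i| = 1$. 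This reduction is the only non-routine ingredient; once it is in hand, everything else follows from elementary measure theory.

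For part (2), I would apply two-dimensional Lebesgue measure $|\cdot|$ to the OSC open set $U$: disjointness of the $T_i U$ and the containment $T_i U \subset U$ give
\[
\sum_i |\det A_i|\,|U| = \sum_i |T_i U| \leq |U|,
\]
so $\sum_i |\det A_i| \leq 1$, and the contrapositive of the opening observation yields $\adim(A) \leq 2$.

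For part (1), assume additionally that $\adim(A) = 2$, so that the displayed inequality is an equality: $|U \setminus \bigsqcup_i T_i U| = 0$. Iterating the OSC (the images $T_\ii U$ for $|\ii| = n$ are still pairwise disjoint subsets of $U$, with total measure $(\sum_i |\det A_i|)^n |U| = |U|$) gives $|U \setminus \bigsqcup_{|\ii|=n} T_\ii U| = 0$ for every $n$. Continuity gives $T_i \overline{U} \subset \overline{U}$, so the compact sets $K_n := \bigcup_{|\ii|=n} T_\ii \overline{U}$ form a decreasing sequence whose intersection is the attractor $E$ (standard for strictly contractive IFS). Since $K_n \supseteq \bigsqcup_{|\ii|=n} T_\ii U$, we have $|U \setminus K_n| = 0$ for every $n$, and therefore
\[
|U \setminus E| = \Bigl|\bigcup_n (U \setminus K_n)\Bigr| = 0.
\]
Thus $E \cap U$ has full measure in $U$, hence is dense in $U$ (any nonempty open $V \subset U$ has positive measure and so meets $E$), and since $E$ is closed this forces $U \subset E$, giving $E$ nonempty interior as required.

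There is no real obstacle here: the proof is essentially the classical Schief-type argument for self-similar sets, transplanted to the self-affine setting by means of the identity $|T_i U| = |\det A_i|\,|U|$. The only conceptual point worth flagging is the reduction of the affinity-dimension condition to the determinant inequality, which exploits the particular structure of $\varphi^s$ for $s \geq 2$.
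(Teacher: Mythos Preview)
Your proof is correct and follows essentially the same route as the paper's: both reduce the affinity-dimension hypothesis to the determinant condition $\sum_i |\det A_i| \gtreqqless 1$ via the explicit pressure formula for $s\geq 2$, then use the Lebesgue-measure bookkeeping on the OSC set $U$. Your treatment of part (1) is slightly more explicit than the paper's (you spell out the density-plus-closedness step that the paper compresses into the single containment $U\subset\bigcap_n\bigcup_{|\ii|=n}T_\ii(\overline U)=E$), but the underlying argument is identical.
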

\begin{proof}
Suppose $\adim(T_1,\ldots,T_M)=2$ and the OSC holds with open set $U$. Since $\adim(T_1,\ldots,T_M)=2$, we have $\sum_{i=1}^M \det(T_i)=1$, so $(T_i(U))_{i=1}^M$ is a partition of $U$ in measure. By iterating, so is $\{ T_\ii(U): \ii\in \{1,\ldots,M\}^n\}$ for any $n$. This implies that
\[
 U \subset \bigcap_{n=1}^\infty \bigcup_{\ii\in \{1,\ldots,M\}^n} \overline {T_\ii(U)} = \bigcap_{n=1}^\infty \bigcup_{\ii\in \{1,\ldots,M\}^n} T_\ii(\overline {U}) = E,
\]
giving the first claim.

Next, observe that $\adim(T_1,\ldots,T_M)>2$ if and only if $\sum_{i=1}^M \det(T_i) > 1$. If the OSC holds with bounded open set condition $U$, then $T_i(U)$ are pairwise disjoint subsets of $U$ whose area adds up to $(\sum_{i=1}^M \det(T_i))$ times the area of $U$, which cannot happen if $\adim(T_1,\ldots,T_M)>2$.
\end{proof}

\subsection{Proof of Theorem \ref{th:approx-by-Bernoulli}}

We can now easily conclude the proof of Theorem \ref{th:approx-by-Bernoulli}. By Theorem \ref{th:properties-of-mu}, the K\"{a}enm\"{a}ki measure has different Lyapunov exponents, so Theorem \ref{th:monster} (and, for the last claim, Theorem \ref{th:SOSCtoSSC}) is applicable. Hence, fix $\e>0$, and let $n, \Gamma$ be as given by Theorems \ref{th:monster} or \ref{th:SOSCtoSSC}.

The only claim in Theorem \ref{th:approx-by-Bernoulli} which is not immediate is the first one. Let $\nu$ be the uniform Bernoulli measure on $\Gamma^\mathbb{N}$. It follows from Theorem \ref{th:monster}(i) that
\[
h(\nu) \ge n(h(\mu)-\e),
\]
from Theorem \ref{th:monster}(iii) that
\[
\lambda_1(\nu) \in (n(\lambda_1(\mu)-\e),n(\lambda_1(\mu)+\e)),
\]
and from Theorem \ref{th:monster}(iv) that
\[
\lambda_1(\nu)+\lambda_2(\nu) \in (n(\lambda_1(\mu)+\lambda_2(\mu)-\e),n(\lambda_1(\mu)+\lambda_2(\mu)+\e)),
\]
which, combined with the previous observation, yields
\[
\lambda_2(\nu) \in (n(\lambda_2(\mu)-2\e),n(\lambda_2(\mu)+2\e)).
\]
The definition of Lyapunov dimension then implies that there exists a constant $C=C(\lambda_1(\mu),\lambda_2(\mu))>0$ such that
\[
\ldim(\nu,(A_\ii)_{\ii\in\Gamma}) \ge \ldim(\mu,A)-C\e = \adim(A)-C\e.
\]
Since $\e$ is arbitrary, this concludes the proof of Theorem \ref{th:approx-by-Bernoulli}.

\subsection{A counterexample to Theorems \ref{th:Lyap32} and \ref{th:HueterLalley} under the OSC}

\label{subsec:counterexample}

\begin{example} \label{ex:counterex-32}

Define eight matrices $A_1,\ldots,A_8 \in GL_2(\mathbb{R})$ by

\begin{align*}
A_1&:=\left(\begin{array}{cc}\frac{1}{8}&0\\\frac{1}{2}&\frac{1}{2} \end{array}\right),\qquad
A_2:=\left(\begin{array}{cc}\frac{1}{4}&\frac{1}{8}\\\frac{1}{2}&\frac{1}{2} \end{array}\right),\\
A_3&:=\left(\begin{array}{cc}\frac{3}{8}&\frac{1}{4}\\\frac{1}{2}&\frac{1}{2} \end{array}\right),\qquad
A_4:=\left(\begin{array}{cc}\frac{1}{2}&\frac{3}{8}\\\frac{1}{2}&\frac{1}{2} \end{array}\right),\\
A_5&:=\left(\begin{array}{cc}\frac{1}{2}&\frac{1}{2}\\ \frac{3}{8}&\frac{1}{2} \end{array}\right),\qquad
A_6:=\left(\begin{array}{cc}\frac{1}{2}&\frac{1}{2}\\\frac{1}{4}&\frac{3}{8} \end{array}\right),\\
A_7&:=\left(\begin{array}{cc}\frac{1}{2}&\frac{1}{2}\\\frac{1}{8}&\frac{1}{4} \end{array}\right),\qquad
A_8:=\left(\begin{array}{cc}\frac{1}{2}&\frac{1}{2}\\0&\frac{1}{8} \end{array}\right).
\end{align*}
and define $T_1,\ldots,T_8 \colon \mathbb{R}^2 \to \mathbb{R}^2$ by $T_ix:=A_ix$ for each $i=1,\ldots,8$. Then $(T_1,\ldots,T_8)$ satisfies all the hypotheses of Theorem \ref{th:Lyap32} except that the OSC holds instead of the SOSC.
\end{example}
\begin{proof}
It is straightforward to check that each $T_i$ is a contraction which fixes $0$, and it follows that the attractor of $(T_1,\ldots,T_8)$ is simply $\{0\}$. Clearly $A_1$ and $A_8$ are hyperbolic and it is easily checked that they do not share an eigenspace, so the matrices $A_1,\ldots,A_8$ are irreducible. One may also verify that the transformations $T_i$ satisfy the OSC with open set $U:=(0,1)^2$ (see Figure \ref{fi:bosc-not-sosc}). Finally, since
\begin{align*}
\lim_{n\to \infty} \frac{1}{n}\log \left(\sum_{|\mathbf{i}|=n}\varphi^{\frac{3}{2}}(A_{\mathbf{i}})\right) &\geq \lim_{n\to \infty} \frac{1}{n}\log \left(\sum_{|\mathbf{i}|=n}\left|\det A_{\mathbf{i}}\right|^{\frac{3}{4}}\right)\\
&=\log \sum_{i=1}^8 |\det A_i|^{\frac{3}{4}}=\log \sum_{i=1}^8 16^{-\frac{3}{4}}=0,
\end{align*}
the affinity dimension of $(T_1,\ldots,T_N)$ is at least $\frac{3}{2}$.

The SOSC cannot hold since any open set intersecting the attractor contains the fixed point of all maps (of course, failure of the SOSC also follows from Theorem \ref{th:Lyap32}).
\end{proof}

\setlength{\unitlength}{4cm}

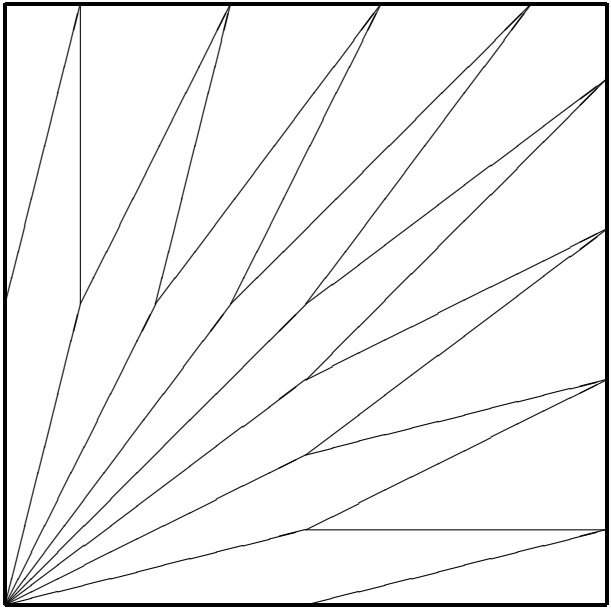
\begin{figure}\label{fi:bosc-not-sosc}

\begin{picture}(2,2)




\put(0,0){\line(1,4){0.25}}

\put(0,0){\line(1,2){0.5}}

\put(0,0){\line(3,4){0.75}}

\put(0,0){\line(1,1){1}}

\put(0,0){\line(4,3){1}}

\put(0,0){\line(2,1){1}}

\put(0,0){\line(4,1){1}}


\put(0,1){\line(1,4){0.25}}

\put(0.25,1){\line(1,2){0.5}}

\put(0.5,1){\line(3,4){0.75}}

\put(0.75,1){\line(1,1){1}}


\put(0.25,1){\line(0,1){1}}

\put(0.5,1){\line(1,4){0.25}}

\put(0.75,1){\line(1,2){0.5}}

\put(1,1){\line(3,4){0.75}}


\put(1,0){\line(4,1){1}}

\put(1,0.25){\line(2,1){1}}

\put(1,0.5){\line(4,3){1}}

\put(1,0.75){\line(1,1){1}}


\put(1,0.25){\line(1,0){1}}

\put(1,0.5){\line(4,1){1}}

\put(1,0.75){\line(2,1){1}}

\put(1,1){\line(4,3){1}}

\linethickness{1.2pt}


\put(0,0){\line(0,1){2}}

\put(0,0){\line(1,0){2}}

\put(2,0){\line(0,1){2}}

\put(0,2){\line(1,0){2}}

\end{picture}

\caption{Each of the eight rhombuses in the diagram is an image of the open unit square $(0,1)^2$ under one of the eight maps $T_i$ in Example \ref{ex:counterex-32}. The boundary of the diagram is the boundary of the unit square $[0,1]^2$.}

\end{figure}

The counterexample to Theorem \ref{th:HueterLalley} is similar but easier; it is enough to modify Edgar's example, \cite[Example 1]{Ed92}, so that the bunching assumption is met. For example, one can take $T_i(x)=A_i x$, where
\[
A_1:=\left(\begin{array}{cc}\frac{1}{5}&\frac{1}{5}\\0&\frac{1}{5} \end{array}\right),\qquad
A_2:=\left(\begin{array}{cc}\frac{1}{5}&0\\\frac{1}{5}&\frac{1}{5} \end{array}\right).
\]

\section{Applications}
\label{se:applications}

\subsection{Review of relevant results}

Here we present some recent advances in the dimension theory of self-affine systems, which we shall need in the proofs of our main applications. All of these results involve the Furstenberg measure associated to a Bernoulli measure on $\Sigma_M$ and a tuple $A=(A_1,\ldots,A_M)\in GL_2(\R)^M$. This is the push-down of the natural extension of $\mu$ under the unstable direction map $\mathfrak{u}(x)$ given by Theorem \ref{th:Oseledets}. Concretely, given an ergodic invariant measure $\mu$ on $\Sigma_M$ with invertible natural extension $\hat\mu$, for our purposes the Furstenberg measure $\eta=\eta_\mu$ is the Borel probability measure on $\RP^1$ defined by
\[
 \eta(B) = \hat\mu\left(\{x:\mathfrak{u}(x)\in B\}\right)
\]
where $\mathfrak{u}$ is given by the application of Theorem \ref{th:Oseledets} to $\hat\mu$.
This is well-defined whenever $\mu$ has different Lyapunov exponents, which will always be the case whenever we speak of a Furstenberg measure, even if $\mu$ is not a Bernoulli measure. We underline that in the Bernoulli case other definitions exist, but they are equivalent to the above one when the $A_i$ are strongly irreducible and the generated subgroup contains a hyperbolic matrix.

Recall that the \emph{(lower) Hausdorff dimension of a measure} $\mu$ is defined as
\[
\dim_H\mu = \inf \{ \dim_H(A): \mu(A)>0\}.
\]
A measure $\mu$ on $\R^2$ (or more generally any metric space) is said to be \emph{exact dimensional} if there exists $s$ (called the exact dimension of $\mu$) such that
\[
\lim_{r\searrow 0} \frac{\log \mu(B(x,r))}{\log r} = s
\]
for $\mu$-almost all $x$. Many measures of dynamical origin are exact dimensional, although this is often a highly nontrivial fact. By $\dim\mu=s$ we will mean that $\mu$ has exact dimension $s$. In this case, the Hausdorff dimension of the measure agrees with $s$. In particular, if $\dim\mu=s$ and $\mu(A)>0$, then $\dim_H(A)\ge s$.

Very recently, B\'{a}r\'{a}ny and K\"{a}enm\"{a}ki \cite{BaKa15} proved that every self-affine measure on the plane is exact dimensional, and its exact dimension can be expressed in terms of the so-called Ledrappier-Young formula. Previously, B\'{a}r\'{a}ny \cite{Ba15} and Falconer and Kempton \cite{FaKe15a} had established some special cases. We quote a result from \cite{Ba15}; although it is less general than the results from \cite{BaKa15}, its proof is simpler and it is enough for our purposes.

\begin{theorem}[{\cite[Theorem 2.8]{Ba15}}] \label{th:Barany}
Let $A=(A_1,\ldots,A_M) \in GL_2(\mathbb{R})$  be a set of contracting matrices strictly preserving a cone (or more generally satisfying dominated splitting), and let $\mu$ be a Bernoulli measure on $\Sigma_M$. Suppose that
\[
\dim_H(\eta_\mu)\ge \min(1,\ldim(\mu)).
\]
Then for every set of translations $v=(v_1,\ldots,v_M)$ such that $T_v=(A_i x+v_i)_{i=1}^m$ satisfies the SSC, the corresponding self-affine measure $\nu_v$ is exact-dimensional, and
\[
\dim(\nu_v) = \ldim(\mu,A).
\]
\end{theorem}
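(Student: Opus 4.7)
The approach would mimic the standard Ledrappier--Young program for planar self-affine measures with dominated splitting. The upper bound $\dim \nu_v \le \ldim(\mu,A)$ is general: using sub-multiplicativity of $\varphi^s$ together with Shannon--McMillan--Breiman, one covers balls by projections of cylinders whose images under $A_{x_1}\cdots A_{x_n}$ have controlled singular values, and a Borel--Cantelli argument shows that $\nu_v$ charges sets of dimension at most $\ldim(\mu,A)$. So the real content is the matching lower bound together with exact dimensionality.

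The plan is to build a Ledrappier--Young style decomposition adapted to the dominated splitting. Under the cone hypothesis, the images $A_\ii \mathcal{C}$ nest into an invariant line field $\mathfrak{u}(x)$, distributed according to $\eta_\mu$, which plays the role of the unstable foliation, while the complementary Oseledets direction $\mathfrak{s}(x)$ gives a ``strong stable'' direction with stronger contraction rate $\lambda_2(\mu) < \lambda_1(\mu) < 0$. I would write
\[
\dim \nu_v \;=\; \dim^s \nu_v + \dim^u \nu_v,
\]
where $\dim^u \nu_v$ is the a.s.\ exact dimension of the conditional measures of $\nu_v$ on the almost-invariant ``weak-stable'' leaves in direction $\mathfrak{u}(x)$, and $\dim^s \nu_v$ is the exact dimension of the projection of $\nu_v$ onto a transversal to $\mathfrak{u}$. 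Exact dimensionality of conditionals follows by realising them as invariant measures for a one-dimensional IFS on a transversal (contraction ratios given by $\alpha_1(A_\ii)$), using that the $A_\ii$ strictly contract the cone to get well-defined limit dynamics. The SSC plus the cone condition then forces the conditionals to be exactly dimensional with $\dim^u \nu_v = \min(1, h(\mu)/(-\lambda_1(\mu)))$, since the conditional dynamics is itself a strongly separated self-similar-like IFS.

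For $\dim^s \nu_v$ one uses the hypothesis on the Furstenberg measure. The transverse direction is, up to bounded distortion, the direction orthogonal to a typical $\eta_\mu$-point, and projecting $\nu_v$ onto such a transversal can be identified with the distribution, under $\mu$, of $\sum_n (A_{x_1}\cdots A_{x_n})v_{x_{n+1}}$ composed with an $\eta_\mu$-random linear projection. A Marstrand--Mattila slicing argument then gives
\[
\dim^s \nu_v \;=\; \min\!\bigl(1, h(\mu)/(-\lambda_2(\mu)) \bigr)
\]
on the event that the projection retains all the dimension of the base. The hypothesis $\dim_H(\eta_\mu) \ge \min(1,\ldim(\mu))$ is exactly what guarantees that the fibres of these projections are $\eta_\mu$-typical in the Marstrand sense: when $\ldim(\mu)\le 1$ the bound says $\eta_\mu$ has at least the dimension of $\nu_v$, so projection in a $\eta_\mu$-typical direction is injective at the measure-theoretic level; when $\ldim(\mu)> 1$ the bound forces $\dim \eta_\mu = 1$, so projections behave like Lebesgue-generic directions and one recovers full projected dimension. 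Case-checking against the three regimes defining $\ldim(\mu)$ then matches the sum $\dim^u + \dim^s$ to $\ldim(\mu,A)$.

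The hard part, as always in Ledrappier--Young, is the reverse Fano-type inequality: showing that local dimensions \emph{add}, i.e.\ that slicing by the weak-stable foliation is measure-theoretically ``transverse'' to the projection. Here domination is used in an essential way, because it provides uniform exponential contraction along $\mathfrak{u}(x)$ relative to $\mathfrak{s}(x)$, hence uniform Lipschitz-graph descriptions of the weak-stable holonomies; without domination, the stable/unstable angle can degenerate and Fubini fails. The SSC is then needed to rule out that the self-affine measure has atypical overlaps along these holonomies, so that the slice measures are genuinely one-dimensional IFS measures with separation. Once these two points are in hand, the equality $\dim \nu_v = \dim^u \nu_v + \dim^s \nu_v$ holds and the previous paragraph gives the theorem.
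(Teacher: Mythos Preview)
This theorem is not proved in the present paper: it is quoted verbatim from B\'ar\'any \cite[Theorem 2.8]{Ba15} and used as a black box. So there is no ``paper's own proof'' to compare against. Your sketch is in the spirit of B\'ar\'any's actual argument (a Ledrappier--Young formula exploiting the dominated splitting), but the concrete decomposition you wrote down is wrong, and would not recover $\ldim(\mu,A)$.

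The error is in the two formulas
\[
\dim^u\nu_v=\min\!\left(1,\frac{h(\mu)}{-\lambda_1(\mu)}\right),\qquad
\dim^s\nu_v=\min\!\left(1,\frac{h(\mu)}{-\lambda_2(\mu)}\right).
\]
These do not sum to $\ldim(\mu,A)$: for instance with $\lambda_1=-1$, $\lambda_2=-2$, $h=3/2$ one has $\ldim=5/4$ but your sum is $7/4$. You are using the \emph{full} entropy $h(\mu)$ twice, once in the slice and once in the projection, which double-counts. In the correct Ledrappier--Young formula one projects along the strong stable direction onto a line in the $\mathfrak{u}$-direction (so the projected IFS contracts at rate $\alpha_1$, giving $\dim\pi\nu_v\le\min(1,h/(-\lambda_1))$), and the conditional measures on the $\mathfrak{s}$-direction fibres carry only the \emph{conditional} entropy $h(\mu)-h_\pi(\mu)$, yielding conditional dimension $(h-h_\pi)/(-\lambda_2)$. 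Under SSC and the Furstenberg hypothesis one gets $h_\pi=-\lambda_1$ when $\ldim>1$, and the sum $1+(h+\lambda_1)/(-\lambda_2)$ then matches $\ldim$. Your orientation of the slicing (conditionals along $\mathfrak{u}$, projection onto a transversal to $\mathfrak{u}$) is also reversed relative to what the geometry of the cylinder ellipses dictates; the natural projection is \emph{onto} the long-axis direction, not away from it.

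Your description of how the hypothesis $\dim_H\eta_\mu\ge\min(1,\ldim\mu)$ enters is also off. It is not a Marstrand slicing statement about $\dim^s$; rather, it guarantees that the $\eta_\mu$-typical projection $\pi\nu_v$ achieves its maximal possible dimension $\min(1,\dim\nu_v)$, which is what forces $h_\pi$ to take the value needed for the LY sum to equal $\ldim$. In B\'ar\'any's argument this step goes through a dimension-conservation result for the projection in Furstenberg-typical directions, not through Marstrand--Mattila directly.
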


B\'{a}r\'{a}ny \cite[Theorem 2.9]{Ba15} also proved equality of the dimension of self-affine measures $\nu$ and Lyapunov dimension when $\dim(\nu)+\dim(\eta_\mu)\ge 2$. A drawback of this result is that it requires a-priori lower estimates for $\dim\nu$. A. Rapaport \cite{Ra15} was able to replace $\dim(\nu)$ by $\ldim(\mu,A)$, under some very mild condition on the matrices:

\begin{theorem}[{\cite[Main theorem]{Ra15}}] \label{th:Rapaport}
Let $A=(A_1,\ldots,A_M) \in GL_2(\mathbb{R})^M$ be an irreducible set of matrices, and suppose that $\mu$ is a Bernoulli measure on $\Sigma_M$ with different Lyapunov exponents, and such that
\[
\ldim(\mu) + \dim_H(\eta_\mu) > 2,
\]
where $\eta_\mu$ is the Furstenberg measure induced by $\mu$. Then for every set of translations $v=(v_1,\ldots,v_M)$ such that $T_v=(A_i x+v_i)_{i=1}^M$ satisfies the SSC, the self-affine measure $\nu_v$ induced by $\mu$ and $T_v$ satisfies
\[
\dim(\nu_v) = \ldim(\mu, A).
\]
\end{theorem}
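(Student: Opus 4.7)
The plan is to combine the Ledrappier-Young formula for planar self-affine measures (recently established by B\'{a}r\'{a}ny-K\"{a}enm\"{a}ki) with an entropy-increase/transversality argument in which the large dimension of the Furstenberg measure is used to control projections of $\nu_v$ onto lines transverse to the stable directions.

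First, I would take as input that $\nu_v$ is exact-dimensional and admits a Ledrappier-Young decomposition
\[
\dim \nu_v = \dim^s \nu_v + \dim^u \nu_v,
\]
where $\dim^s$ is the dimension of the conditionals of $\nu_v$ along strong-stable leaves and $\dim^u$ is the transverse dimension, realised $\hat\mu$-almost surely as the dimension of the projection of $\nu_v$ onto the line orthogonal to the stable direction $\mathfrak{s}(x)$ from Theorem \ref{th:Oseledets}. The inequality $\dim \nu_v \le \ldim(\mu,A)$ is standard, so the content of the theorem is the matching lower bound.

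Second, under the SSC and in the Bernoulli setting, the conditional measures along stable leaves behave like self-similar measures on the line whose contractions are governed by the small singular values $\alpha_2(A_\ii)$, while the transverse projections behave like self-similar measures whose contractions are the large singular values $\alpha_1(A_\ii)$. The distribution of the random lines $\mathfrak{u}(x),\mathfrak{s}(x)$ is exactly $\eta_\mu$ together with its dual Furstenberg measure, so $\eta_\mu$ controls how the family of projection/conditional measures varies in the base. Thirdly, to push $\dim^u\nu_v$ up to its Lyapunov prediction I would run a Hochman-style entropy-increase/inverse-theorem argument: were $\dim^u\nu_v$ strictly below its expected value, an entropy-at-scale statement would force, for a positive-$\eta_\mu$-measure set of directions, the projections of $\nu_v$ onto those directions to have dimension strictly below their expected value; a Marstrand-type slicing estimate applied to $\eta_\mu$ (or equivalently a projection theorem for $\nu_v$ whose exceptional set of directions is controlled by $\eta_\mu$) then contradicts $\ldim(\mu)+\dim_H\eta_\mu>2$. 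Irreducibility guarantees nondegeneracy of $\eta_\mu$, and the Bernoulli hypothesis gives $\mu$ the i.i.d.\ product structure on which the entropy estimates rely.

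The main obstacle is the third step. Hochman-type additive-combinatorial arguments are particularly delicate in the non-conformal setting, because the two Lyapunov exponents produce different metric scales at a common symbolic time $n$: one has to choose symbolic stopping times adapted to whichever singular value is being controlled, and then keep careful track of how entropy at dyadic metric scales interacts with the resulting irregular partitions. Transferring entropy-at-scale estimates across the $\mathfrak{u}(x)$-parameter using the regularity of $\eta_\mu$, and fusing them with the Ledrappier-Young decomposition in a way that exploits the \emph{full} strength of the inequality $\ldim(\mu)+\dim_H\eta_\mu>2$ (as opposed to the stronger hypothesis $\dim_H\eta_\mu\ge\min(1,\ldim\mu)$ of Theorem \ref{th:Barany}), is what ultimately consumes the dimension-sum hypothesis.
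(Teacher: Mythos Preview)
This theorem is not proved in the paper at all: it is quoted verbatim from Rapaport \cite{Ra15} and used as a black-box input in the applications of Section~\ref{se:applications}. There is therefore no ``paper's own proof'' against which to compare your attempt.

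That said, your sketch is a plausible high-level outline of Rapaport's actual strategy, and you have correctly identified the central difficulty. A few remarks. First, Rapaport's argument predates the B\'{a}r\'{a}ny--K\"{a}enm\"{a}ki Ledrappier--Young formula and does not rely on it; rather, he works directly with projections and slices and uses a transversality-type argument together with the dimension of the Furstenberg measure. Second, your third step is where the genuine content lies, and your description (``Hochman-style entropy-increase'') is not quite what happens: Rapaport's argument is closer in spirit to classical Marstrand-type projection/slicing combined with a careful analysis of how the Furstenberg measure governs the distribution of projection directions, rather than an additive-combinatorial inverse theorem. The hypothesis $\ldim(\mu)+\dim_H\eta_\mu>2$ enters precisely as the condition ensuring that the exceptional set of ``bad'' projection directions has $\eta_\mu$-measure zero. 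So while your outline captures the shape of the problem, the mechanism you propose for the key step is not the one actually used, and making a Hochman-type argument work here would be a substantial project in its own right.
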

We recall that Furstenberg's Theorem \cite{Fu63} guarantees that if the $A_i$ are strongly irreducible and the generated semigroup contains a hyperbolic matrix, then any Bernoulli measure has different Lyapunov exponents, so the above theorem is applicable.

Note that the dimension of the Furstenberg measure plays a key r\^{o}le in both of the last theorems. We conclude this review with a result of Hochman and Solomyak which provides a new condition under which the dimension of the Furstenberg measure is the ``expected'' one.
\begin{theorem}[{\cite[Theorem 1.1]{HoSo15}}] \label{th:Hochman-Solomyak}
Let $A_1,\ldots, A_M\in SL_2(\R)$ be a strongly irreducible set of matrices with exponential separation, whose generated semigroup contains a hyperbolic matrix.

Then for any Bernoulli measure $\mu$ on $\Sigma_M$, if we denote by $\eta=\eta_\mu$ the corresponding Furstenberg measure on $\RP^1$, then
\[
\dim\eta = \min\left(1,\frac{h(\mu)}{2\lambda_1(\mu)}\right).
\]
\end{theorem}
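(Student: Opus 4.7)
The plan is to obtain matching upper and lower bounds for $\dim\eta$. The upper bound is standard: since $\eta$ lives on the one-dimensional space $\mathbb{RP}^1$, we trivially have $\dim\eta \le 1$. For the other inequality, one uses that $\eta$ is the pushforward of the natural extension of $\mu$ under the unstable-direction map, and the projective action of $A \in SL_2(\R)$ near its attracting fixed point is approximately affine with derivative of the order of $\alpha_2(A)/\alpha_1(A)$. Hence the induced projective cocycle has Lyapunov exponent equal to $\lambda_2(\mu)-\lambda_1(\mu) = -2\lambda_1(\mu)$ (using $\lambda_1+\lambda_2=0$ in $SL_2$). A routine Ledrappier/Young-type argument then yields $\dim\eta \le h(\mu)/(2\lambda_1(\mu))$.

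The substantive content is the lower bound. The plan is to adapt Hochman's entropy-growth machinery from the self-similar (affine) setting on $\mathbb{R}$ to the projective setting. First I would fix a direction $\overline{v}\in\mathbb{RP}^1$ in the support of $\eta$ (and away from the stable directions of the hyperbolic element in the semigroup), and approximate $\eta$ by the discrete measures
\[
\mu_n = \sum_{\ii\in\{1,\ldots,M\}^n} p_{\ii}\,\delta_{A_{\ii}\overline{v}}.
\]
One shows that on a ``generic'' portion of projective space, and at the appropriate scale $r_n\asymp e^{-2n\lambda_1(\mu)}$, the measures $\eta$ and $\mu_n$ are comparable in the dyadic-entropy sense $H(\cdot,r_n)$.

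Next I would linearize the problem. Passing to an affine chart on $\mathbb{RP}^1$, the projective action of each $A_{\ii}$ restricted to a small neighborhood of the attracting direction is $C^1$-close to an affine map of contraction ratio $\alpha_2(A_{\ii})/\alpha_1(A_{\ii})$. The exponential separation hypothesis for $(A_1,\ldots,A_M)$ in $SL_2(\R)$ transfers (using $|A_{\ii}-A_{\jj}|\ge c^n$ and the explicit form of the projective action) to an exponential separation of the corresponding projective contractions, in the sense used by Hochman: distinct $\ii,\jj$ produce atoms of $\mu_n$ at distance at least $\rho^n$ for some $\rho>0$. With this in place, the measures $\mu_n$ play the role of the $n$-th generation measures of a self-similar system on $\R$ with exponential separation.

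At this point one applies Hochman's inverse theorem for entropy: if the scale-$r_n$ entropy of $\mu_n$ stays below the maximal value $-\log r_n \cdot \min(1, h(\mu)/(2\lambda_1(\mu)))$ along a subsequence, then $\mu_n$ must exhibit substantial ``affine concentration'' at some intermediate scale, and this is incompatible with the exponential separation derived above. I expect the principal obstacle to lie precisely here: controlling the errors introduced by replacing the nonlinear projective action by its linearization uniformly on a set of full $\eta$-measure, and verifying that the notion of exponential separation required by Hochman's theorem really does follow from the hypothesis on the matrix norms. Once this is carried out, one concludes that $\dim\eta \ge \min(1, h(\mu)/(2\lambda_1(\mu)))$, which combined with the upper bound completes the proof.
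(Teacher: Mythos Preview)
The paper does not contain a proof of this statement: Theorem~\ref{th:Hochman-Solomyak} is quoted verbatim from \cite[Theorem~1.1]{HoSo15} and used as a black box (to derive Corollary~\ref{co:Hochman-Solomyak} and then the applications in Section~\ref{se:applications}). There is therefore no ``paper's own proof'' to compare your proposal against.

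For what it is worth, your sketch is broadly in line with the actual strategy of Hochman--Solomyak: the upper bound is indeed the standard entropy/Lyapunov exponent inequality for the projective cocycle, and the lower bound is obtained by adapting Hochman's inverse theorem for entropy from the self-similar setting on $\mathbb{R}$ to the nonlinear projective action, with the exponential separation hypothesis playing the role of the super-exponential concentration obstruction. The genuine work in \cite{HoSo15} lies in the places you flag as obstacles---controlling the nonlinearity uniformly and transferring the matrix-norm separation to separation of projective orbits---and these require substantial additional machinery (in particular a careful treatment of random-walk entropy and a multiscale analysis tailored to the M\"obius action) that goes well beyond what you have outlined. But none of this is carried out, or needed, in the present paper.
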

We note that the setting of \cite{HoSo15} allows for non-freely generated groups if one can estimate the random walk entropy, but this does not seem to be helpful for our applications, so we stick to the simpler situation above.

\subsection{A consequence of Theorem \ref{th:Hochman-Solomyak}}

We will need to apply Theorem \ref{th:Hochman-Solomyak} in the form given by the following corollary.
\begin{corollary} \label{co:Hochman-Solomyak}
 Let $A_1,\ldots, A_M\in GL_2^+(\R)$ be strongly irreducible matrices with exponential separation whose generated semigroup is not compact.

Then for any Bernoulli measure $\mu$ on $\Sigma_M$, if we denote by $\eta=\eta_\mu$ the corresponding Furstenberg measure on $\RP^1$, then
\[
\dim\eta = \min\left(1,\frac{h(\mu)}{\lambda_1(\mu)-\lambda_2(\mu)}\right).
\]
\end{corollary}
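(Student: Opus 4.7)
The plan is to reduce to Theorem \ref{th:Hochman-Solomyak} by normalizing determinants. Set $B_i := A_i/\sqrt{\det A_i} \in SL_2(\R)$ for each $i$. Since $B_i$ is a positive scalar multiple of $A_i$, the induced map on $\RP^1$ is the same, so the Oseledets unstable direction $\mathfrak{u}(x)$ coincides for the cocycles generated by $A$ and by $B$; consequently the Furstenberg measure $\eta_\mu$ is the same regardless of which family is used.

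Next, I would check that $(B_1,\ldots,B_M)$ satisfies the hypotheses of Theorem \ref{th:Hochman-Solomyak}. Strong irreducibility is immediate, as it depends only on the projective action. Exponential separation is inherited from the $A_i$ by the scaling-invariance property advertised in the discussion following the statement of Theorem \ref{th:Lyap32}. The key point that requires attention is the existence of a hyperbolic matrix in the semigroup generated by the $B_i$. This is where the non-compactness hypothesis enters: if $\langle B_i \rangle \subset SL_2(\R)$ were precompact it would be conjugate into $SO(2)$, but then its projective action would have no proximal element and, being strongly irreducible, it would consist of rotations of infinite order, forcing $\lambda_1(\mu)=\lambda_2(\mu)$ and trivialising the statement. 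Under the intended interpretation of the hypothesis (non-compactness of the semigroup modulo scalars, which is precisely what guarantees distinct Lyapunov exponents), a standard $KAK$-decomposition argument then produces an element $g \in \langle B_i \rangle$ with $\|g\|$ arbitrarily large; combining this with strong irreducibility to perturb the (near-)rank-one limit of $g^n/\|g^n\|$ away from its repelling direction yields a hyperbolic element of $\langle B_i \rangle$.

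With the hypotheses verified, the calculation of the Lyapunov exponents is a routine application of Birkhoff's theorem. From $B(x,n) = A(x,n)/\sqrt{\det A(x,n)}$ we obtain
\[
\log \alpha_k(B(x,n)) = \log \alpha_k(A(x,n)) - \tfrac{1}{2}\log \det A(x,n) \qquad (k=1,2),
\]
and since $\frac{1}{n}\int \log \det A(x,n)\,d\mu \to \lambda_1(\mu)+\lambda_2(\mu)$, we find
\[
\lambda_1(B,\mu) = \tfrac{1}{2}(\lambda_1(\mu)-\lambda_2(\mu)), \qquad \lambda_2(B,\mu) = -\lambda_1(B,\mu).
\]
Applying Theorem \ref{th:Hochman-Solomyak} to $(B_1,\ldots,B_M)$ and $\mu$ then yields
\[
\dim \eta = \min\!\left(1,\frac{h(\mu)}{2\lambda_1(B,\mu)}\right) = \min\!\left(1,\frac{h(\mu)}{\lambda_1(\mu)-\lambda_2(\mu)}\right),
\]
as required.

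The main obstacle is the transfer of hypotheses, and within it the verification that $\langle B_i\rangle$ contains a hyperbolic element: the corollary's blanket ``non-compact'' hypothesis has to be parsed in the correct way (the $SL_2(\R)$-semigroup, not the $GL_2^+(\R)$-semigroup) so that normalisation does not destroy it. Exponential separation under rescaling, though stated to be standard, also deserves a brief explicit check; the trace-inequality $|\operatorname{tr}(B_\ii) - \operatorname{tr}(B_\jj)| \le 2\|B_\ii - B_\jj\|$ together with the fact that $\operatorname{tr}(B_\ii)$ is a bounded-degree algebraic combination of the traces and determinants of the $A_i$'s is the kind of ingredient I would use here.
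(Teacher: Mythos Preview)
Your overall strategy---normalize to $SL_2(\R)$ via $B_i = A_i/\sqrt{\det A_i}$, transfer the hypotheses of Theorem~\ref{th:Hochman-Solomyak}, and recompute the top Lyapunov exponent---is exactly the paper's approach, and your Lyapunov exponent calculation matches theirs line for line.

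The substantive difference is in how exponential separation is transferred, and here your proposal has a gap. Your appeal to the ``scaling-invariance property advertised in the discussion following Theorem~\ref{th:Lyap32}'' is circular: that remark explicitly defers its justification to the proof of this very corollary. Your fallback trace-based sketch is also not the right tool; there is no reason for $\operatorname{tr}(B_\ii)$ and $\operatorname{tr}(B_\jj)$ to be exponentially separated, and this would at best show $B_\ii\neq B_\jj$, not a quantitative gap. The paper instead uses a commutator trick: for distinct $\ii,\jj$ of length $n$ the concatenations $\ii\jj$ and $\jj\ii$ are distinct words of length $2n$, so exponential separation of the $A_i$ gives $\|A_\ii A_\jj - A_\jj A_\ii\|\ge c^{2n}$. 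Since $A_\ii A_\jj - A_\jj A_\ii = \sqrt{\det A_\ii\det A_\jj}\,(\overline{A}_\ii\overline{A}_\jj-\overline{A}_\jj\overline{A}_\ii)$ and $\|\overline{A}_\ii\overline{A}_\jj-\overline{A}_\jj\overline{A}_\ii\|\le 2\|\overline{A}_\ii\|\,\|\overline{A}_\ii-\overline{A}_\jj\|$, one solves for an exponential lower bound on $\|\overline{A}_\ii-\overline{A}_\jj\|$.

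Conversely, your worry about whether $\langle B_i\rangle$ contains a hyperbolic element is well taken and is a point the paper's own proof simply does not address. As you note, non-compactness of the $GL_2^+(\R)$-semigroup is nearly vacuous and does not automatically pass to the normalized semigroup. In every application in the paper the subsystem strictly preserves a cone, so the normalized matrices are hyperbolic and the issue is moot; but at the level of the corollary as stated, your reading of the hypothesis as non-compactness modulo scalars, together with your sketch for producing a hyperbolic element, is more careful than what the paper provides.
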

\begin{proof}
For $A\in GL_2^+(\R)$, let $\overline{A}=\det{A}^{-1/2}A$. Then $\overline{A}_i\in SL_2(\R)$. We claim that
\[
 \|\overline{A}_\ii-\overline{A}_\jj\| \ge \tfrac12 \delta^n
\]
if $\ii\neq\jj\in \{1,\ldots,M\}^n$, where
\[
 \delta = \frac{c^2}{\max_{i} \det(A_i)\max_i \|\overline{A}_i\| }.
\]
In other words, $\overline{A}_i$ also has exponential separation. Indeed, if  $|\overline{A}_\ii-\overline{A}_\jj|<\tfrac12 \delta^n$ for some $\ii,\jj\in\{1,\ldots,M\}^n$, then
\begin{align*}
 c^{2n} &\le \|A_\ii A_\jj- A_\jj A_\ii \|\\
 &= \det(A_\ii)^{1/2}\det(A_\jj)^{1/2}   \|\overline{A}_\ii \overline{A}_\jj- \overline{A}_\jj \overline{A}_\ii \| \\
 &\le \det(A_\ii)^{1/2}\det(A_\jj)^{1/2} \left( \|\overline{A}_\ii \overline{A}_\jj - \overline{A}_\ii^2\|  +\| \overline{A}_\ii^2 - \overline{A}_\jj \overline{A}_\ii\| \right) \\
 &\le 2  \det(A_\ii)^{1/2} \det(A_\jj)^{1/2}  \| \overline{A}_\ii \| \|\overline{A}_\ii-\overline{A}_\jj\| \\
 &< (\max_{i} \det(A_i))^n (\max_i \|\overline{A}_i\|)^n \delta^n,
\end{align*}
contradicting the choice of $\delta$.

On the other hand, if $\lambda_1(\mu)>\lambda_2(\mu)$ are the Lyapunov exponents for the cocycle generated by the $A_i$ then, since
\[
\log \|\overline{A}_{x_1}\cdots \overline{A}_{x_n}\| = \log \det(A_{x_1}\cdots A_{x_n})^{-1/2} +\log\|A_{x_1}\cdots A_{x_n}\|,
\]
the top Lyapunov exponent for the cocycle $\overline{A}_i$ is
\[
-\frac{\lambda_1(\mu)+\lambda_2(\mu)}{2} + \lambda_1(\mu) = \frac{\lambda_1(\mu)-\lambda_2(\mu)}{2}.
\]
The conclusion now follows from Theorem \ref{th:Hochman-Solomyak}.
\end{proof}

\subsection{Proof of Theorem \ref{th:Lyap32} and generalizations}

Theorem \ref{th:Lyap32} will follow as a corollary of the following more general result.
\begin{theorem} \label{th:application-Rapaport}
Let $(T_1,\ldots, T_M)$ be strictly contractive, invertible affine maps, with $T_i(x)=A_i x+v_i$, and let $E$ be the associated self-affine set. Suppose that the following conditions hold:
\begin{enumerate}
\item
The transformations $(A_1,\ldots,A_M)$ are strongly irreducible and generate a semigroup which contains a hyperbolic matrix.
\item
The affinities $(T_1,\ldots,T_M)$ satisfy the strong open set condition.
\item
The maps $(A_1,\ldots,A_M)$ have exponential separation.
\item
\[
\adim(T)+\dim_S(\eta)>2,
\]
where $\eta$ is the Furstenberg measure induced by the K\"aenm\"aki measure $\mu$ for $A$, and
\[
\dim_S\eta = \min\left(1,\frac{h(\mu)}{\lambda_1(\mu)-\lambda_2(\mu)}\right)
\]
is the similarity dimension of $\eta$.
\end{enumerate}
Then $\dim_H(E)=\adim(T_1,\ldots,T_M)$.
\end{theorem}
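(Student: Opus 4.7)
The plan is to combine Theorem \ref{th:approx-by-Bernoulli} with Theorem \ref{th:Rapaport} and Corollary \ref{co:Hochman-Solomyak}. The upper bound $\dim_H(E)\le\adim(A)$ is Falconer's classical inequality, so only the lower bound is at stake. If $\adim(A)\ge 2$, Lemma \ref{le:affin-ge-2-OSC} combined with the SOSC forces $\adim(A)=2$ and shows $E$ has non-empty interior, so assume henceforth $\adim(A)\in(0,2)$. A preliminary reduction passes to an iterate of $(T_1,\ldots,T_M)$ in which some $A_{\ii_0}$ itself appears as a generator, ensuring at least one linear part is hyperbolic; affinity dimension, strong irreducibility, exponential separation, and the SOSC are all preserved by iteration, so this places us inside the hypotheses of Theorem \ref{th:approx-by-Bernoulli}.

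Fix $\e>0$ and let $\mu$ be the K\"aenm\"aki measure of $A$. Theorem \ref{th:approx-by-Bernoulli} yields $n\in\mathbb{N}$, a set $\Gamma\subset\{1,\ldots,M\}^n$, and a Bernoulli measure $\nu$ on $\Gamma^\mathbb{N}$ such that: (a) $\ldim(\nu,(A_\ii)_{\ii\in\Gamma})\ge \adim(A)-\e$; (b) the family $(A_\ii)_{\ii\in\Gamma}$ is strongly irreducible and strictly preserves a cone, so each $A_\ii$ is hyperbolic with positive determinant; (c) the renormalized data $h(\nu)/n,\lambda_i(\nu)/n$ approximate $h(\mu),\lambda_i(\mu)$ to within $\e$; and (d) $(T_\ii)_{\ii\in\Gamma}$ satisfies the SSC. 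Exponential separation descends to the subsystem with constant $c^n$, since any two distinct $k$-fold concatenations of elements of $\Gamma$ remain distinct as length-$nk$ words over $\{1,\ldots,M\}$.

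These properties place the subsystem within the hypotheses of Corollary \ref{co:Hochman-Solomyak}, giving
\[
\dim \eta_\nu \;=\; \min\!\left(1,\,\frac{h(\nu)}{\lambda_1(\nu)-\lambda_2(\nu)}\right).
\]
The right-hand side is invariant under the rescaling by $n$, so by (c) it lies within $O(\e)$ of $\dim_S(\eta)$; likewise $\ldim(\nu,(A_\ii)_{\ii\in\Gamma})$ differs from $\adim(A)$ by $O(\e)$. Hypothesis (4) of the theorem then delivers the strict inequality $\ldim(\nu)+\dim\eta_\nu>2$ required by Theorem \ref{th:Rapaport}, provided $\e$ is taken sufficiently small. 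Applying that theorem to $(T_\ii)_{\ii\in\Gamma}$ produces a self-affine measure $\nu_\Gamma$ supported on the attractor $E_\Gamma\subseteq E$ with $\dim\nu_\Gamma=\ldim(\nu,(A_\ii)_{\ii\in\Gamma})\ge \adim(A)-\e$; hence $\dim_H(E)\ge\adim(A)-\e$, and letting $\e\downarrow 0$ completes the proof.

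The main conceptual hurdle in executing this plan is the quantitative transfer of the strict inequality in hypothesis (4)---formulated in terms of the (non-Bernoulli) K\"aenm\"aki measure $\mu$---to the analogous inequality for the Bernoulli measure $\nu$ supported on the subsystem. This is precisely the role played by the entropy and Lyapunov exponent approximation guaranteed by Theorem \ref{th:approx-by-Bernoulli}, which converts the present theorem into a clean consequence of the recent works of Hochman--Solomyak and Rapaport.
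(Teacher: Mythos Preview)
Your proposal is correct and follows essentially the same approach as the paper's proof: reduce to an iterate so that a hyperbolic matrix appears among the generators, invoke Theorem \ref{th:approx-by-Bernoulli} to pass to a strongly irreducible cone-preserving subsystem with SSC, use Corollary \ref{co:Hochman-Solomyak} to compute $\dim\eta_\nu$, transfer hypothesis (4) via the entropy/Lyapunov approximation, and conclude via Theorem \ref{th:Rapaport}. You are in fact slightly more careful than the paper in explicitly disposing of the case $\adim(A)\ge 2$ via Lemma \ref{le:affin-ge-2-OSC} before appealing to Theorem \ref{th:approx-by-Bernoulli}, which requires $\adim(A)\in(0,2)$.
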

\begin{proof}
Let $A_{\ii_0}$ be hyperbolic with $|\ii_0|=m$. By replacing $T_1,\ldots,T_M$ with the $M^m$ transformations $T_{i_1}\cdots T_{i_m}$ if necessary, we may assume without loss of generality that $|\ii_0|=1$. One can readily check that this iteration does not affect any of the hypothesis of the theorem; in particular, exponential separation is preserved (with a different constant $c$). Now apply Theorem \ref{th:approx-by-Bernoulli} with a sufficiently small $\e>0$ to obtain $n,\Gamma,\nu$ as in that theorem.  Since exponential separation is also preserved when passing to subsystems, it holds in particular for $( A_\ii: \ii\in\Gamma)$. Hence we can apply Corollary \ref{co:Hochman-Solomyak} to the Furstenberg measure $\eta_\nu$ associated to $\nu$ and  $( A_\ii: \ii\in\Gamma)$ to obtain
\[
\dim\eta_\nu = \min\left(1,\frac{h(\nu)}{\lambda_1(\nu)-\lambda_2(\nu)}\right) \ge \min\left(1,\frac{h(\mu)-\e}{\lambda_1(\mu)-\lambda_2(\mu)+2\e}\right).
\]
Hence, provided $\e$ was chosen sufficiently small,
\[
\ldim(\nu, (A_\ii:\ii\in\Gamma)) + \dim\eta_\nu > 2.
\]
Since $(T_\ii:\ii\in\Gamma)$ satisfies the SSC by Theorem \ref{th:approx-by-Bernoulli}, we conclude from Theorem \ref{th:Rapaport} that
\[
 \dim_H(E) \ge \dim\nu = \ldim(\nu, A),
\]
which can be taken arbitrarily close to $\adim(A_1,\ldots,A_M)$. Since the opposite inequality always holds, this completes the proof.
\end{proof}

We can now deduce Theorem \ref{th:Lyap32} as a corollary. In fact, we will weaken the required bound on the affinity dimension in terms of the bunching behavior of the maps $A_i$.
\begin{theorem} \label{th:Lyap32-generalized}
Suppose $(A_1,\ldots,A_M)$ satisfy assumptions (1)-(3) of Theorem \ref{th:Lyap32} and, furthermore, $\alpha_1(A_i) \le \alpha_2(A_i)^t$ for some $t\in [0,1/2)$ and all $i=1,\ldots,M$, and
\[
\adim(A_1,\ldots,A_M) \ge \frac{3(1-t)}{2-t}.
\]
Then, for any $v=(v_1,\ldots,v_M)$ such that $(A_i x+v_i)$ satisfies the strong open set condition, the associated self-affine set $E_v$ satisfies $\dim_H(E_v)=\adim(A_1,\ldots,A_M)$.
\end{theorem}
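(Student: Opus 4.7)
The plan is to verify the hypotheses of Theorem~\ref{th:application-Rapaport}. Its first three hypotheses match conditions~(1)--(3) of Theorem~\ref{th:Lyap32-generalized} verbatim, so the work lies in establishing the fourth:
\[
\adim(A_1,\ldots,A_M) + \dim_S(\eta) \,>\, 2,
\]
where $\eta$ is the Furstenberg measure of the K\"aenm\"aki measure $\mu$ for $A$. Before doing so I would dispose of the extreme values of $s:=\adim(A)$: Lemma~\ref{le:affin-ge-2-OSC}(1) forces $\dim_H E_v=2$ when $s=2$, and part~(2) of that lemma rules out $s>2$ under the SOSC; moreover, $t<1/2$ implies $3(1-t)/(2-t)>1$, so we may restrict to $s\in(1,2)$.

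The central step is to promote the pointwise bunching $\alpha_1(A_i)\le\alpha_2(A_i)^t$ to a bunching relation for the Lyapunov exponents $\chi_i:=-\lambda_i(\mu)$. Submultiplicativity of $\alpha_1$, supermultiplicativity of $\alpha_2$ on $GL_2(\mathbb{R})$, and $\sigma$-invariance of $\mu$ combine into
\[
\chi_1 \,\ge\, -\!\int\!\log\alpha_1(A_{x_1})\,d\mu(x) \,\ge\, -t\!\int\!\log\alpha_2(A_{x_1})\,d\mu(x) \,\ge\, t\chi_2,
\]
giving $\chi_1\ge t\chi_2$, while Theorem~\ref{th:properties-of-mu} yields $\chi_1<\chi_2$. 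Since $\mu$ is a K\"aenm\"aki measure with $\ldim(\mu)=s\in(1,2)$, the Lyapunov dimension identity pins down $h(\mu)=\chi_1+(s-1)\chi_2$.

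Condition~(4) then reduces to a computation. If $\dim_S\eta=1$ it is immediate from $s>1$. Otherwise $\dim_S\eta=h(\mu)/(\chi_2-\chi_1)<1$, and routine algebra rewrites the inequality as $(3-s)\chi_1>(3-2s)\chi_2$. When $s>3/2$ the right side is non-positive and the inequality is trivial; when $s\le 3/2$ it becomes $\chi_1/\chi_2>(3-2s)/(3-s)$. A direct calculation shows $(3-2s)/(3-s)=t$ precisely at $s=3(1-t)/(2-t)$, and $s\mapsto(3-2s)/(3-s)$ is strictly decreasing, so the hypothesis $s\ge 3(1-t)/(2-t)$ combined with $\chi_1/\chi_2\ge t$ already forces $\chi_1/\chi_2\ge(3-2s)/(3-s)$, with strict inequality outside of the simultaneous boundary $s=3(1-t)/(2-t)$ and $\chi_1/\chi_2=t$.

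The main obstacle is to force strictness at that boundary, since Theorem~\ref{th:application-Rapaport} demands strict $>$. For this I would appeal to the Oseledets identity $\lambda_1(\mu)=\int\log\|A_{x_1}\mathfrak{u}(x)\|\,d\mu(x)$, where $\mathfrak{u}(x)$ denotes a unit vector in the unstable direction from Theorem~\ref{th:Oseledets}. Equality $\chi_1=-\!\int\log\alpha_1(A_{x_1})d\mu$ would force $\mathfrak{u}(x)$ to lie in the top singular direction of $A_{x_1}$ for $\mu$-a.e.~$x$; combining this with the cocycle identity $A_{x_1}\mathfrak{u}(x)=\mathfrak{u}(\sigma x)$ and the global support of $\mu$ provided by Theorem~\ref{th:properties-of-mu} would collapse $\mathfrak{u}$ to a single projective direction fixed by every $A_i$, contradicting strong irreducibility. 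Hence $\chi_1>t\chi_2$ strictly, condition~(4) holds in every subcase, and Theorem~\ref{th:application-Rapaport} yields $\dim_H E_v=\adim(A_1,\ldots,A_M)$.
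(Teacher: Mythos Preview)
Your reduction to Theorem~\ref{th:application-Rapaport}, the disposal of $s\ge 2$ via Lemma~\ref{le:affin-ge-2-OSC}, the K\"aenm\"aki identity $h(\mu)=\chi_1+(s-1)\chi_2$, and the algebraic rewriting of condition~(4) as $(3-s)\chi_1>(3-2s)\chi_2$ all coincide with the paper's proof. The only substantive difference is how the strict bunching $\chi_1>t\chi_2$ is obtained: the paper invokes Corollary~\ref{co:bunching-to-Lyap-exponents}, whose proof runs the subadditive sequence $\int(\tau\log\alpha_1-\log\alpha_2)(A(x,n))\,d\mu$ and appeals to a structural lemma showing that any irreducible semigroup with $\alpha_1(A)=\alpha_2(A)^t$ for \emph{every} element must be simultaneously diagonalisable.

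Your Furstenberg-formula alternative is legitimate, but as written it has a gap. You speak of ``the top singular direction of $A_{x_1}$'' as if it were unique; nothing in the hypotheses rules out some $A_i$ being conformal ($\alpha_1(A_i)=\alpha_2(A_i)$), in which case equality $\|A_i\mathfrak{u}(x)\|=\alpha_1(A_i)$ imposes no constraint on $\mathfrak{u}(x)$ and your collapse argument stalls. The fix is to note that $\chi_1=t\chi_2$ also forces equality in your \emph{middle} step, hence $\alpha_1(A_i)=\alpha_2(A_i)^t$ for every $i$ by full support. For $t=0$ this gives $\alpha_1(A_i)=1$, contradicting strict contraction; for $t\in(0,1)$ it gives $\alpha_1(A_i)=\alpha_2(A_i)^t>\alpha_2(A_i)$, so every $A_i$ has distinct singular values, $\mathfrak{u}(x)=\overline{v_{x_1}}$ is determined by $x_1$, and then $A_{x_1}\overline{v_{x_1}}=\overline{v_{x_2}}$ for a.e.\ $x$ plus full support indeed forces all $\overline{v_i}$ to coincide and be fixed by each $A_j$, contradicting irreducibility. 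With that observation inserted, your route is complete and comparable in length to the paper's.
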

Note that Theorem \ref{th:Lyap32} follows immediately by taking $t=0$. Also, if $t\ge 1/2$, then Theorem \ref{th:HueterLalley}, which has no a priori assumption on the affinity dimension, becomes applicable. We also point out that for $t<1/2$, the lower bound on the affinity dimension is always larger than $1$.

In order to deal with the endpoint in the proof of Theorem \ref{th:Lyap32-generalized}, we will require the following lemma. It will allow us to show that a non-strict bunching condition is enough to guarantee a strict inequality between the Lyapunov exponents.
\begin{lemma}
Let $\mathcal{S}\subset GL_2(\mathbb{R})$ be a semigroup of contractions such that $\alpha_1(A)=\alpha_2(A)^t$ for every $A \in\mathcal{S}$ and some $t\in (0,1)$. Then the elements of $\mathcal{S}$ are simultaneously diagonalisable.
\end{lemma}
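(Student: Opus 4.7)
The strategy is to exploit the constraint $\alpha_1(A)=\alpha_2(A)^t$ to force both singular values to be \emph{multiplicative} on the semigroup $\mathcal{S}$, and then to extract a common orthonormal eigenbasis from the resulting rigidity.

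\textbf{Step 1 (Singular values are strictly separated).} First I would observe that, since each $A\in\mathcal{S}$ is a contraction, $\alpha_2(A)<1$; combined with $t<1$ this gives $\alpha_1(A)=\alpha_2(A)^t>\alpha_2(A)$. In particular the top left and right singular vectors of every $A\in\mathcal{S}$ are determined up to sign.

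\textbf{Step 2 (Multiplicativity of singular values).} For any $A,B\in\mathcal{S}$ the element $AB\in\mathcal{S}$ satisfies both $\alpha_1(AB)\alpha_2(AB)=|\det A||\det B|=\alpha_2(A)^{1+t}\alpha_2(B)^{1+t}$ and, by hypothesis, $\alpha_1(AB)\alpha_2(AB)=\alpha_2(AB)^{1+t}$. Comparing yields $\alpha_2(AB)=\alpha_2(A)\alpha_2(B)$, and then $\alpha_1(AB)=\alpha_2(AB)^t=\alpha_1(A)\alpha_1(B)$. Thus $\|AB\|=\|A\|\|B\|$ for all $A,B\in\mathcal{S}$.

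\textbf{Step 3 (Each element is diagonalisable in its own orthonormal basis).} Let $v_1(C),u_1(C)\in\mathbb{RP}^1$ denote the top right and left singular directions of $C$ (well-defined by Step 1). The identity $\|AB\|=\|A\|\|B\|$ with a unit norm maximiser $v$ forces $v=\pm v_1(B)$, and then $Bv_1(B)=\alpha_1(B)u_1(B)$ must be a top right singular vector of $A$; hence
\begin{equation*}
u_1(B)=\pm v_1(A)\qquad \text{for all }A,B\in\mathcal{S}.
\end{equation*}
Specialising to $A=B$ gives $u_1(A)=\pm v_1(A)$, so $v_1(A)$ is an eigenvector of $A$ with eigenvalue $\pm\alpha_1(A)$. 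Because $U_A$ and $V_A$ in the SVD of $A$ then have equal first columns (up to sign), their second columns also agree up to sign, so $v_2(A)$ is an eigenvector of $A$ with eigenvalue $\pm\alpha_2(A)$. Hence every $A\in\mathcal{S}$ is orthogonally diagonalisable.

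\textbf{Step 4 (Common eigenbasis).} Returning to $u_1(B)=\pm v_1(A)$ for arbitrary $A,B\in\mathcal{S}$ and combining with $u_1(B)=\pm v_1(B)$ from Step 3 gives $v_1(A)=\pm v_1(B)$ in $\mathbb{RP}^1$. Thus there is a single line $e\subset\mathbb{R}^2$ with $v_1(A)\in e$ for every $A\in\mathcal{S}$, and by orthogonality $v_2(A)\in e^\perp$ for every $A\in\mathcal{S}$. The common orthonormal basis obtained from $e$ and $e^\perp$ simultaneously diagonalises $\mathcal{S}$.

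The only step that requires any care is Step 3, where one has to verify that the equality in the submultiplicativity of $\|\cdot\|$ genuinely pins down both the top right singular direction of $B$ (as the maximiser) and the top left one (as lying in the top right singular subspace of $A$); once this is done, the rest is bookkeeping.
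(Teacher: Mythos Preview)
Your proof is correct and rests on the same key observation as the paper's, namely that the constraint $\alpha_1(A)=\alpha_2(A)^t$ together with multiplicativity of the determinant forces $\alpha_1(AB)=\alpha_1(A)\alpha_1(B)$ throughout $\mathcal{S}$. From there the two arguments diverge in execution. The paper writes the SVD as $A_i=R_{\psi_i}D_iR_{\phi_i}$ with rotations, uses the identities $\|A_i^2\|=\|D_i\|^2$ and $\|A_1A_2\|=\|D_1\|\|D_2\|$ to force $R_{\phi_i}R_{\psi_j}=\pm\mathrm{Id}$, and then conjugates by $R_{\phi_1}$; because the rotation parametrisation is only valid in $GL_2^+(\mathbb{R})$, the negative-determinant case is handled separately by squaring. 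Your argument instead works directly with singular directions: equality in $\|AB\|\le\|A\|\|B\|$ (with $\alpha_1>\alpha_2$ guaranteeing uniqueness of maximisers) pins down $u_1(B)=\pm v_1(A)$, and the two-dimensional fact that orthonormal pairs sharing one direction must share the other finishes the job. This is slightly more economical, since it treats both signs of determinant uniformly and avoids the explicit angle bookkeeping; the paper's version, on the other hand, makes the change-of-basis matrix $R_{\phi_1}$ completely explicit.
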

\begin{proof}
We observe that $\alpha_1(AB)=\alpha_1(A)\alpha_1(B)$ for all $A,B\in \mathcal{S}$, since
\begin{align*}
\alpha_1(AB)^{1+1/t}&=\alpha_1(AB)\alpha_2(AB)=|\det AB|=|\det A|\cdot|\det B|\\
&=\alpha_1(A)^{1+1/t}\alpha_1(B)^{1+1/t}.
\end{align*}
  Since $\alpha_1(A)^{1+1/t}=\alpha_2(A) \in (0,1)$, the singular values of every $A \in \mathcal{S}$ must be distinct.
Let us suppose firstly that $\mathcal{S}\subset GL_2^+(\mathbb{R})$. Fix $A_1 \in \mathcal{S}$ and let $A_2 \in \mathcal{S}$ be arbitrary; we will find a basis depending only on $A_1$ in which both matrices are diagonal.  Let $R_\theta$ denote the matrix of rotation through angle $\theta$. Taking singular value decompositions we may write $A_i=R_{\psi_i}D_iR_{\phi_i}$ for $i=1,2$, where $D_i$ is a positive diagonal matrix with entries equal to the singular values of $A_i$, listed in decreasing order down the diagonal. We have
\[\|D_i\|^2=\alpha_1(A_i)^2=\alpha_1(A_i^2)=\|A_i^2\|=\|R_{\psi_i}D_iR_{\phi_i}R_{\psi_i}D_iR_{\psi_i}\|=\|D_iR_{\phi_i}R_{\psi_i}D_i\|\]
which, since the entries of $D_i$ are distinct, is only possible if $R_{\phi_i}R_{\psi_i}$ is plus or minus the identity. Since similarly
\[\|D_1\|\cdot\|D_2\|=\alpha_1(A_1)\alpha_1(A_2)=\alpha_1(A_1A_2)=\|A_1A_2\|=\|D_1R_{\phi_1}R_{\psi_2}D_2\|\]
we must have $R_{\phi_1}R_{\psi_2}=\pm\mathrm{Id}=\pm R_{\phi_1}R_{\psi_1}=\pm R_{\phi_2}R_{\psi_2}$. We note the particular consequence $R_{\phi_2}=\pm R_{\phi_1}$. We deduce from these identities that
\[R_{\phi_1}A_1R_{\phi_1}^{-1} = R_{\phi_1}R_{\psi_1}D_1=\pm D_1,\]
\[R_{\phi_1}A_2R_{\phi_1}^{-1} = R_{\phi_1}R_{\psi_2}D_2R_{\phi_2}R_{\phi_1}^{-1}=\pm D_2\]
so that $A_1$ and $A_2$ are simultaneously diagonal, and moreover are hyperbolic. Since $R_{\phi_1}$ and $R_{\psi_1}$ depend only on $A_1$ it follows that $\mathcal{S}$ is simultaneously diagonalisable as claimed, and furthermore all of its elements are hyperbolic.

Now suppose that $\mathcal{S}\setminus GL_2^+(\mathbb{R})$ is nonempty. Applying the above argument we may find a basis in which every element of the semigroup $\mathcal{S}\cap GL_2^+(\mathbb{R})$ is diagonal and hyperbolic. In particular if $A \in \mathcal{S}\setminus GL_2^+(\mathbb{R})$ then $A^2 \in \mathcal{S}\cap GL_2^+(\mathbb{R})$. If the square of a $2\times 2$ matrix is diagonal and hyperbolic then so must be the original matrix, and it follows that in this basis every $A \in \mathcal{S}$ is diagonal as claimed.
\end{proof}

\begin{corollary} \label{co:bunching-to-Lyap-exponents}
Suppose $(A_1,\ldots,A_M)\in GL_2(\mathbb{R})^M$ is irreducible. If $\mu$ is an ergodic, fully supported measure on $\Sigma_M$, and if $\alpha_1(A_i)^\tau\leq \alpha_2(A_i)$ for every $i=1,\ldots,M$ and some $\tau>1$, then $\tau\lambda_1(\mu)<\lambda_2(\mu)$.
\end{corollary}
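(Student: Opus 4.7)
My plan is to reduce to the preceding lemma on simultaneous diagonalisability, and then use irreducibility to reach a contradiction.

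The first step is to extend the bunching hypothesis from generators to arbitrary finite products. For any word $\ii=(i_1,\ldots,i_n)$, submultiplicativity of $\alpha_1=\|\cdot\|$, supermultiplicativity of $\alpha_2=\|\cdot^{-1}\|^{-1}$ (which follows from submultiplicativity of $\|\cdot\|$ applied to inverses), and the hypothesis $\alpha_1(A_i)^\tau\le\alpha_2(A_i)$ on each generator combine to yield
\[
\alpha_1(A_\ii)^\tau \;\le\; \prod_{j=1}^n \alpha_1(A_{i_j})^\tau \;\le\; \prod_{j=1}^n \alpha_2(A_{i_j}) \;\le\; \alpha_2(A_\ii).
\]
Taking logarithms, integrating against $\mu$, dividing by $n$, and invoking the identifications of $\lambda_1(\mu),\lambda_2(\mu)$ as the appropriate infimum and supremum recalled in Section \ref{se:preliminaries}, we obtain the non-strict estimate $\tau\lambda_1(\mu)\le\lambda_2(\mu)$.

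To upgrade this to a strict inequality, I would argue by contradiction: assume $\tau\lambda_1(\mu)=\lambda_2(\mu)$. Since $\lambda_1(\mu)$ is the infimum and $\lambda_2(\mu)$ the supremum of the relevant normalised integrals, the chain
\[
\tau\lambda_1(\mu) \;\le\; \tfrac{\tau}{n}\int\log\alpha_1(A(x,n))\,d\mu \;\le\; \tfrac{1}{n}\int\log\alpha_2(A(x,n))\,d\mu \;\le\; \lambda_2(\mu) \;=\; \tau\lambda_1(\mu)
\]
collapses to equalities for every $n\ge 1$. Consequently the non-negative function $\log\alpha_2(A(x,n))-\tau\log\alpha_1(A(x,n))$ has zero $\mu$-integral. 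Since this function is constant on each $n$-cylinder of $\Sigma_M$ and $\mu$ is fully supported, every $n$-cylinder has positive mass, forcing the function to vanish identically. Thus $\alpha_1(A_\ii)^\tau=\alpha_2(A_\ii)$ for every finite word $\ii$, that is, for every element of the semigroup $\mathcal{S}$ generated by $A_1,\ldots,A_M$.

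Applying the preceding lemma with $t:=1/\tau\in(0,1)$ then forces $\mathcal{S}$ to be simultaneously diagonalisable, so all the $A_i$ share the two coordinate axes of the diagonalising basis as common invariant one-dimensional subspaces, contradicting the irreducibility of $(A_1,\ldots,A_M)$. The main obstacle in this argument is the passage from the integral equality to the pointwise equality $\alpha_1(A_\ii)^\tau=\alpha_2(A_\ii)$ for every finite word; this is exactly where the full-support hypothesis on $\mu$ is essential, after which the structural lemma does all the remaining work.
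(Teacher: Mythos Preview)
Your proof is correct and follows essentially the same route as the paper: extend the bunching inequality to all products, obtain the non-strict bound $\tau\lambda_1(\mu)\le\lambda_2(\mu)$, use full support to force $\alpha_1(A_\ii)^\tau=\alpha_2(A_\ii)$ for every word in the equality case, and then invoke the preceding lemma to contradict irreducibility. The only cosmetic difference is that the paper packages the ``equality forces every term to vanish'' step via subadditivity of the sequence $n\mapsto\int(\tau\log\alpha_1-\log\alpha_2)\,d\mu$, whereas you use the equivalent $\inf$/$\sup$ sandwich directly.
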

\begin{proof}
The sequence
\[
\int_{\Sigma_M} \tau\log\alpha_1(A_{x_n}\cdots A_{x_1}) -\log \alpha_2(A_{x_n}\cdots A_{x_1}) \,d\mu(x)
\]
is subadditive and bounded above by $0$, so its limit $\tau\lambda_1(\mu)-\lambda_2(\mu)$ is negative if and only if there exists an integer $n$ such that the above integral is negative. Since $\mu$ is fully supported, this occurs if and only if there exists an element $A$ of the semigroup such that $\alpha_1(A)^\tau<\alpha_2(A)$. Since the semigroup is irreducible the existence of such an element follows from the previous lemma.
\end{proof}

\begin{proof}[Proof of Theorem \ref{th:Lyap32-generalized}]
In light of Theorem \ref{th:application-Rapaport}, it is enough to show that, under the assumptions of the theorem,
\[
\adim(A_1,\ldots,A_M) + \mathrm{dim}_{S}\eta_\mu>2,
\]
where $\mu$ is the K\"aenm\"aki  measure, $\eta_\mu$ the corresponding Furstenberg measure. The claim is trivial if $\dim_S\eta_\mu=1$, so in the following we will assume $\dim_S\eta_\mu<1$.

We will suppose the conclusion to be false and deduce a strict upper bound of  $\tfrac{3(1-t)}{2-t}$ for the affinity dimension, which is a contradiction. Let $s\geq 1$ denote the affinity dimension. In light of Lemma \ref{le:affin-ge-2-OSC} we may assume that $s\in [1,2)$. The K\"aenm\"aki measure $\mu$ is an equilibrium state for $\varphi^s$ and therefore satisfies
\[h(\mu)+\lim_{n\to\infty} \frac{1}{n}\int \log \varphi^s(A(x,n))d\mu(x)=P(\varphi^s,A)\]
which is to say
\[h(\mu)+\lambda_1(\mu)+(s-1)\lambda_2(\mu)=0,\]
using the definition of the affinity dimension and the fact that $s\in [1,2)$. Hence,
\begin{equation}\label{eq:entropy}
h(\mu)=-\lambda_1(\mu)+(1-s)\lambda_2(\mu).
\end{equation}
By hypothesis we have
\[
s+\frac{h(\mu)}{\lambda_1(\mu)-\lambda_2(\mu)} \leq 2
\]
which is to say
\[
s\lambda_1(\mu)-s\lambda_2(\mu)+h(\mu) \leq 2\lambda_1(\mu)-2\lambda_2(\mu).
\]
Substituting in the value for the entropy given in \eqref{eq:entropy} yields
\[
(s-1)\lambda_1(\mu)+(1-2s)\lambda_2(\mu) \leq 2\lambda_1(\mu)-2\lambda_2(\mu)
\]
or equivalently
\[
(s-3)\lambda_1(\mu) \leq (2s-3)\lambda_2(\mu),
\]
from which we obtain
\[
\frac{\lambda_1(\mu)}{\lambda_2(\mu)} \leq \frac{3-2s}{3-s},
\]
where we note that $\lambda_2(\mu)$ and $s-3$ are both negative.
On the other hand, the assumption $\alpha_1(A_i) \le \alpha_2(A_i)^t$ together with Corollary \ref{co:bunching-to-Lyap-exponents} imply that $\lambda_1(\mu) < t \lambda_2(\mu)$. Recalling that $\lambda_2(\mu)<0$, we deduce that
\[
t < \frac{\lambda_1(\mu)}{\lambda_2(\mu)} \leq \frac{3-2s}{3-s},
\]
from which, solving for $s$, we get
\[
s < \frac{3(1-t)}{2-t},
\]
as desired.
\end{proof}

\subsection{Proof of Theorem \ref{th:HueterLalley}}
The proof of Theorem \ref{th:HueterLalley} is similar, except that we rely on Theorem \ref{th:Barany} instead.
\begin{proof}[Proof of Theorem \ref{th:HueterLalley}]
Let $\mu$ be the K\"{a}enm\"{a}ki measure. We know from Theorem \ref{th:properties-of-mu} that $\mu$ is globally supported, so $2\lambda_1(\mu)<\lambda_2(\mu)$ by Corollary \ref{co:bunching-to-Lyap-exponents}.

On the other hand, it is easy to check that $\ldim(\mu)\le h(\mu)/(-\log\lambda_1(\mu))$ by considering the cases $\ldim(\mu)\in (0,1]$ (in which there is equality), and $\ldim(\mu)>1$ separately. It follows that
\[
\ldim(\mu) < \frac{h(\mu)}{\lambda_1(\mu)-\lambda_2(\mu)} =: \tau.
\]

Now given a sufficiently small $\e>0$,  let $n,\Gamma,\nu$ be as provided by Theorem \ref{th:approx-by-Bernoulli}. Let $\eta_\nu$ the Furstenberg measure corresponding to $\nu$ and $(A_\ii:\ii\in\Gamma)$. If $\tau>1$, then arguing as in the proof of Theorem \ref{th:application-Rapaport}, by choosing $\e$ small enough we can ensure that $\dim\eta_\nu=1$. The claim then follows from Theorems \ref{th:approx-by-Bernoulli} and \ref{th:Barany} by letting $\e\to 0$. Otherwise, by picking $\e$ small enough we can ensure that
\[
\ldim(\nu) < \frac{h(\nu)}{\lambda_1(\nu)-\lambda_2(\nu)} = \dim(\eta_\nu),
\]
where the last equality follows from Corollary \ref{co:Hochman-Solomyak}. Hence Theorem \ref{th:Barany} is still applicable and the claim follows from Theorem \ref{th:approx-by-Bernoulli} by letting $\e\to 0$.
\end{proof}

\subsection{Projections of self-affine sets}

The problem of computing the dimension of projections of dynamically defined sets and measures has received a great deal of attention in the last decade, and the situation is fairly well understood in the self-similar setting, see e.g. \cite{Sh15, ShSo15} and references there. In the self-affine case, some results were obtained in the carpet case \cite{FeJoSh10, FeFrSa15}, but it was only very recently that Falconer and Kempton proved a result for projections of self-affine measures in a more general situation \cite{FaKe15b}. Their main results \cite[Theorem 3.1 and Corollary 3.2]{FaKe15b} hold for self-affine measures under the assumption that all matrices $A_i$ are strictly positive. In combination with the results in this article, we obtain:

\begin{theorem} \label{th:projections}
Suppose $T_i(x)=A_i x+v_i$, $i=1,\ldots,M$, satisfy the assumptions of either Theorem \ref{th:Lyap32}, Theorem \ref{th:Lyap32-generalized}, or Theorem \ref{th:HueterLalley}, and let $E$ be the associated self-affine set. Then for any linear map $P:\R^2\to \R$,
\[
\dim_H(PE) =\min(\dim_H(E),1).
\]
\end{theorem}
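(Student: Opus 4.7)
The upper bound $\dim_H(PE)\le\min(\dim_H(E),1)$ is immediate from $P$ being Lipschitz and $PE\subset\R$; since the hypotheses guarantee $\dim_H(E)=\adim(A)$ via whichever of Theorems~\ref{th:Lyap32}, \ref{th:Lyap32-generalized} or \ref{th:HueterLalley} is in force, what must be proved is the matching lower bound $\dim_H(PE)\ge\min(\adim(A),1)$. The plan is to combine three ingredients already at our disposal: (a) Theorem~\ref{th:approx-by-Bernoulli} to pass to a subsystem with simultaneously cone-preserving (hence, up to a change of basis, strictly positive) linear parts and with Lyapunov dimension close to $\adim(A)$; (b) Theorem~\ref{th:Barany} or Theorem~\ref{th:Rapaport} to promote this Lyapunov dimension to the genuine Hausdorff dimension of the associated self-affine measure; (c) the Falconer--Kempton projection theorem of \cite{FaKe15b}, which is applicable precisely when all linear parts are positive, to preserve that dimension under $P$.

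Concretely, given $\e>0$, I would apply Theorem~\ref{th:approx-by-Bernoulli} to obtain $n$, $\Gamma\subset\{1,\ldots,M\}^n$ and a Bernoulli measure $\nu$ on $\Gamma^\mathbb{N}$ with $\ldim(\nu,(A_\ii)_{\ii\in\Gamma})\ge\adim(A)-\e$ such that the matrices $\{A_\ii:\ii\in\Gamma\}$ strictly preserve a cone and are thus positive in some basis $B$, inherit strong irreducibility when present, and $(T_\ii)_{\ii\in\Gamma}$ satisfies the SSC. The remaining hypotheses on the original tuple---exponential separation and, in Theorems~\ref{th:HueterLalley} and~\ref{th:Lyap32-generalized}, the bunching inequalities on singular values---transfer routinely to the subsystem after passing to products. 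Hence Theorem~\ref{th:Barany} or Theorem~\ref{th:Rapaport} applies to the self-affine measure $\mu_\nu$ that $\nu$ pushes down to on the subsystem attractor $E_\Gamma\subset E$, yielding $\dim\mu_\nu=\ldim(\nu,(A_\ii)_{\ii\in\Gamma})\ge\adim(A)-\e$.

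I would then conjugate the subsystem IFS by $B$ so that its linear parts are all strictly positive and its attractor is $BE_\Gamma$ supporting the push-forward $B_\ast\mu_\nu$. By \cite[Theorem~3.1 and Corollary~3.2]{FaKe15b}, for every (or at least all but one) linear map $Q:\R^2\to\R$ we have
\[
\dim(Q_\ast B_\ast\mu_\nu)=\min(\dim\mu_\nu,1).
\]
Taking $Q:=PB^{-1}$ recovers $P_\ast\mu_\nu$ and gives $\dim(P_\ast\mu_\nu)\ge\min(\adim(A)-\e,1)$; since $P_\ast\mu_\nu$ is supported on $PE$, sending $\e\to 0$ proves the desired lower bound.

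The main obstacle I foresee is the potential ``exceptional direction'' in the Falconer--Kempton statement: if, even in the positive-matrix case, their theorem asserts the projection formula only for all but one $Q$, then the particular $Q=PB^{-1}$ determined by the given $P$ might happen to be the excluded one. My proposed workaround is to run the construction twice, producing two subsystems whose induced invariant objects on $\RP^1$ (for instance, the Furstenberg measures, or equivalently the stable and unstable directions of the typical cocycle) are manifestly different---achievable by pre- or post-composing the words in $\Gamma$ by two distinct fixed blocks of symbols. Since the exceptional direction, if any, is determined by these invariant objects, any prescribed $P$ will be non-exceptional for at least one of the two subsystems, and that subsystem delivers the required lower bound uniformly in $P$.
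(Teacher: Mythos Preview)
Your approach is essentially the paper's: pass to a cone-preserving, strongly irreducible, SSC subsystem via Theorem~\ref{th:approx-by-Bernoulli}, realise the Lyapunov dimension as the exact dimension of a self-affine measure via Theorem~\ref{th:Barany} or~\ref{th:Rapaport}, and then invoke the Falconer--Kempton projection theorem. The only divergence is your worry about an exceptional direction and the two-subsystem workaround you propose to handle it. This concern is unnecessary: you already recorded that the subsystem $(A_{\mathtt{i}})_{\mathtt{i}\in\Gamma}$ inherits (strong) irreducibility, and in \cite[Corollary~3.2]{FaKe15b} irreducibility forces the set $B$ of ``good'' directions to be all of $\mathbb{RP}^1$, so the projection formula holds for \emph{every} linear $P$. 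The paper's proof simply cites this fact and dispenses with any exceptional-direction analysis; your proposed workaround is not needed (and, as you have sketched it, it is not obvious that pre-/post-composing by fixed blocks would actually shift whatever exceptional direction there might be).
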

\begin{proof}
In the course of the proof of Theorems \ref{th:Lyap32-generalized} and \ref{th:HueterLalley}, it is shown that $\dim_H(E)$ (which equals the affinity dimension of $A$) can be approximated by $\ldim(\mu,(A_\ii)_{\ii\in\Gamma})$, where $\mu$ and $\Gamma\subset\{1,\ldots,M\}^n$ are given by Theorem \ref{th:SOSCtoSSC}. In particular, $(A_\ii)_{\ii\in\Gamma}$ is irreducible and, after a change of coordinates, all the $A_\ii,\ii\in\Gamma$ are strictly positive. Moreover, applying either Theorem \ref{th:Barany} or Theorem \ref{th:Rapaport}, we know that the self-affine measure $\nu$ corresponding to the system $(A_\ii)_{\ii\in\Gamma}$ and the Bernoulli measure $\mu$ satisfies
\[
\dim\nu = \ldim(\mu,(A_\ii)_{\ii\in\Gamma}).
\]
Since $(A_\ii)_{\ii\in\Gamma}$ is irreducible, the set $B$ appearing in \cite[Corollary 3.2]{FaKe15b} equals all of $\RP^1$. Therefore, we conclude from \cite[Corollary 3.2]{FaKe15b} that for any linear map $P:\R^2\to\R$,
\[
\dim_H(PE) \ge \dim(P\nu) = \min(\dim\nu,1) = \min(\ldim(\mu,(A_\ii)_{\ii\in\Gamma}),1).
\]
Since $\ldim(\mu,(A_\ii)_{\ii\in\Gamma})$ can be made arbitrarily close to $\dim_H(E)$, and the inequality $\dim_H(PE)\le \min(\dim_H E,1)$ holds for any set $E\subset\R^2$ and Lipschitz map $P$, the claim follows.
\end{proof}

\subsection{Concrete examples}\label{subs:concrete}
\label{ss:examples}

We now show how to apply our previous results to obtain many new explicit classes of self-affine sets for which Hausdorff and affinity dimensions coincide.  To the best of our knowledge, all previously known such examples fall into at least one of the following categories:
\begin{itemize}
  \item the maps are similarities,
  \item the maps are simultaneously diagonalizable,
  \item the maps strictly preserve a cone.
\end{itemize}
In many of our examples, the generated semigroup include both an elliptic and a hyperbolic matrix, so they provide genuinely new examples of equality of Hausdorff and affinity dimension.

We begin with two easy well-known lemmas which will help us verify the exponential separation condition in Theorems \ref{th:Lyap32} and \ref{th:HueterLalley}. We write $\text{sg}(A)$ for the semigroup generated by $A\subset GL_2(\R)$. By abuse of notation, given $A \in GL_2(\mathbb{R})$ we shall also write $A$ for the transformation of $\mathbb{RP}^1$ induced by $A$.
\begin{lemma} \label{le:free-sg-cones}
Let $A=(A_1,\ldots,A_M)\in GL_2(\mathbb{R})^M$. If there exists a nonempty set $K\subset \mathbb{RP}^1$ such that $A_i(K)\subset K$ and such that $A_i(K)\cap A_j(K)=\emptyset$ for all $i\neq j$, then the $A_i$ freely generate $\text{sg}(A)$.
\end{lemma}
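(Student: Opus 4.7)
The plan is to run the classical ping-pong argument, adapted to the semigroup setting. Suppose $A_\ii=A_\jj$ for two finite words $\ii=(i_1,\ldots,i_n)$ and $\jj=(j_1,\ldots,j_m)$; the goal is to deduce $\ii=\jj$. Using the paper's convention $A_\ii=A_{i_n}\cdots A_{i_1}$, iterating the inclusion $A_i(K)\subset K$ yields $A_\ii(K)\subset A_{i_n}(K)$ and $A_\jj(K)\subset A_{j_m}(K)$. Since these two images coincide and $K$ is nonempty, the disjointness hypothesis $A_i(K)\cap A_j(K)=\emptyset$ for $i\neq j$ forces $i_n=j_m$.

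The next step is to exploit the invertibility of the $A_i$ in $GL_2(\R)$: left-multiplying the identity $A_\ii=A_\jj$ by $A_{i_n}^{-1}$ produces an equation between strictly shorter words, to which the same argument may be applied. Iterating this peeling-off $\min(n,m)$ times, one is reduced (without loss of generality) to an equation of the form $\mathrm{Id}=A_\kk$ for a word $\kk$ of length $|n-m|$, and it remains to rule out $|n-m|>0$.

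The final ingredient is the observation that the disjointness hypothesis automatically upgrades $A_i(K)\subset K$ to strict inclusion: assuming $M\geq 2$ (the case $M=1$ being trivial since there is nothing to check), any $A_j(K)$ with $j\neq i$ is a nonempty subset of $K$ disjoint from $A_i(K)$, so $A_i(K)\neq K$. Consequently, for any nontrivial word $\kk$, one has $A_\kk(K)\subset A_{k_{|\kk|}}(K)\subsetneq K$, which contradicts $A_\kk=\mathrm{Id}$ and completes the proof.

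The argument is essentially routine once the ping-pong mechanism is recognised; no step constitutes a serious obstacle. The only mild bookkeeping issue is the reversed composition convention $A_\ii=A_{i_n}\cdots A_{i_1}$, which determines whether one peels letters from the left or the right end of each word, but does not affect the substance of the argument.
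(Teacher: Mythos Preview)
Your argument is correct and uses the same ping-pong mechanism as the paper's, with a minor tactical difference in how unequal-length words are handled. The paper replaces $\ii,\jj$ by the concatenations $\ii\jj,\jj\ii$ to force equal length, then cancels the longest common prefix and obtains the contradiction from two words whose first symbols differ. You instead peel off matching final letters one at a time using invertibility and then dispose of the residual case $A_\kk=\mathrm{Id}$ via the strict-inclusion observation $A_i(K)\subsetneq K$. Your route is in fact slightly more explicit: the paper's concatenation trick tacitly assumes $\ii\jj\neq\jj\ii$, and in the corner case where $\ii$ and $\jj$ are powers of a common word one is driven back to exactly the identity case that you treat directly.
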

\begin{proof}
Suppose $A_\ii=A_\jj$ with $\ii\neq \jj$. By replacing $\ii $ with $\ii\jj$ and $\jj$ with $\jj \ii$ if necessary we may assume the words $\ii$ and $\jj$ to have equal length. Since $\ii \neq \jj$ we may write $\ii= \kk \lll$, $\jj = \kk \lll'$ where $\lll_1 \neq \lll_1'$ (with the length of $\kk$ possibly being zero). We then have $A_\lll = A_{\lll'}$ and  $\lll_1 \neq \lll_1'$. For $x\in K$ we have $A_\lll(x)\in A_{\lll_1}(K)$ and $A_{\lll'}(x)\in A_{\lll_1'}(K)$, contradicting $A_\lll(x)=A_{\lll'}(x)$.
\end{proof}

\begin{lemma} \label{le:free-sg-conjugates}
Let $P_{ij}^{(k)}\in \mathbb{Q}[x]$ for $1\le i,j\le 2$, $1\le k\le M$. For $t\in\R$, let
\[
A_k(t) = \left(P_{ij}^{(k)}(t)\right)_{ij},
\]
\[
A(t) = \{ A_k(t) \}_{k=1}^M.
\]
If $t$ is an algebraic number such that $\text{sg}(A(t))$ is freely generated by $A_1(t),\ldots,A_m(t)$, then $\text{sg}(A(u))$ is freely generated by $A_1(u),\ldots,A_m(u)$, for any Galois conjugate $u$ of $t$.
\end{lemma}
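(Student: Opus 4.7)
The plan is to exploit two elementary facts: products of polynomial-valued matrices have polynomial entries, and a polynomial in $\mathbb{Q}[x]$ that vanishes at an algebraic number vanishes at every Galois conjugate of that number.

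First I would observe that for each nonempty word $\ii = (i_1,\ldots,i_n) \in \{1,\ldots,M\}^n$, the matrix $A_\ii(x) := A_{i_n}(x)\cdots A_{i_1}(x)$ has all four entries in $\mathbb{Q}[x]$, since $\mathbb{Q}[x]$ is a ring. Consequently, for any two words $\ii,\jj$ the difference $Q_{\ii,\jj}(x) := A_\ii(x) - A_\jj(x)$ is a $2\times 2$ matrix whose entries lie in $\mathbb{Q}[x]$.

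The main step is a contradiction argument. Suppose $A_1(u),\ldots,A_M(u)$ do not freely generate $\text{sg}(A(u))$, so there exist distinct nonempty finite words $\ii \ne \jj$ with $A_\ii(u) = A_\jj(u)$. Then every entry $Q_{ab}(x)$ of $Q_{\ii,\jj}(x)$ is an element of $\mathbb{Q}[x]$ which vanishes at $u$. Letting $m(x) \in \mathbb{Q}[x]$ denote the minimal polynomial of $u$ over $\mathbb{Q}$, it follows that $m \mid Q_{ab}$ for each $a,b$. Because $t$ and $u$ are Galois conjugates they share the same minimal polynomial $m$, so $m(t) = 0$ and hence $Q_{ab}(t) = 0$ for each $a,b$. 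This gives $A_\ii(t) = A_\jj(t)$, contradicting freeness at $t$.

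The argument is essentially formal, and there is no real obstacle: the content is simply the observation that polynomial relations among matrices whose entries depend rationally on an algebraic parameter are Galois-equivariant in that parameter. The only mild point worth emphasizing is that one should phrase freeness of the semigroup as the injectivity of the word-evaluation map on \emph{nonempty} words, which is exactly the form in which the above entry-wise vanishing argument applies.
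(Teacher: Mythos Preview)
Your proof is correct and follows essentially the same approach as the paper's: both observe that the entries of $A_\ii(x)-A_\jj(x)$ lie in $\mathbb{Q}[x]$ and use that a rational polynomial vanishing at one Galois conjugate vanishes at all of them. You simply spell out the minimal-polynomial divisibility step that the paper leaves implicit.
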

\begin{proof}
Given distinct finite words $\ii,\jj$, the function $x\mapsto A_\ii(x)-A_\jj(x)$ is a polynomial with rational coefficients which does not vanish at $t$, hence it does not vanish at $u$ either.
\end{proof}

The above lemmas imply that, for a given $M\ge 2$, the set
\[
\{A=(A_1,\ldots,A_M)\in GL_2(\R)^M: \text{sg}(A) \text{ is free } \}
\]
is dense in $GL_2(\R)^M$. Indeed, we can start with an arbitrary $B=(B_1,\ldots,B_M)$ robustly generating a free semigroup (it is easy to construct examples with the help of Lemma \ref{le:free-sg-cones}), and apply Lemma \ref{le:free-sg-conjugates} with suitable quadratic $t,u$ and linear $P_{ij}^{(k)}$. To see this, note that e.g. $\{ (a+b\sqrt{2},a-b\sqrt{2}):a\in\mathbb{Q},b\in\mathbb{Q}^+\}$ is dense in $\R^2$. In particular, one can easily construct examples satisfying the conditions of the following corollary.
\begin{corollary} \label{co:examples}
  Let $(A_1,\ldots,A_M)\in GL_2(\R)$ have algebraic coefficients, and freely generate a free semigroup which contains both an elliptic and a hyperbolic matrix. Suppose $(r_1,\ldots,r_M)$ are such that either:
  \begin{enumerate}
    \item[(a)] $\adim(r_1 A_1,\ldots, r_M A_M)\in [3/2,2]$, or
    \item[(b)] $|r_i| \alpha_1(A_i)^2\le \alpha_2(A_i)$ for all $i$.
  \end{enumerate}

  Then, for every $v_1,\ldots,v_M$ such that $(T_i(x)=r_i A_i x+v_i)_{i=1}^M$ satisfies the strong open set condition, the invariant set $E$ for $T_i$ has equal Hausdorff and affinity dimensions.

  Moreover, $\dim_H(PE)=\min(\dim_H E,1)$ for all linear maps $P:\R^2\to\R$.
\end{corollary}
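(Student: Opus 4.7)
The plan is to verify that the IFS $T_i(x)=r_iA_ix+v_i$ satisfies the hypotheses of Theorem~\ref{th:Lyap32} (under (a)) or Theorem~\ref{th:HueterLalley} (under (b)), and then invoke Theorem~\ref{th:projections} for the statement about projections. The SOSC is assumed directly. The affinity-dimension bound of Theorem~\ref{th:Lyap32}(4) is hypothesis (a), and the bunching hypothesis of Theorem~\ref{th:HueterLalley}, namely $\alpha_1(r_iA_i)^2\le\alpha_2(r_iA_i)$, becomes exactly (b) after using $\alpha_j(r_iA_i)=|r_i|\alpha_j(A_i)$. The semigroup generated by the $r_iA_i$ still contains a hyperbolic element, because scaling by a nonzero scalar preserves the property of having two real eigenvalues of distinct moduli; the same remark shows it contains an elliptic element.

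What remains to check is strong irreducibility and exponential separation of $(r_1A_1,\ldots,r_MA_M)$. Both properties are stable under nonzero scalar multiplication of each matrix: scalings preserve any invariant subspace or finite union of subspaces, and the stability of exponential separation is the observation recorded in the introduction and formalized inside the proof of Corollary~\ref{co:Hochman-Solomyak}. It therefore suffices to prove these two properties for the $A_i$. Exponential separation of $(A_1,\ldots,A_M)$ follows from the hypothesis that the entries are algebraic together with freeness of the generated semigroup, by \cite[Lemma~6.1]{HoSo15}.

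Strong irreducibility I would prove by contradiction, excluding the two degenerate possibilities. If the $A_i$ shared a common invariant line, then in an adapted basis they would all be upper triangular, so the entire semigroup would consist of upper triangular matrices; these have real spectra and cannot be elliptic, contradicting the hypothesis that an elliptic matrix lies in the semigroup. Otherwise, by Lemma~\ref{le:irr-not-strong-irr}, a change of basis puts each $A_i$ into diagonal or anti-diagonal form, with both types occurring. But then the square of any anti-diagonal matrix is a scalar multiple of the identity, so $A_j^2$ is central for every anti-diagonal generator $A_j$; since the presence of both an elliptic and a hyperbolic element in the semigroup forces $M\ge 2$ (a single contraction cannot produce both through its powers), for any other generator $A_i\ne A_j$ one obtains the nontrivial positive-word relation $A_j^2A_i=A_iA_j^2$, contradicting freeness. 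With all hypotheses verified, Theorem~\ref{th:Lyap32} or Theorem~\ref{th:HueterLalley} yields $\dim_H E=\adim(T_1,\ldots,T_M)$, and Theorem~\ref{th:projections} supplies the equality $\dim_H(PE)=\min(\dim_H E,1)$. I expect the main technical point to be the strong irreducibility argument; once the upper-triangular and (anti-)diagonal alternatives are isolated, each is dispatched by a direct observation, the decisive one being that the elliptic-element hypothesis rules out the upper-triangular case while freeness rules out the (anti-)diagonal one.
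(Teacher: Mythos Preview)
Your proof is correct and follows the same route as the paper's: verify the hypotheses of Theorems~\ref{th:Lyap32} and \ref{th:HueterLalley}, then apply Theorem~\ref{th:projections}. You actually give considerably more detail than the paper on the strong irreducibility step --- the paper simply asserts ``since the semigroup acts freely and contains an elliptic element, it is strongly irreducible'' without further justification, whereas you spell out the dichotomy and eliminate each branch.

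One small technical point: your appeal to Lemma~\ref{le:irr-not-strong-irr} is not quite licensed as stated, since that lemma assumes one of the \emph{generators} $A_i$ is hyperbolic, whereas the corollary assumes only that the generated semigroup contains a hyperbolic matrix (and indeed in the paper's own concrete example at the end of \S\ref{subs:concrete} both generators are elliptic). The fix is immediate and uses the same idea as the lemma's proof: if $B$ is any hyperbolic product of the $A_i$, then $B$ permutes the finite invariant set $\mathcal{L}$ of lines, so some power $B^k$ fixes each element of $\mathcal{L}$; since $B^k$ is hyperbolic with exactly two real eigenlines, $|\mathcal{L}|\le 2$, and irreducibility forces $|\mathcal{L}|=2$. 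Diagonalising $B$ then puts every $A_i$ into diagonal or anti-diagonal form, and your freeness contradiction via $A_j^2 A_i = A_i A_j^2$ goes through unchanged.
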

\begin{proof}
Since the semigroup acts freely and contains an elliptic element, it is strongly irreducible, and it is non-compact thanks to the hyperbolic matrix. Moreover, it is proved in \cite[Lemma 6.1]{HoSo15} that exponential separation holds when the semigroup acts freely and the coefficients are algebraic. As shown in the proof of Corollary \ref{co:Hochman-Solomyak}, the tuples $(r_i A_i)$ also have exponential separation. The corollary then follows from Theorems \ref{th:Lyap32}, \ref{th:HueterLalley} and \ref{th:projections}.
\end{proof}
\begin{figure}\label{fi:one}
    \centering
    \includegraphics[width=0.8\linewidth]{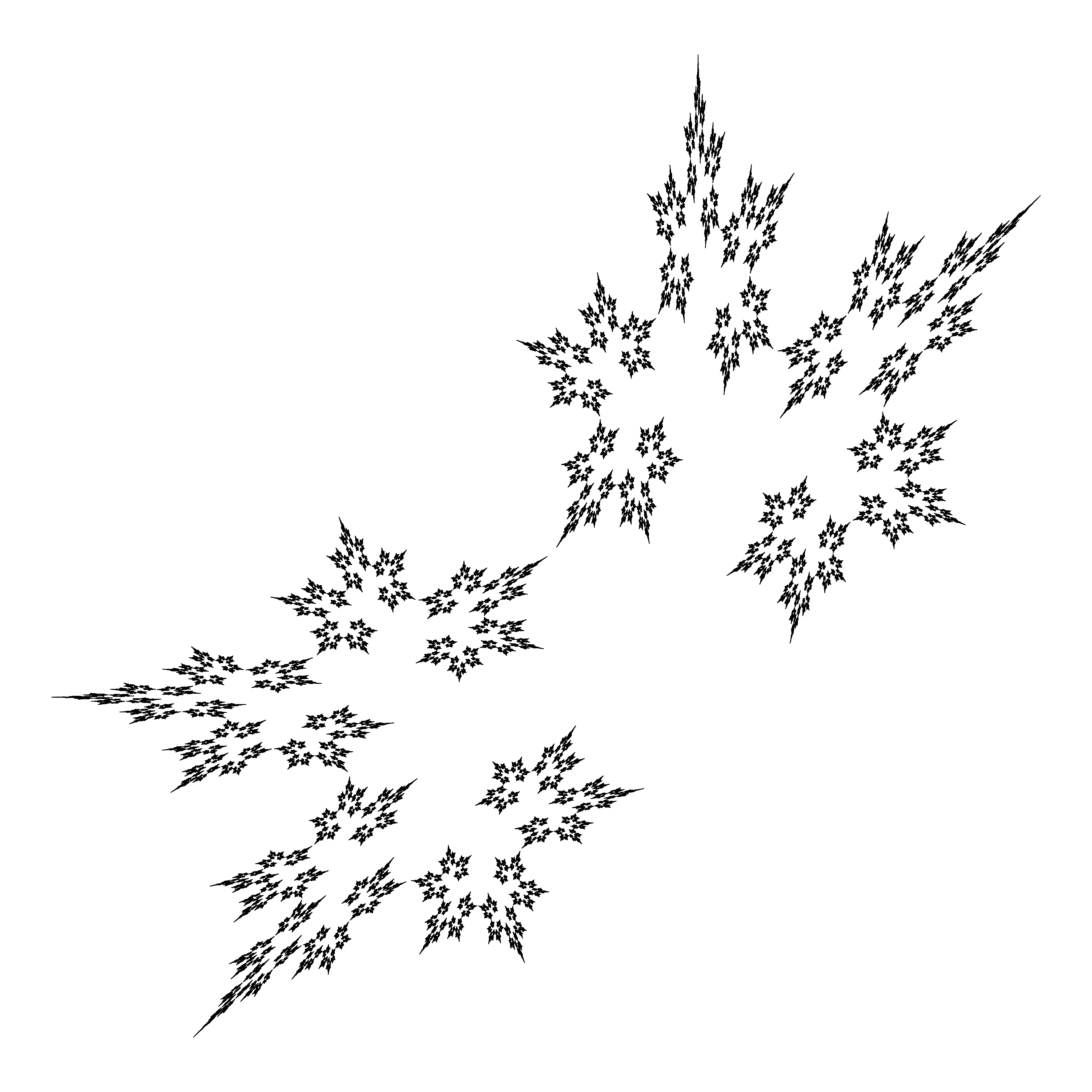}
    \caption{This self-affine set is the attractor of an iterated function system $(T_1,T_2)$ defined at the end of \S\ref{subs:concrete}. It has Hausdorff dimension equal to the affinity dimension of its defining iterated function system. The value of the affinity dimension is unknown, but exceeds $\frac{3}{2}$.}
    \end{figure}
For completeness let us give an explicit example of a pair of elliptic matrices $(A_1,A_2)$ which satisfy the hypotheses of Corollary \ref{co:examples}. Define
\[B_1(t):=\left(\begin{array}{cc}1&t\\1&0\end{array}\right),\qquad B_2(t):=\left(\begin{array}{cc}0&1\\t&1\end{array}\right)\]
for every real number $t$. Obviously we have
\[B_1(\sqrt{2})=\left(\begin{array}{cc}1&\sqrt{2}\\1&0\end{array}\right),\qquad B_2(\sqrt{2})=\left(\begin{array}{cc}0&1\\\sqrt{2}&1\end{array}\right).\]
It is clear that $B_1(\sqrt{2})$ and $B_2(\sqrt{2})$ both preserve the open positive quadrant in $\mathbb{R}^2$ and map that quadrant to two disjoint image cones, one lying above the diagonal in $\mathbb{R}^2$ and the other below it. A simple application of Lemma \ref{le:free-sg-cones} shows that $(B_1(\sqrt{2}),B_2(\sqrt{2}))$ freely generates a free semigroup and hence by Lemma \ref{le:free-sg-conjugates} so does $(B_1(-\sqrt{2}),B_2(-\sqrt{2}))$. Let us therefore define
\[A_1:=B_1(-\sqrt{2})= \begin{pmatrix}1&-\sqrt{2}\\1&0\end{pmatrix}, \qquad A_2:=B_2(-\sqrt{2})=\begin{pmatrix}0&1\\-\sqrt{2}&1\end{pmatrix}\]
and
\[v_1:=\begin{pmatrix}-1\\-1\end{pmatrix},\qquad v_2:=\begin{pmatrix}1\\1\end{pmatrix}.\]
The matrices $A_1$ and $A_2$ each have non-real eigenvalues $\frac{1}{2}\pm \frac{i}{2}\sqrt{4\sqrt{2}-1}$. On the other hand $A_1A_2$ has unequal real eigenvalues $1$ and $2$. The reader may easily check that $A_1$ and $A_2$ both have norm $\sqrt{2+\sqrt{2}}<2^{\frac{11}{12}}$. It follows that if we define two affine transformations of $\mathbb{R}^2$ by $T_ix:=2^{-\frac{11}{12}} A_ix+v_i$ for $i=1,2$ then each $T_i$ is a contraction. The reader may easily verify that $\sum_{i=1}^2 |\det (2^{-\frac{11}{12}}A_i)|^{\frac{3}{4}}=1$ and therefore $\adim (T_1,T_2)\geq \frac{3}{2}$. The pair $(T_1,T_2)$ satisfies the Strong Separation Condition (see Figure \ref{fi:one}) and therefore the hypotheses of Corollary \ref{co:examples} are satisfied.

\section{The irreducible but not strongly irreducible case}
\label{se:irreducible-case}

In this section we work with systems of the form $(A_1,\ldots, A_M)$ such that, for some $\ell\in \{1,\ldots,M-1\}$,
\begin{equation} \label{eq:irr-not-strong-irr}
\begin{aligned}
A_i &= \left(
        \begin{array}{cc}
          a_i & 0 \\
          0 & b_i \\
        \end{array}
      \right)  \quad\text{if } 1\le i \le \ell,\\
A_i &= \left(
        \begin{array}{cc}
          0 & c_i \\
          d_i & 0 \\
        \end{array}
      \right)  \quad\text{if } \ell+1\le i \le M.
\end{aligned}
\end{equation}
Recall from Lemma \ref{le:irr-not-strong-irr} that if $(A_1,\ldots,A_M)$ is irreducible but not strongly irreducible, then after a change of coordinates and re-ordering it does have the above form. It was shown recently in \cite{Mo17} that the affinity dimension of this type of system is remarkably easy to calculate.

Given invertible affine contractions $(T_1,\ldots,T_M)$ with $T_i=A_i x+v_i$, the attractor $E$ is a carpet of the type investigated by J. Fraser in \cite{Fr12}. Note, however, that Fraser only studied the packing and box-counting dimensions of these carpets. Here we investigate their Hausdorff dimension. We underline that, even in the diagonal case, it is well known that the Hausdorff dimension can be strictly smaller than the packing/box-counting and affinity dimensions, even under the strong separation condition. The only known mechanism for this dimension drop is an exact overlap in some coordinate projection.

We start by showing that, as a corollary of our main technical results, one can approximate the affinity dimension by the Lyapunov dimension of Bernoulli measures on a diagonal subsystem.
\begin{proposition} \label{prop:reduction-to-Bernoulli-irr-but-not-strong-irr}
Let $(T_1,\ldots,T_M)$ be invertible affine contractions, where $T_i x=A_i x+v_i$, and the $A_i$ have the form \eqref{eq:irr-not-strong-irr}. Assume also that $\adim(A_1,\ldots,A_M)<2$.

Then for every $\e>0$ there exist $n$, and a set $\Gamma\subset \{1,\ldots,M\}^n$, such that if $\nu$ is the uniform Bernoulli measure on $\Sigma_\Gamma$, then the  following hold:
\begin{enumerate}
\item The matrices $A_\jj, \jj\in\Gamma$, are diagonal, orientation-preserving, and strictly preserve a cone.
\item $\ldim(\nu) \ge \adim(A_1,\ldots,A_M)-\e$.
\item The measure $\nu$ has distinct Lyapunov exponents.
\item If $(T_1,\ldots,T_M)$ satisfies the strong open set condition, then $(T_{\mathtt{j}}:\mathtt{j}\in\Gamma)$ satisfies the strong separation condition.
\end{enumerate}
\end{proposition}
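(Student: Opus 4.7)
My plan is to apply Theorem \ref{th:monster} and (for the last claim) Theorem \ref{th:SOSCtoSSC} to the K\"aenm\"aki measure $\mu$ of $(A_1,\ldots,A_M)$, and then to exploit the special form \eqref{eq:irr-not-strong-irr} to upgrade the resulting conclusion to diagonal, cone-preserving products. First, because both diagonal and anti-diagonal matrices appear, the tuple is irreducible: the only projective lines fixed by every diagonal $A_i$ are the coordinate axes, and no anti-diagonal matrix preserves them. So Proposition \ref{prop:Kaenmaki} gives a unique ergodic K\"aenm\"aki measure $\mu$. After replacing $(A_1,\ldots,A_M)$ by a fixed iterate if necessary we may further assume the tuple contains a hyperbolic element (in the non-trivial case such an element arises already from a product of two anti-diagonal matrices with distinct ratios $|c_i/d_i|$, or from $A_jA_iA_j$ with $A_i$ a non-conformal diagonal matrix), so Theorem \ref{th:properties-of-mu} applies and $\mu$ has distinct Lyapunov exponents. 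Theorems \ref{th:monster} and \ref{th:SOSCtoSSC} then yield, for any prescribed tolerance $\e'>0$, an integer $n$ and a set $\Gamma\subset\{1,\ldots,M\}^n$ satisfying all the conclusions of those theorems.

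The key new observation is that every $A_\jj$, $\jj\in\Gamma$, is forced to be diagonal. Each factor of $A_\jj$ is diagonal or anti-diagonal, so $A_\jj$ itself is diagonal (when the number of anti-diagonal factors is even) or anti-diagonal (otherwise). By Theorem \ref{th:monster}(iv), $\det A_\jj>0$; but an anti-diagonal matrix $\left(\begin{smallmatrix}0&c\\d&0\end{smallmatrix}\right)$ of positive determinant must have $cd<0$, so its eigenvalues $\pm i\sqrt{|cd|}$ are purely imaginary and its action on $\mathbb{RP}^1$ is an elliptic M\"obius transformation with no real fixed point. Such a map cannot strictly preserve any closed projective interval, contradicting the cone preservation of Theorem \ref{th:monster}(ii). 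Hence $A_\jj$ is diagonal, and being orientation-preserving its two entries share a common sign; the cone $\mathcal{C}$ from Theorem \ref{th:monster}(ii) is strictly preserved by $A_\jj$, which is claim (1).

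Claims (2) and (3) now follow by repeating verbatim the calculation at the end of Section \ref{se:SOSCtoSSC}: Theorem \ref{th:monster}(i), (iii), (iv) give, for the uniform Bernoulli measure $\nu$ on $\Sigma_\Gamma$, the bounds $h(\nu)\geq n(h(\mu)-\e')$, $|\lambda_1(\nu)-n\lambda_1(\mu)|\leq n\e'$, and $|\lambda_1(\nu)+\lambda_2(\nu)-n(\lambda_1(\mu)+\lambda_2(\mu))|\leq n\e'$, whence $\lambda_2(\nu)$ lies within $2n\e'$ of $n\lambda_2(\mu)$. Since $\lambda_1(\mu)\neq\lambda_2(\mu)$, choosing $\e'$ small enough in terms of $\e$ and the spectrum of $\mu$ both keeps the Lyapunov exponents of $\nu$ distinct (claim (3)) and ensures $\ldim(\nu)\geq\adim(A)-\e$ (claim (2)). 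Claim (4) is Theorem \ref{th:SOSCtoSSC}.

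The only genuinely new ingredient is the diagonality argument of the second paragraph, which combines the orientation-preservation and cone-preservation guaranteed by Theorem \ref{th:monster} with the algebraic structure of the anti-diagonal factors; once this is in hand, the rest is a routine specialisation of the proof of Theorem \ref{th:approx-by-Bernoulli}.
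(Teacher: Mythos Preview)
Your proposal is correct and follows essentially the same route as the paper: apply Theorem~\ref{th:properties-of-mu} to the K\"aenm\"aki measure, invoke Theorem~\ref{th:monster} (or Theorem~\ref{th:SOSCtoSSC} for the last claim), and then observe that the cone-preservation plus positive-determinant conclusions force each $A_{\jj}$ to be diagonal rather than anti-diagonal. Your diagonality argument (anti-diagonal with $\det>0$ has imaginary eigenvalues, hence no projective fixed point, hence cannot strictly preserve an arc) spells out what the paper compresses into the single clause ``anti-diagonal ones do not preserve a cone,'' and your derivation of (3) from the Lyapunov-exponent estimates is an acceptable alternative to the paper's appeal to domination; your extra care about securing a hyperbolic element before invoking Theorem~\ref{th:properties-of-mu} is a point the paper leaves implicit from the context of Lemma~\ref{le:irr-not-strong-irr}.
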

\begin{proof}
Let $\mu$ be the K\"{a}enm\"{a}ki measure for $(A_1,\ldots,A_M)$. We know from Theorem \ref{th:properties-of-mu} that $\mu$ is fully supported and has different Lyapunov exponents. Hence $\mu$  meets the hypothesis of Theorem \ref{th:monster}. Let $\nu$ be the uniform Bernoulli measure on $\Sigma_\Gamma$. It follows from parts (i), (iii) and (iv) of Theorem \ref{th:monster} and a short calculation that
\[
\ldim(\nu) \ge \ldim(\mu) - O(\e) = \ldim(\mu) - O(\e).
\]
If the $T_i$ satisfy the SOSC, we can apply Theorem \ref{th:SOSCtoSSC} to ensure that $(T_{\mathtt{j}}:\mathtt{j}\in \Gamma)$ satisfies the SSC.

To conclude, note that the matrices $A_\jj,\jj\in\Gamma$ must be diagonal since anti-diagonal ones do not preserve a cone, and $\nu$ has different Lyapunov exponents by domination.
\end{proof}

The advantage of the above proposition is that diagonally self-affine sets and measures are much better understood; see \cite{Ba15, BaRaSi15, FrSh15} for some recent advances, most of which rely on Hochman's results \cite{Ho14}. The principal projections play a key r\^{o}le in the diagonal case (since one of them, or both, are atoms for the Furstenberg measure); let $P_x, P_y$ denote projection onto the corresponding coordinate axis. We give two concrete applications of Proposition \ref{prop:reduction-to-Bernoulli-irr-but-not-strong-irr}.

\begin{proposition} \label{pr:irr-not-strong-irr-algebraic}
Let $T_i x=A_i x+v_i$, $i=1,\ldots,M$, be invertible affine contractions, with $A_i$ of the form \eqref{eq:irr-not-strong-irr}, and let $E$ be the corresponding self-affine set. Suppose that:
\begin{enumerate}
\item All coefficients of $A_i$ and $v_i$ are algebraic.
\item For any $n\in\mathbb{N}$ and any $\ii\neq \jj\in\{1,\ldots,M\}^n$ such that $T_{\ii}$ and $T_{\jj}$ have the same orientation, $P_x T_{\ii}(0)\neq P_x T_{\jj}(0)$ and $P_y T_{\ii}(0)\neq P_y T_{\jj}(0)$.
\item The strong open set condition holds.
\end{enumerate}
Then $\dim_H(E) = \adim(T_1,\ldots,T_M)$.
\end{proposition}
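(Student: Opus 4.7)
The strategy is to reduce to a diagonal, orientation-preserving subsystem via Proposition \ref{prop:reduction-to-Bernoulli-irr-but-not-strong-irr}, deduce exponential separation for both coordinate projections of the subsystem from condition (2) together with the algebraicity hypothesis, and invoke the known Hausdorff dimension formula for diagonal self-affine measures with exponentially separated coordinate projections.

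Fix $\e > 0$ and apply Proposition \ref{prop:reduction-to-Bernoulli-irr-but-not-strong-irr} to produce $n$, $\Gamma \subset \{1,\ldots,M\}^n$, and the uniform Bernoulli measure $\nu$ on $\Sigma_\Gamma$ with the listed properties; in particular, the maps $\{A_\jj : \jj \in \Gamma\}$ are diagonal in the original coordinates and orientation-preserving, $(T_\jj : \jj \in \Gamma)$ satisfies the SSC, and $\ldim(\nu) \ge \adim(T_1,\ldots,T_M) - \e$. Since each $A_\jj$ for $\jj \in \Gamma$ is diagonal and orientation-preserving, any concatenation $\jj_1\cdots\jj_k \in \Gamma^k$ also gives a diagonal orientation-preserving matrix; two distinct concatenations are then distinct words of the same orientation in $\{1,\ldots,M\}^{kn}$, so condition (2) yields $P_x T_{\jj_1\cdots\jj_k}(0) \neq P_x T_{\jj'_1\cdots\jj'_k}(0)$ and the analogous inequality for $P_y$. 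Consequently neither of the two self-similar IFSs $(P_x T_\jj : \jj \in \Gamma)$ and $(P_y T_\jj : \jj \in \Gamma)$ on $\mathbb{R}$ has an exact overlap.

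The coefficients of these projected IFSs are algebraic, so by the algebraic case of Hochman's theorem (as stated in e.g.\ \cite[Lemma 6.1]{HoSo15}), the absence of exact overlaps upgrades to exponential separation for both projected self-similar systems. With this input, the diagonal self-affine measure $\nu$ has both coordinate projections $P_x\nu$ and $P_y\nu$ realized as self-similar measures with exponential separation, whose dimensions are therefore given by Hochman's formula. Combined with the Ledrappier-Young-type analysis for diagonal self-affine measures developed in \cite{Ba15, BaRaSi15} together with the exact dimensionality of planar self-affine measures from \cite{BaKa15}, this forces $\dim \nu = \ldim(\nu,(A_\jj)_{\jj\in\Gamma})$. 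Since $\nu$ is supported on the attractor of $(T_\jj:\jj\in\Gamma)$, which is contained in $E$, we obtain $\dim_H E \ge \adim(T_1,\ldots,T_M) - \e$, and letting $\e \to 0$ concludes the proof together with the standard Falconer upper bound $\dim_H E \le \adim(T_1,\ldots,T_M)$.

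The main obstacle is matching the precise form of the Ledrappier-Young dimension formula for diagonal self-affine measures to the setting produced by Proposition \ref{prop:reduction-to-Bernoulli-irr-but-not-strong-irr}. The formula splits into cases depending on which of the two coordinate directions is dominant and on which of the three regimes the Lyapunov dimension falls into, and one must verify that in each case the dimension drop can only come from an exact overlap in one of the coordinate projections, so that our exponential separation rules it out. The cone-preservation property of $\{A_\jj:\jj\in\Gamma\}$ uniformly identifies the dominant coordinate and ensures distinct Lyapunov exponents for $\nu$, which should make this matching go through cleanly; nonetheless, the careful cross-referencing of hypotheses with the existing diagonal-carpet literature is the most delicate step.
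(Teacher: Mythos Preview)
Your proposal follows essentially the same route as the paper's proof: reduce via Proposition~\ref{prop:reduction-to-Bernoulli-irr-but-not-strong-irr} to a diagonal SSC subsystem, use algebraicity plus condition~(2) to control the coordinate projections, and then conclude $\dim\nu=\ldim(\nu)$ via the Ledrappier--Young analysis for diagonal systems. The argument is correct, but you are doing more work than necessary and the ``main obstacle'' you flag at the end is not actually an obstacle.

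The simplification is this. Since the matrices $A_{\jj}$, $\jj\in\Gamma$, are diagonal and strictly preserve a cone, the Lyapunov exponents of $\nu$ are distinct and the Furstenberg measure is a Dirac mass at a single coordinate axis --- without loss of generality the horizontal one, corresponding to $\lambda_1(\nu)$. You therefore only need to analyse the single projection $P_x\nu$: it is a self-similar measure with algebraic coefficients and no exact overlaps (by condition~(2)), so Hochman's theorem \cite[Theorem~1.1 and Lemma~5.10]{Ho14} gives $\dim P_x\nu=\min\bigl(h(\nu)/(-\lambda_1(\nu)),1\bigr)$ directly. (The correct reference here is \cite{Ho14}, not \cite[Lemma~6.1]{HoSo15}, which concerns $SL_2(\mathbb{R})$-cocycles rather than one-dimensional self-similar IFSs.) With the Furstenberg measure being a single atom at the horizontal direction, \cite[Theorem~2.7]{Ba15} then yields $\dim\nu=\ldim(\nu)$ immediately, with no case analysis and no need to examine $P_y\nu$ at all. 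The paper proceeds exactly this way. Your symmetric treatment of both coordinates and the appeal to \cite{BaRaSi15, BaKa15} are correct in spirit but unnecessary, and the case-splitting you anticipate in your final paragraph dissolves once you observe that the cone-preservation fixes the dominant direction once and for all.
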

\begin{proof}
As usual, we can assume that $\adim(A_1,\ldots,A_M)<2$. Given $\e>0$, let $\Gamma$ and $\nu$ be as given by Proposition \ref{prop:reduction-to-Bernoulli-irr-but-not-strong-irr}. Since hypotheses (1), (2) pass to subsets of iterates, they hold also for the diagonal system $(T_{\mathtt{j}}:\jj\in\Gamma)$.

\begin{figure}\label{fi:three}
    \centering
    \includegraphics[width=0.8\linewidth]{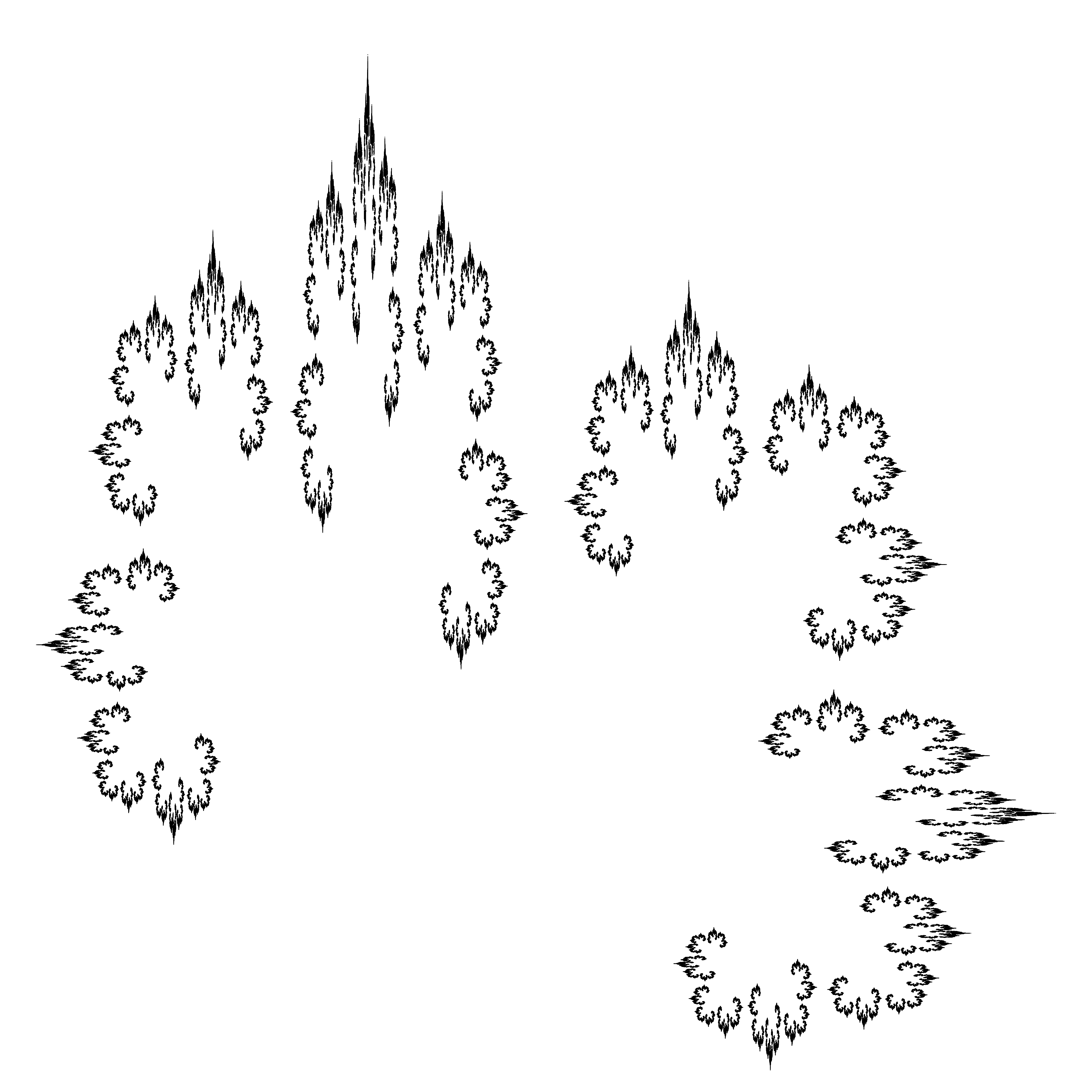}
    \caption{Using Proposition \ref{pr:irr-not-strong-irr-algebraic} it is shown in the article \cite{Mo17} that this self-affine set has Hausdorff dimension equal to the affinity dimension of its defining iterated function system. It is also proved that the affinity dimension $s$ is the unique solution to $\frac{13}{27}\left(\frac{7}{9}\right)^{s-1}+\frac{7}{9}\left(\frac{13}{27}\right)^{s-1}=1$ and is therefore equal to 1.430352022623969408121447296129996697743247230114759\ldots }
    \end{figure}

 Without loss of generality, suppose the horizontal direction corresponds to the largest Lyapunov exponent $\lambda_1(\nu)$.
  Note that $P_x\nu$ is a self-similar measure, whose generating similarities have algebraic coefficients, and such that all finite compositions have different translation parts, thanks to our assumption (2). It then follows from \cite[Theorem 1.1 and Lemma 5.10]{Ho14} that
 \[
 \dim P_x\nu = \min\left( \frac{h(\nu)}{-\lambda_1(\nu)},1 \right).
 \]
 In turn, since the Furstenberg measure is an atom at the horizontal direction, we conclude from \cite[Theorem 2.7]{Ba15} that
 \[
 \dim_H(E) \ge \dim\nu = \adim(T_{\mathtt{j}}:\jj\in\Gamma)) > \adim(T_1,\ldots,T_M)-\e \ge \dim_H(E)-\e.
\]
Since $\e$ was arbitrary, this completes the proof.
\end{proof}
We note that assumptions (2) and (3) in the above proposition are, in general, necessary. Also, the SOSC is weaker than the Rectangular Open Set Condition from \cite{Fr12}.

Similar to the results of \cite{Ho14, Ho15} that we rely on, we can also show that in fairly general parametrized families, there is equality of Hausdorff and affinity dimension outside of a co-dimension $1$ set of parameters.
\begin{proposition}  \label{pr:irr-not-strong-irr-parametrized}
Let $\mathcal{I}\subset\mathbb{R}^p$ be connected and compact. Let $T_i^{(u)}x= A_i^{(u)}x + v_i^{(u)}$, $i=1,\ldots,M$, $u\in \mathcal{I}$ be real-analytic families of invertible affine contractions, where $A_i^{(u)}$ has the form \eqref{eq:irr-not-strong-irr} for all $u\in \mathcal{I}$. Let $E^{(u)}$ be the invariant set for  $T^{(u)}=(T_1^{(u)},\ldots, T_M^{(u)})$. Assume that:
\begin{enumerate}
\item For each pair $\mathtt{i}\neq \mathtt{j}\in\Sigma_M$, neither of the maps
\[
P_x \pi^{(u)}(\mathtt{i}) - P_x \pi^{(u)}(\mathtt{j}), \, P_y \pi^{(u)}(\mathtt{i}) - P_y \pi^{(u)}(\mathtt{j}):\mathcal{I}\to \R
\]
is identically zero, where $\pi^{(u)}$ is the coding map for $T^{(u)}$.
\item The IFS $T^{(u)}$ satisfies the strong open set condition for each $u\in \mathcal{I}$.
\end{enumerate}
Then there exists a set $\mathcal{E}\subset\mathcal{I}$ of Hausdorff and packing dimension at most $p-1$ (in particular, of zero Lebesgue measure), such that
\[
\dim_H(E^{(u)}) = \adim(A_1^{(u)},\ldots, A_M^{(u)}) \quad\text{for all } u\in \mathcal{I}\setminus\mathcal{E}.
\]
\end{proposition}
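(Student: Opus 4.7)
The strategy is to mimic the argument for Proposition \ref{pr:irr-not-strong-irr-algebraic}, replacing the algebraic verification of exponential separation with a parametric exceptional-set statement for real-analytic self-similar families in dimension one, due to Hochman. Using Lemma \ref{le:affin-ge-2-OSC} we may first dispose of the parameters where $\adim(A_1^{(u)},\ldots,A_M^{(u)})=2$ (there the conclusion is automatic under the SOSC), and we may assume $\adim(A_1^{(u)},\ldots,A_M^{(u)})<2$ throughout $\mathcal{I}$.

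The key observation is that whether a product $A_{\mathtt{j}}^{(u)}$ is diagonal depends only on the parity of the number of anti-diagonal factors in $\mathtt{j}$, so it is a \emph{combinatorial} condition on $\mathtt{j}$ that is independent of $u$. Let $\mathcal{G}$ be the (countable) collection of pairs $(n,\Gamma)$ with $\Gamma\subset\{1,\ldots,M\}^n$ consisting of such words. For each $(n,\Gamma)\in\mathcal{G}$, the systems $(P_x T_{\mathtt{j}}^{(u)})_{\mathtt{j}\in\Gamma}$ and $(P_y T_{\mathtt{j}}^{(u)})_{\mathtt{j}\in\Gamma}$ are real-analytic families of self-similar IFS on $\R$. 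Hypothesis (1), applied to the periodic extensions of distinct finite words in $\Gamma^\ast$, implies that no two compositions in either family agree as affine maps identically in $u$, so the family is non-degenerate in the sense of Hochman. By the parametric exceptional set theorem of \cite{Ho15}, there exist sets $\mathcal{E}_\Gamma^x,\mathcal{E}_\Gamma^y\subset\mathcal{I}$ of Hausdorff and packing dimension at most $p-1$, outside of which the $x$- and $y$-projection IFSs satisfy exponential separation. Set
\[
\mathcal{E}:=\bigcup_{(n,\Gamma)\in\mathcal{G}}\left(\mathcal{E}_\Gamma^x\cup\mathcal{E}_\Gamma^y\right);
\]
this is still of Hausdorff and packing dimension at most $p-1$, being a countable union.

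Finally, fix $u\in\mathcal{I}\setminus\mathcal{E}$ and $\varepsilon>0$. Proposition \ref{prop:reduction-to-Bernoulli-irr-but-not-strong-irr} furnishes $n$, $\Gamma\subset\{1,\ldots,M\}^n$ with $A_{\mathtt{j}}^{(u)}$ diagonal, orientation-preserving, and strictly preserving a cone, the IFS $(T_{\mathtt{j}}^{(u)})_{\mathtt{j}\in\Gamma}$ satisfying the SSC, and the uniform Bernoulli measure $\nu$ on $\Sigma_\Gamma$ with distinct Lyapunov exponents and $\ldim(\nu)\ge \adim(A_1^{(u)},\ldots,A_M^{(u)})-\varepsilon$. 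Because $u\notin\mathcal{E}$ and $(n,\Gamma)\in\mathcal{G}$, the $x$- and $y$-projection IFSs have exponential separation, so Hochman's formula \cite[Theorem 1.1]{Ho14} gives $\dim P_x\nu=\min(h(\nu)/(-\lambda_1(\nu)),1)$ (taking the $x$-axis to be the slow direction, without loss of generality). Since the Furstenberg measure for $\nu$ is an atom on the corresponding coordinate axis, \cite[Theorem 2.7]{Ba15} applies exactly as in Proposition \ref{pr:irr-not-strong-irr-algebraic} and yields $\dim_H E^{(u)}\ge \dim\nu=\ldim(\nu)\ge \adim-\varepsilon$. Letting $\varepsilon\to 0$ and using the always-valid reverse inequality finishes the proof.

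The main obstacle is verifying that hypothesis (1) gives precisely the non-degeneracy condition required by Hochman's parametric result for each of the countably many projected systems, together with ensuring that the cited theorem is stated (or can be adapted) for multi-parameter real-analytic families with the bound $p-1$ on the exceptional dimension; a careful bookkeeping of how the periodic coding points determine the fixed points of the one-dimensional compositions is the crucial technical ingredient.
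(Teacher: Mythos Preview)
Your proposal is correct, and it takes a genuinely different organizational route from the paper's proof. The paper argues \emph{locally}: for fixed $u_0$ and $\varepsilon$, it applies Proposition~\ref{prop:reduction-to-Bernoulli-irr-but-not-strong-irr} at $u_0$ to obtain a single $(n,\Gamma)$, then uses continuity of the Lyapunov exponents and of the affinity dimension (citing \cite{FeSh14}) to find a neighbourhood $\mathcal{I}_{u_0}$ on which the same $\Gamma$ gives $\ldim(\nu,A_\Gamma^{(u')})>\adim(A^{(u')})-3\varepsilon$ and the slow direction is constant; only then is Hochman's parametric theorem \cite[Theorem 1.10]{Ho15} invoked on $\mathcal{I}_{u_0}$, and compactness of $\mathcal{I}$ is implicitly used to cover. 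Your approach is \emph{global}: you first assemble the exceptional set $\mathcal{E}$ as a countable union over all combinatorially diagonal pairs $(n,\Gamma)$ of the Hochman exceptional sets for both coordinate projections, and only afterwards, for each $u\notin\mathcal{E}$, pick the $(n,\Gamma)$ furnished by Proposition~\ref{prop:reduction-to-Bernoulli-irr-but-not-strong-irr}. This buys you a cleaner argument with no continuity or compactness step, at the cost of the (correct and essential) observation that diagonality of $A_{\mathtt{j}}$ depends only on the parity of anti-diagonal factors in $\mathtt{j}$ and hence is parameter-independent. Two minor remarks: your detour through periodic extensions is unnecessary, since distinct elements of $\Gamma^{\mathbb{N}}$ yield distinct elements of $\Sigma_M$, so hypothesis~(1) gives Hochman's non-degeneracy for the projected subsystem directly; and your reduction ``assume $\adim<2$ throughout $\mathcal{I}$'' is slightly imprecise (this set need not be all of $\mathcal{I}$), but since your argument does not actually use compactness or connectedness, it is cleaner to build $\mathcal{E}$ on all of $\mathcal{I}$ and then observe that for $u$ with $\adim=2$ the conclusion is immediate from Lemma~\ref{le:affin-ge-2-OSC}.
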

\begin{proof}
It is enough to show that, given $\e>0$ and a fixed $u\in\mathcal{I}$, there are a neighborhood $\mathcal{I}_u$ of $u\in\mathcal{I}$ and a set $\mathcal{E}_u\subset \mathcal{I}_u$ of Hausdorff and packing dimension at most $p-1$, such that
\begin{equation}  \label{eq:irr-not-strong-irr-parametrized:1}
\dim_H(E^{(u')}) > \adim(T^{(u')})-3\e \quad\text{for all } u'\in \mathcal{I}_u\setminus\mathcal{E}_u.
\end{equation}
Once fixed $\e>0$ and $u\in\mathcal{I}$, let $n$ and $\Gamma$ be as given by Proposition \ref{prop:reduction-to-Bernoulli-irr-but-not-strong-irr} for the IFS $T^{(u)}$ (this assumes that $\adim(A_1^{(u)},\ldots,A_M^{(u)})<2$; the case where the affinity dimension equals $2$ is simpler; details are left to the reader). Let $\lambda_i^{(u)}(\nu)$ be the Lyapunov exponents of $\nu$ with respect to $A_\Gamma^{(u)}=(A_{\mathtt{i}}^{(u)}:\mathtt{i}\in\Gamma)$. Since the maps $A_{\mathtt{i}}^{(u)},\ii\in\Gamma$ are diagonal (by continuity) and $\nu$ is Bernoulli, the maps $u\mapsto \lambda_i^{(u)}(\nu)$ are continuous. The map $u\mapsto \adim(A^{(u)})$ is also continuous, see \cite[Theorem 1.2]{FeSh14}. Hence, there exists a neighborhood $\mathcal{I}_u$ of $u$ in $\mathcal{I}$, such that
\begin{equation} \label{eq:irr-not-strong-irr-parametrized:2}
\ldim(\nu, A_\Gamma^{(u')}) > \ldim(\nu, A_\Gamma^{(u)})-\e > \adim(A^{(u)})-2\e > \adim(A^{(u')})-3\e
\end{equation}
for all $u'\in\mathcal{I}_u$. By making $\mathcal{I}_u$ smaller if needed, we can also assume without loss of generality that the top Lyapunov exponent  $\lambda_1^{(u')}(\nu)$ corresponds to the horizontal direction for all $u'\in\mathcal{I}_u$. As in the proof of Proposition \ref{pr:irr-not-strong-irr-algebraic}, the measures $P_x\pi^{(u')}\nu$ are self-similar. Moreover, invoking \cite[Theorem 1.10]{Ho15}, we obtain a set $\mathcal{E}_u\subset\mathcal{I}_u$ of packing (and Hausdorff) dimension at most $p-1$, such that
\[
\dim P_x\pi^{(u')}\nu = \frac{h(\nu)}{\lambda_1^{(u')}(\nu)}  \quad\text{for all }u'\in\mathcal{I}_u\setminus \mathcal{E}_u.
\]
Applying \cite[Theorem 2.7]{Ba15} as in the proof of Proposition \ref{pr:irr-not-strong-irr-algebraic}, we conclude that
\[
\dim_H(E^{(u')})\ge \dim(\pi^{(u')}\nu) = \ldim(\nu, A_\Gamma^{(u')})   \quad\text{for all }u'\in\mathcal{I}_u\setminus \mathcal{E}_u.
\]
Together with \eqref{eq:irr-not-strong-irr-parametrized:2}, this establishes \eqref{eq:irr-not-strong-irr-parametrized:1}, finishing the proof.
\end{proof}
The first assumption in the above proposition is very mild: it roughly says that the principal projections do not have overlaps ``built-in'' for all parameters.

In some cases, it may be possible to remove any separation assumptions in Propositions \ref{pr:irr-not-strong-irr-algebraic} and \ref{pr:irr-not-strong-irr-parametrized} by using the results from \cite{BaRaSi15}, but we do not pursue this.



\end{document}